\pgfplotsset{
  compat=1.13,
}
\theoremstyle{plain}
\newtheorem{thm}{Theorem}[section] % reset theorem numbering for each section
\newtheorem{rem}[thm]{Remark}
\newtheorem{crl}[thm]{Corollary}
\newtheorem{prop}[thm]{Proposition}
\newtheorem{asm}[thm]{Assumption}
\definecolor{clr1}{RGB}{27,158,119}
\definecolor{clr2}{RGB}{217,95,2}
\definecolor{clr3}{RGB}{117,112,179}
\definecolor{clr4}{RGB}{231,41,138}
\definecolor{clr5}{RGB}{102,166,30}
\definecolor{clr6}{RGB}{230,171,2}
\definecolor{clr7}{RGB}{166,118,29}
\definecolor{clr8}{RGB}{102,102,102}
\newcommand{\A}{\mathcal{H}}
\newcommand{\E}{\mathcal{E}}
\newcommand{\I}{\mathcal{I}}
\renewcommand{\P}{\mathcal{P}}
\newcommand{\R}{\mathcal{R}}
\newcommand{\T}{\mathcal{T}}
\newcommand{\HH}{H}
\newcommand{\HHH}{\bm{H}}
\newcommand{\LL}{L}
\newcommand{\LLL}{\bm{L}}
\newcommand{\pp}{p}
\newcommand{\St}{\mathrm{St}(p,\HHH)}
\newcommand{\Tan}{\mathrm{T}}
\newcommand{\Nrm}{\mathrm{N}}
\newcommand{\K}{{\mathbb K}}
\newcommand{\grad}{\mathrm{grad}}
\newcommand{\pf}{\mathrm{pf}}
\renewcommand{\sf}{\mathrm{Sf}}
\newcommand{\diag}{\mathrm{diag}}
\newcommand{\Herm}{\mathrm{Herm}}
\DeclareMathOperator{\trace}{tr}
\newcommand{\etabf}{{\bm \eta}}
\newcommand{\xibf}{{\bm \xi}}
\newcommand{\varphibf}{{\bm \varphi}}
\newcommand{\psibf}{{\bm \psi}}
\newcommand{\gbf}{\bm{g}}
\newcommand{\rbf}{\bm{r}}
\newcommand{\vbf}{\bm{v}}
\newcommand{\wbf}{\bm{w}}
\newcommand{\xbf}{\bm{x}}
\newcommand{\re}{\mbox{\rm Re}}
\newcommand{\Drm}{\mathrm{D}\,}
\newcommand{\drm}{\mathrm{d}}
\newcommand{\dx}{\mathrm{d}x}
\newcommand{\ea}[2]{{#1, \A, #2}}
\newcommand{\outerprod}[2]{\llbracket{#1},{#2}\rrbracket}
\newsavebox{\@brx}
\newcommand{\llangle}[1][]{\savebox{\@brx}{\(\m@th{#1\langle}\)}%
  \mathopen{\copy\@brx\mkern2mu\kern-0.9\wd\@brx\usebox{\@brx}}}
\newcommand{\rrangle}[1][]{\savebox{\@brx}{\(\m@th{#1\rangle}\)}%
  \mathclose{\copy\@brx\mkern2mu\kern-0.9\wd\@brx\usebox{\@brx}}}
\newcommand{\coouterprod}[2]{\llangle{#1},{#2}\rrangle}
\definecolor{myGreen2}{RGB}{114,175,30} 
\definecolor{myRed}{RGB}{180,50,50}  
\definecolor{myOrange}{RGB}{225,92,22} 
\definecolor{unia-purple}{RGB}{173, 0, 124}
\definecolor{light-gray}{rgb}{0.8889,0.8889,0.8889}
\definecolor{cpl1}{rgb}{0.8889,0.4356,0.2781}
\definecolor{cpl2}{rgb}{0.0,0.6056,0.9787}
\definecolor{cpl3}{rgb}{0.2422,0.6433,0.3044}
\definecolor{cpl4}{rgb}{0.2, 0.2, 0.2}
\newlength{\plotwidth}
\newlength{\plotheight}
\newcommand{\Field}{\mathbb C}
\pgfplotsset{%
    layers/standard/.define layer set={%
        background,axis background,axis grid,axis ticks,axis lines,axis tick labels,pre main,main,axis descriptions,axis foreground%
    }{
        grid style={/pgfplots/on layer=axis grid},%
        tick style={/pgfplots/on layer=axis ticks},%
        axis line style={/pgfplots/on layer=axis lines},%
        label style={/pgfplots/on layer=axis descriptions},%
        legend style={/pgfplots/on layer=axis descriptions},%
        title style={/pgfplots/on layer=axis descriptions},%
        colorbar style={/pgfplots/on layer=axis descriptions},%
        ticklabel style={/pgfplots/on layer=axis tick labels},%
        axis background@ style={/pgfplots/on layer=axis background},%
        3d box foreground style={/pgfplots/on layer=axis foreground},%
    },
}
\def\author@andify{%
  \nxandlist {\unskip ,\penalty-1 \space\ignorespaces}%
    {\unskip {} \@@and~}%
    {\unskip \penalty-2 \space \@@and~}%
}
\renewcommand{\andify}{%
  \nxandlist{\unskip, }{\unskip{} \@@and~}{\unskip \space \@@and~}}
\title[Energy-Adaptive Riemannian CG for Kohn-Sham]{Energy-Adaptive Riemannian Conjugate Gradient Method for Density Functional Theory}
\author[D. Peterseim]{Daniel Peterseim}
\author[J. Püschel]{Jonas Püschel}
\author[T. Stykel]{Tatjana Stykel}
\address[D. Peterseim, T. Stykel]{Institute of Mathematics \& Centre for 
Advanced Analytics and Predictive Sciences (CAAPS), University of 
Augsburg, Universit\"atsstra{\ss}e~12a, 86159 Augsburg, Germany}
\email{daniel.peterseim@uni-a.de}
\email{tatjana.stykel@uni-a.de}
\address[J. Püschel]{Institute of Mathematics, University of Augsburg, 
Universit\"atsstra{\ss}e~12a, 86159 Augsburg, Germany}
\email{jonas.pueschel@uni-a.de}
\thanks{The work of D.~Peterseim is part of a project that has received funding from the European Research Council (ERC) under the European Union's Horizon 2020 research and innovation programme (Grant agreement No.~865751 -- RandomMultiScales).}
\begin{document}

\begin{abstract}
This paper presents a novel Riemannian conjugate gradient method for the Kohn-Sham energy minimization problem in density functional theory (DFT), with a focus on non-metallic crystal systems. We introduce an energy-adaptive metric that preconditions the Kohn-Sham model, significantly enhancing optimization efficiency. Additionally, a carefully designed shift strategy and several algorithmic improvements make the implementation comparable in performance to highly optimized self-consistent field iterations. The energy-adaptive Riemannian conjugate gradient method has a sound mathematical foundation, including stability and convergence, offering a reliable and efficient alternative for DFT-based electronic structure calculations in computational chemistry.
\end{abstract}

\maketitle

{\tiny {\bf Key words:} 
Kohn-Sham model, 
nonlinear eigenvalue problem, 
Riemannian optimization, 
Stiefel manifold, 
conjugate gradient method, 
preconditioning}

{\tiny {\bf AMS subject classifications.} 
65K10, %: Numerical optimization and variational techniques
65N25, %: Numerical analysis -- PDEs, BVPs -- Eigenvalue probems
81Q10  %: Quantum theory -- Mathematical theory -- Selfadjoint op, spectral analysis
}

\section{Introduction}

The Kohn-Sham density functional theory (DFT) is a quantum chemistry method used to approximate the~ground state of a~molecular system, which is its lowest possible energy state~\cite{KohS65}. Despite the  limitations related to the inaccurate exchange-correlation approximation, its computational feasibility makes it successful, for example, in the field of condensed matter physics and solid-state materials \cite{Hasnip2014, Lev20}. For a molecular system with $\pp$ electrons, we consider the Kohn-Sham energy functional
\begin{equation}\label{eq:eks}
\begin{aligned}
\E(\varphibf) 
  = &\frac{1}{2} \int_\Omega  \sum\limits_{j=1}^\pp\|\nabla \varphi_j(x)\|^2\, \drm x
    + \int_\Omega \vartheta_{\rm ion}(x)\rho(\varphibf)(x)\, \drm x 
    \\&
    + \frac{1}{2} \int_\Omega\int_\Omega \frac{\rho(\varphibf)(x)\; \rho(\varphibf)(y)}{\|x-y\|} \, \drm y\,\drm x
    + \E_{\rm xc}(\rho(\varphibf))    
\end{aligned}
\end{equation}
for $\varphibf = (\varphi_1, \dots, \varphi_\pp)$ with orbitals $\varphi_j$
defined on a~spatial domain $\Omega \subseteq \mathbb R^3$ and the electron charge density $\rho(\varphibf) = |\varphi_1|^2+\ldots+|\varphi_\pp|^2$. For simplicity, we limit ourselves to orbitals \mbox{$\varphi_j \colon \Omega \to \Field$} without spin. The first term in \eqref{eq:eks} describes the total kinetic energy of the orbitals. The second term with the ionic potential $\vartheta_{\rm ion}$ is the ionic potential energy that characterizes the interaction of electrons with nuclei. The third term is the Hartree term, which can be interpreted as the Coulomb energy of the electrostatic average interaction. Finally, $\E_{\rm xc}$ models the exchange-correlation energy that balances the self-interaction included in the Hartree term; see \cite{GTH96, KohS65, PBE96, Tou23} for various choices.

To approximate a~ground state of a molecular system, we aim to compute the minimizer of the Kohn-Sham energy functional $\,\E$ over a suitable function space of orbitals subject to the orthonormality constraints
\begin{equation}\label{eq:orthonorm}
    \langle \varphi_i, \varphi_j \rangle_{L^2(\Omega, \Field)} = \delta_{ij},\qquad i,j = 1, \dots, \pp.
\end{equation}
These constraints ensure that the orbitals are pairwise independent and that each electron's probability density $|\varphi_j|^2$ integrates to $1$. Mathematically speaking, the energy is minimized over a Riemannian manifold known as the Stiefel manifold. This minimization problem can also be formulated on the Grassmann manifold \cite{AltPS24,SchRNB09}. The existence of minimal solutions has been studied in~\cite{Anantharaman2009}. 

The Kohn-Sham ground state is a solution of the nonlinear eigenvalue problem (NLEVP) 
\begin{equation}\label{eq:evphk}
    \A_{\varphibf}\, \varphibf = \varphibf\, \Lambda
\end{equation}
for the Kohn-Sham Hamiltonian 
\begin{equation}\label{eq:hks}
    \A_{\varphibf}\, \vbf = -\Delta \vbf + 2\vartheta_{\rm ion} \vbf + 2 \vartheta_H(\rho(\varphibf)) \vbf + 2 \vartheta_{\rm xc}(\rho(\varphibf))\vbf, 
\end{equation}
generated by the density $\rho(\varphibf)$. Here, the Laplacian $\Delta \vbf = (\Delta v_1 ,\dots, \Delta v_\pp)$ is applied com\-po\-nent-wise, $\vartheta_H(\rho) = \left(\rho \star \|\cdot\|^{-1}\right)$ is the Hartree potential, and $\vartheta_{\rm  xc}(\rho) = \frac{{\rm d} \E_{\rm xc}(\rho)}{{\rm d}\rho}$ is the exchange correlation potential. The entries of the Hermitian matrix $\Lambda$ are the Lagrange multipliers corresponding to the constraints~\eqref{eq:orthonorm}. 

In electronic structure calculations, the most popular numerical method for solving the Kohn-Sham problem is the self-consistent field (SCF) method~\cite{Roothaan1951}. This approach is based on the eigenvalue formulation and uses a fixed-point method to approximate the NLEVP~\eqref{eq:evphk} by a sequence of linear eigenvalue problems. Numerous performance-enhancing modifications, including preconditioning and acceleration strategies (e.g., \cite{Cancs2000, Kresse1996, Marks2008}), have been introduced, establishing SCF as the method of choice.

In recent years, however, direct minimization approaches for the energy functional~\eqref{eq:eks} subject to the orthonormality constraints~\eqref{eq:orthonorm} have attracted increasing attention. These methods are typically formulated as Riemannian optimization problems on the Stiefel and Grassmann manifolds. Originally introduced for finite-dimensional discretizations of the Kohn-Sham model in \cite{EAS99}, they have since been extended to the infinite-dimensional setting in \cite{AltPS22,AltPS24,SchRNB09}.

Within the class of Riemannian optimization methods, the Riemannian conjugate gradient (RCG) method is typically a prime choice. For the discretized Kohn-Sham problem, such methods have been presented in \cite{DaiLZZ17,EAS99,LuoWR24,Oviedo2018}, but as illustrated e.g.~in \cite{LuoWR24}, preconditioning is essential to maintain competitiveness with SCF as the discretization resolution increases. This observation is consistent with \cite{SchRNB09}, which demonstrates that preconditioning is strictly necessary for an~optimization method to be well defined in the infinite-dimensional case, that is, in the asymptotic limit as the spatial discretization becomes increasingly refined. 

Generic preconditioners, such as the one used in \cite{DdGYZ23} for metallic systems, offer a basic improvement in convergence, but do not fully exploit the inherent structure of the Kohn-Sham problem. In particular, they fail to adapt to the local energy landscape. In contrast, we propose a problem-adaptive preconditioning strategy that employs a variable Riemannian metric tuned to the energy. This energy-adaptive approach more accurately captures the local geometry of the energy functional, ensuring robust and efficient convergence even in the infinite-dimensional setting. Building on earlier work on ground state calculations for Bose-Einstein condensates using the Gross-Pitaevskii model~\cite{HenP20}, such an energy-adaptive preconditioner has been successfully implemented within a simple Riemannian gradient descent (RGD) method for the Kohn-Sham problem in \cite{AltPS22}, leading to the energy-adaptive Riemannian gradient descent (EARGD) method.

In this paper, we build upon EARGD and introduce an energy-adaptive Riemannian conjugate gradient (EARCG) scheme designed to further enhance practical performance. Moreover, we introduce several key improvements to the energy-adaptive RGD and RCG frameworks tailored for the Kohn–Sham energy minimization problem. Our contributions include a~shifting strategy to construct the energy-adaptive Riemannian metric from the Hamiltonian, a preconditioned block Krylov subspace method to calculate the energy-adaptive Riemannian gradient, a highly efficient step size control procedure, and an~adaptation of hybrid CG parameters. By integrating the Riemannian conjugate gradient method with an energy-adaptive metric for preconditioning and incorporating these algorithmic enhancements, the resulting EARCG method achieves high performance, making it competitive with state-of-the-art SCF-based schemes for periodic crystal semiconductors and insulators. Although our method is broadly applicable to related constrained energy minimization problems \cite{AltHPS24,BaoC13,BaoC18,TiaCWW20,VidNLC24}, in this work, we focus exclusively on the Kohn–Sham minimization problem.

The remaining parts of this paper are organized as follows. In Section~\ref{sec:ks}, we recall the Kohn-Sham model and discuss the geometry of the infinite-dimensional Stiefel manifold. The RCG method and all the necessary ingredients are introduced in Section~\ref{sec:rcg}. We construct the energy-adaptive metric and the respective EARCG scheme in Section~\ref{sec:earcg}. 
The aspects of discretization and (inexact) computation of the energy-adaptive Riemannian gradient as well as numerical experiments are discussed in Section~\ref{sec:experiments}. Using representative models in \texttt{DFTK.jl}~\cite{DFTKjcon}, we illustrate the competitiveness of EARCG with state-of-the-art SCF-based methods. Concluding remarks are collected in Section~\ref{sec:concl}.

\textbf{Notation.} For $p \in \mathbb N$, the set of $\pp\times \pp$ Hermitian matrices is denoted by $\Herm(\pp)$. The $\pp\times \pp$ identity and zero matrices are denoted by $I_p$ and $0_p$, respectively. Furthermore, $\trace M$ denotes the trace of $M\in\Field^{\pp\times \pp}$. For a complex number $z$, $\re(z)$ denotes the real part of $z$, and for $V\in\Field^{n\times p}$, $V^*$ denotes the conjugate transpose of $V$.

\section{Kohn-Sham energy minimization problem} \label{sec:ks}

In this section, we introduce the Kohn-Sham energy minimization problem constrained to the infinite-dimensional (complex) Stiefel manifold and briefly discuss its connection to the nonlinear eigenvalue problem. We also review the geometric structure of the Stiefel manifold, essential for constructing gradient-based Riemannian optimization methods. 

\subsection{Problem setting}
Let $\Omega\subset\mathbb{R}^3$ be a bounded convex Lipschitz domain. For $\K \in \left\{ \mathbb R, \mathbb C\right\}$, we consider a~closed subspace $\HH_\K  \subset H^1(\Omega, \Field)$ which is assumed to be dense in the Lebesgue space \mbox{$L_\K \coloneqq L^2(\Omega, \Field)$}. Here, the index $\K$ indicates whether we understand these spaces as real or complex Hilbert spaces. The complex inner products are defined as
\[
\langle v , w \rangle_{\LL_{\mathbb C}} 
    = \int_\Omega \overline{v}\,w \,\dx, \qquad\qquad 
\langle v, w \rangle_{\HH_{\mathbb C}}
    = \int_\Omega (\nabla v)^\ast \nabla w + \overline{v}\,w \,\dx,
\] 
and the real inner products are given by
\[
\langle v, w \rangle_{\LL_{\mathbb R}} = \re \big(\langle v, w \rangle_{\LL_{\mathbb C}}\big),\qquad\enskip \langle v, w \rangle_{\HH_{\mathbb R}} = \re \big( \langle v, w \rangle_{\HH_{\mathbb C}}\big). \qquad\quad
\]
Note that the real and complex inner products induce identical norms, and hence the corresponding Banach spaces coincide. We assume that $\HH_{\K} \subset \LL_{\K}\subset \HH_{\K}'$ forms a~Gelfand triple, where $\HH'_{\K}$ denotes the dual space of~$\HH_\K$ with respect to the $\LL_\K$-inner product. Once again, we have $\HH'_{\mathbb R} = \HH'_{\mathbb C}$ as Banach spaces, i.e., they contain the same elements and the norms are identical. For $p\in \mathbb{N}$, we consider the real Hilbert spaces $\HHH=\HH_{\mathbb R}^\pp$ and $\LLL =\LL_{\mathbb R}^\pp$ of $\pp$-frames. Since the Kohn-Sham energy functional $\,\E$ in \eqref{eq:eks} is real-valued, it is natural to consider these spaces as real Hilbert spaces.

Following~\cite{AltPS22}, for $\vbf=(v_1,\ldots,v_\pp), \wbf = (w_1,\ldots,w_p)\in \HHH$, we define the (complex) outer product on the pivot space $\LLL$ as
\[
\outerprod{\vbf}{\wbf} 
    = \big[\langle v_i, w_j\rangle_{\LL_{\mathbb C}}\big]_{i,j = 1}^{\pp,\pp} \in \Field^{\pp \times \pp}.
\]
It is sesquilinear, that is, conjugate-linear in the first argument and linear in the second argument, and hence $\mathbb R$-bilinear. Moreover, we have $\outerprod{\wbf}{\vbf} = \outerprod{\vbf}{\wbf}^\ast$. It is also compatible with matrix multiplication in the sense that $\outerprod{\vbf \,M}{\wbf} = M^\ast \outerprod{\vbf}{\wbf}$ and $\outerprod{\vbf}{\wbf\,M} = \outerprod{\vbf}{\wbf} M$ for all $M \in \Field^{\pp\times \pp}$. Furthermore, for $\vbf'\in\HHH'=(\HH'_{\mathbb R})^\pp$ and $\wbf\in\HHH$, we define
\[
\coouterprod{\vbf'}{\wbf} = \big[\langle v_i', w_j\rangle_{\HH_{\mathbb C}' \times \HH_{\mathbb C}^{}}\big]_{i,j = 1}^{\pp,\pp} \in \Field^{\pp \times \pp},
\]
to which we will refer to as the co-outer product. It represents the mutual (complex) duality pairing $\langle \cdot, \cdot\rangle_{\HH'_{\Field} \times \HH_{\Field}^{}}$  among all components. 
We note that the co-outer product satisfies the same matrix multiplication properties as those described for the outer product and that the outer and co-outer products coincide for $\vbf, \wbf \in \HHH$, i.e., $\outerprod{\vbf}{\wbf}  = \coouterprod{\vbf}{\wbf}$.

Define the (real) inner products on $\LLL$ and $\HHH$ by
\[
\langle \vbf, \wbf \rangle_{\LLL} 
    = \sum_{j=1}^p \langle v_j, w_j \rangle_{\LL_{\mathbb R}} 
    = \re \big( \trace \outerprod{\vbf}{\wbf} \big)
\quad\text{and}\quad
\langle \vbf, \wbf \rangle_{\HHH} 
= \sum_{j=1}^\pp \langle v_j, w_j \rangle_{\HH_{\mathbb R}},
\]
respectively, with induced norms $\|\vbf\|_{\LLL}=\sqrt{\langle \vbf, \vbf \rangle_{\LLL}}$ and $\|\vbf\|_{\HHH}=\sqrt{\langle \vbf, \vbf \rangle_{\HHH}}$.
For $\vbf' \in \HHH'$ and $\wbf \in \HHH$, the canonical (real) duality pairing is defined by
\[
\langle \vbf', \wbf \rangle
    = \sum_{j=1}^p \langle v_j', w_j \rangle_{\HH_{\mathbb R}' \times \HH_{\mathbb R}^{}}
    = \re \big( \trace \coouterprod{\vbf'}{\wbf} \big).
\]

For a $\pp$-electron system described by $\varphibf\in \HHH$, we consider the Kohn-Sham energy functional~$\,\E$ given in~\eqref{eq:eks}. The orthonormality constraints~\eqref{eq:orthonorm} induce the \emph{infinite-dimensional} (\emph{complex}) \emph{Stiefel manifold of index $\pp$}
\begin{equation}\label{eq:stiefel}
\St = \big\{\varphibf \in \HHH \enskip :\enskip \outerprod{\varphibf}{\varphibf}=I_\pp\big\}.
\end{equation}
A~{\emph{ground state}} $\varphibf_\star\in\HHH$ of the electron configuration is a global minimizer of the Kohn-Sham energy minimization problem 
\begin{equation}\label{eq:prob_e_min}
\min_{\varphibf\in \St} \E(\varphibf).
\end{equation}

Given the linear Hermitian Hamiltonian operator $\A_\varphibf:\HHH \to\HHH'$ from \eqref{eq:hks}, the real Fr\'echet derivative of $\E$ at $\varphibf \in \HHH$ in direction $\wbf\in\HHH$ is
\begin{align}\label{eq:derivative}
     \Drm \E (\varphibf)[\wbf] = \langle \A_{\varphibf}\,\varphibf,\wbf\rangle.
\end{align}
As a critical point of \eqref{eq:prob_e_min}, a~ground state $\varphibf_\star \in\St$ solves the NLEVP \eqref{eq:evphk}
with $\Lambda \in \Herm(\pp)$ being the Lagrange multiplier of the orthonormality constraints \mbox{$\outerprod{\varphibf}{\varphibf}\!=\!I_\pp$}. Although we view the feasibility set of \eqref{eq:prob_e_min} as a~real Hilbert manifold, the Lagrange multiplier may have complex entries. Note further that for any critical point $\varphibf\in\St$ satisfying \eqref{eq:evphk}, the associated Lagrange multiplier is given by $\Lambda = \coouterprod{\A_{\varphibf}\,\varphibf}{\varphibf}$. 

Since $\A_\varphibf$ acts component-wise on $\pp$-frames, it exhibits right associativity in the sense that for all $M\in\Field^{\pp\times \pp}$ and $\vbf\in\HHH$, $\A_\varphibf(\vbf M)=\A_\varphibf(\vbf)M$. Furthermore, both the Kohn-Sham energy functional~$\,\E$ and the corresponding Hamiltonian $\A_\varphibf$ are invariant under unitary transformations of the $\pp$-frame~$\varphibf$, i.e., for any unitary $U\in\Field^{\pp\times \pp}$, $\E(\varphibf U)=\E(\varphibf)$ and \mbox{$\A_{\varphibf U}=\A_\varphibf$}. This invariance implies that, upon diagonalizing the Lagrange multiplier $U^\ast\Lambda U=\diag(\lambda_1,\ldots,\lambda_\pp)$ with a~unitary $U$, the components of $\varphibf U$ are \emph{eigenfunctions} corresponding to the eigenvalues $\lambda_1,\ldots,\lambda_\pp$ of the component-wise action of $\A_{\varphibf}$. We say that a~ground state $\varphibf_\star$ of $\E$ satisfies the \emph{Aufbau principle}, if the eigenvalues of the associated Lagrange multiplier $\Lambda_\star=\coouterprod{\A_{\varphibf_\star}\varphibf_\star}{\varphibf_\star}$ are the $\pp$ smallest eigenvalues of the component-wise action of $\A_{\varphibf}$. In this case, the difference $\lambda_{p+1} - \lambda_p \ge 0$ is referred to as the \emph{highest occupied molecular orbital and lowest unoccupied molecular orbital} (HOMO-LUMO) gap.

\subsection{The infinite-dimensional Stiefel manifold}
The Stiefel manifold~$\St$ defined in \eqref{eq:stiefel} is a~smooth closed embedded submanifold of the Hilbert space $\HHH$ \cite[Prop.~3.1]{AltPS22} and thus inherits its topology from the ambient space $\HHH$. For any $\varphibf \in \St$, the \emph{tangent space} of $\St$ at $\varphibf$ is given by
\begin{equation}\label{eq:TanSt}
        \Tan_\varphibf\,\St =  \big\{ \etabf \in \HHH \enskip : \enskip \outerprod{\varphibf}{\etabf} + \outerprod{\etabf}{\varphibf}  = 0_p\big\}.
\end{equation}
We equip $\Tan_\varphibf\,\St$ with an~inner product $g_\varphibf(\cdot, \cdot)$. 
If this inner product varies smoothly on~$\varphibf$, 
then it induces a~\emph{Riemannian metric} that turns $\St$ into a~Riemannian Hilbert manifold.  

 For $\varphibf \in \St$, we define the \emph{Riemannian gradient}
 of the energy functional~$\,\E$ at $\varphibf$ with respect to the metric $g_\varphibf$ as the unique element $\grad\, \E(\varphibf) \in \Tan_\varphibf\, \St$ which satisfies the condition
 \begin{equation}\label{eq:defGrad}
     g_\varphibf(\grad\, \E(\varphibf), \etabf) = \Drm \E(\varphibf)[\etabf] \qquad \text{for all}\;\etabf\in \Tan_\varphibf\,\St.
 \end{equation}

For a~strong metric $g_\varphibf$ in the sense that $\Tan_\varphibf\, \St$ is complete with respect to the induced norm, the existence of the Riemannian gradient is guaranteed by the Riesz representation theorem. On the contrary, for a~weak metric, such as the $\LLL$-metric defined~by 
\[
 g_{\LLL}(\etabf,\xibf)=\langle\etabf,\xibf\rangle_{\LLL} \qquad \text{for all } \etabf,\xibf\in\Tan_\varphibf\,\St,
\]
the Riemannian gradient may not exist for all $\varphibf \in \St$.
Due to the Gelfand triple structure, we can, however, provide the corresponding element in the \emph{cotangent space} at $\varphibf \in \St$ given by
\[
\Tan_\varphibf^\ast\, \St = \big\{\vbf' \in \HHH' \enskip :\enskip\coouterprod{\vbf'}{\varphibf} + \coouterprod{\varphibf}{\vbf'} = 0_p\big\}.
\]
Here, we used the slight abuse of notation $\coouterprod{\varphibf}{\vbf'} \coloneqq \coouterprod{\vbf'}{\varphibf}^*$ to emphasize the similarity to the tangent space in~\eqref{eq:TanSt}.

Given $\varphibf\in\St$, we introduce the Hermitian matrix 
\begin{equation}\label{eq:Lambda}
    \Lambda_\varphibf = \coouterprod{\A_\varphibf\, \varphibf}{\varphibf},
\end{equation}
which can be interpreted as a matrix-valued Rayleigh quotient of $\varphibf$ with respect to $\A_\varphibf$. For critical points, it coincides with the Lagrange multiplier. Furthermore, we define the \emph{residual}
\[
    \mathrm{res}\,\E(\varphibf) = \A_\varphibf\, \varphibf - \varphibf\, \Lambda_\varphibf
\]
for the NLEVP~\eqref{eq:evphk} at $\varphibf$. It is easy to verify that the residual lies in the cotangent space $\Tan_\varphibf^\ast\, \St$.
Since for all $\etabf \in \Tan_\varphibf\, \St$, the matrix $\outerprod{\varphibf}{\etabf}$ is skew-Hermitian, we obtain with~\eqref{eq:derivative} that
\begin{equation}\label{eq:resDE}
\langle \mathrm{res}\,\E(\varphibf), \etabf \rangle 
    = \Drm\E(\varphibf)[\etabf]\qquad \text{for all}\;\etabf\in\Tan_\varphibf\,\St.
\end{equation}
This shows that $\mathrm{res}\,\E(\varphibf)$ is the element of $\Tan_\varphibf^\ast\, \St$  which represents the derivative of $\,\E$ at $\varphibf$. Under the regularity condition $\A_\varphibf\, \varphibf \in \HHH$, we have \mbox{$ \mathrm{res}\,\E(\varphibf) \in \Tan_\varphibf\, \St$}. Thus, $\mathrm{res}\,\E(\varphibf)$ can be understood as the Riemannian gradient with respect to the weak metric~$\langle \cdot, \cdot\rangle_{\LLL}$. Note that the combination of \eqref{eq:defGrad} and \eqref{eq:resDE} gives  
\begin{equation}\label{eq:calc_metric_g}
        g_{\varphibf}(\grad\,\E(\varphibf), \etabf) 
        = \Drm \E (\varphibf)[\etabf] 
        = \langle \mathrm{res}\,\E(\varphibf), \etabf \rangle_{} \quad\text{for all }\etabf \in \Tan_{\varphibf}\, \St,
    \end{equation}
which links the Riemannian gradient $\grad\,\E(\varphibf)$ with respect to a general metric $g_\varphibf$ to the residual $\mathrm{res}\,\E(\varphibf)$. 

\section{Riemannian conjugate gradient method} \label{sec:rcg}
In this section, we introduce the RCG method for minimizing the Kohn-Sham energy functional $\,\E$ on the infinite-dimensional Stiefel manifold $\St$ along with some underlying geometric concepts and algorithmic acceleration techniques. When designing and analyzing Riemannian optimization methods for partial differential equations, it is both natural and instructive to work in an infinite-dimensional framework. In this setting, the stability and convergence properties of optimization schemes are asymptotically independent of the spatial discretization resolution. In contrast, conventional convergence proofs in Riemannian optimization for discrete models often do not explicitly consider the dimensionality of the problem, which can limit their applicability as the number of degrees of freedom increases with finer spatial discretization. 

\begin{algorithm2e}[tbh]
\SetKwInOut{Input}{input}
\SetAlgoLined
\Input{initial guess $\varphibf^{(0)} \in \St$, metric $g_\varphibf$, retraction $\R_\varphibf$, vector transport  $\T^\R$
}
\BlankLine

$\rbf^{(0)} = \A_{\varphibf^{(0)}} \varphibf^{(0)} - \varphibf^{(0)}\coouterprod{\A_{\varphibf^{(0)}} \varphibf^{(0)}}{\varphibf^{(0)}}$ \tcc*{residual}
$\etabf^{(0)} = - \grad\, \E(\varphibf^{(0)})$ \tcc*{search direction}
 \For{$m=0, 1, 2, \dots$ until convergence}{
    compute the step size $\tau^{(m)}$\;
    $\varphibf^{(m+1)} = \R_{\varphibf^{(m)}}(\tau^{(m)} \etabf^{(m)})$\; 
    $\rbf^{(m+1)} = \A_{\varphibf^{(m+1)}} \varphibf^{(m+1)} - \varphibf^{(m+1)}\coouterprod{\A_{\varphibf^{(m+1)}}\varphibf^{(m+1)}}{\varphibf^{(m+1)}}$\;
    compute the CG parameter $\beta^{(m+1)}$\; 
    $\etabf^{(m+1)} = - \grad\, \E(\varphibf^{(m+1)}) + \beta^{(m+1)} \T_{\tau^{(m)} \etabf^{(m)}}^\R(\etabf^{(m)})$\; 
}
 \KwRet{$\varphibf^{(m+1)}$}
 \caption{RCG method for the Kohn-Sham energy minimization problem}
     \label{alg:rcg}
\end{algorithm2e}

The RCG method is a~straightforward modification of the classical nonlinear conjugate gradient method, e.g., \cite[Sect.~5.2]{Nocedal06}, and has been widely used for solving different optimization problems on (functional) manifolds \cite{AliHYY24,DaiLZZ17,EAS99,Oviedo2018,RiWi2012}. In addition to a~Riemannian metric, important ingredients of this method are
a~retraction, which maps tangent vectors to points on the manifold, and a~vector transport, which transfers vectors from one tangent space to another. In particular, for $\varphibf \in \St$ and $\etabf,\xibf\in \Tan_\varphibf\, \St$, a retraction $\R_\varphibf(\xibf)$ provides a~way to move from $\varphibf$ in a~direction $\xibf$ while staying on the Stiefel manifold $\St$, and a~vector transport $\T^\R_\xibf(\etabf)$ translates $\etabf$ to a~tangent vector at $\R_\varphibf(\xibf)$. 

In Algorithm~\ref{alg:rcg}, we present a~general formulation of the RCG method for the Kohn-Sham energy minimization problem~\eqref{eq:prob_e_min}. Notably, Algorithm~\ref{alg:rcg} computes residuals explicitly, as they allow the efficient evaluation of quantities of the form $g_\varphibf(\grad\,\E(\varphibf), \cdot)$ using \eqref{eq:calc_metric_g} which is often required in the adaptive step size control and for the computation of CG parameters. 

Different metrics, retractions, vector transports, step size strategies, and CG parameters in Algorithm~\ref{alg:rcg} will lead to different RCG schemes.  In the following, we discuss the particular choices for these algorithmic ingredients in more detail.

\subsection{Retraction}

A~retraction on the Stiefel manifold $\St$
can be constructed by using a~polar decomposition, defined for a~$\pp$-frame $\psibf\in\HHH$~by   
\begin{equation}\label{eq:polar}
\psibf = \pf(\psibf) \sf(\psibf)
\end{equation}
with the polar factor $\pf(\psibf)\in\St$ and the positive semidefinite matrix $\sf(\psibf)\in\Herm(\pp)$. If $\psibf$ has linearly independent components or, equivalently,  $\outerprod{\psibf}{\psibf}$ is positive definite, then the factors of the polar decomposition \eqref{eq:polar} can be computed from the eigenvalue decomposition $\outerprod{\psibf}{\psibf} = V D V^\ast$ with a~unitary matrix $V \in\Field^{\pp\times \pp}$ and a~diagonal matrix $D\in\mathbb{R}^{\pp\times \pp}$~as 
    \[
    \mathrm{pf}(\psibf) = \psibf \big(V D^{-1/2} V^\ast   \big) \qquad \text{and} \qquad \sf(\psibf) = V D^{1/2} V^\ast.
    \]

For $\varphibf\in \St$ and $\etabf\in\Tan_\varphibf\,\St$, we define the \emph{polar retraction}
\begin{equation}\label{eq:def_polar_retraction}
        \R_\varphibf^{\rm pol}(\etabf) = \mathrm{pf}(\varphibf + \etabf).
\end{equation}
Note that this retraction is well defined \cite{AltPS22} and of second order with respect to the $\LLL$-metric~\cite{AltPS24}. 

An~alternative retraction on $\St$ is based on the qR decomposition computed using the Gram-Schmidt orthonormalization procedure; see \cite{AltPS22} for details. Since our numerical experiments indicate a slightly better practical performance for the polar retraction than for the one based on the qR~decomposition, we will no longer consider the latter. 
    
\subsection{Vector transport} 
A~standard way to construct a~vector transport associated with a~retraction~$\R_\varphibf$ is by its differentiation, resulting in a~differentiated retraction vector transport 
\[
\T^{\R}_\xibf(\etabf)= \Drm\!\R_{\varphibf}(\xibf)[\etabf].
\]
To derive a~computable expression for such a~vector transport associated with the polar retraction $\R^{\rm pol}_\varphibf$ in \eqref{eq:def_polar_retraction}, we need to determine the derivative of the polar factor mapping $\pf$.

\begin{prop}\label{prop:proto_ortho_ret}
Let $\HHH_\star$ be the set of all $\pp$-frames from $\HHH$ with linearly independent components. For $\psibf \in \HHH_\star$, consider the polar decomposition \eqref{eq:polar}. Then \mbox{$\pf : \HHH_\star \to \St$} and $\sf: \HHH_\star \to \Herm(\pp)$ are both real Fr\'echet differentiable, and for all $\vbf \in \HHH$, the directional derivative of $\,\pf$ at $\psibf$ along $\vbf$ is given by
    \begin{equation}\label{eq:Df}
    \Drm\!\pf(\psibf) [\vbf] = \big(\vbf - \pf(\psibf)\, \Drm\! \sf(\psibf)[\vbf]\big) \sf(\psibf)^{-1},
    \end{equation}
    where the derivative $\Drm\! \sf(\psibf)[\vbf] \in \Herm(\pp)$ is the unique solution to the Lyapunov equation \begin{align}
        \sf(\psibf)\, X + X\, \sf(\psibf) = \outerprod{\psibf}{\vbf} + \outerprod{\vbf}{\psibf}. \label{eq:LyapDf}
    \end{align}
 \end{prop}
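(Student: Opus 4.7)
The plan is to reduce everything to a single scalar identity $\sf(\psibf)^2 = \outerprod{\psibf}{\psibf}$, and then differentiate. Since for $\psibf \in \HHH_\star$ we have the polar decomposition $\psibf = \pf(\psibf)\sf(\psibf)$ with $\pf(\psibf)\in\St$, the compatibility of the outer product with matrix multiplication yields
\[
    \outerprod{\psibf}{\psibf} = \sf(\psibf)^\ast \outerprod{\pf(\psibf)}{\pf(\psibf)}\sf(\psibf) = \sf(\psibf)^2,
\]
using $\sf(\psibf)\in\Herm(\pp)$ and $\outerprod{\pf(\psibf)}{\pf(\psibf)} = I_\pp$. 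This identifies $\sf(\psibf)$ as the (unique positive definite) matrix square root of $\outerprod{\psibf}{\psibf}$.

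First I would establish differentiability of $\sf$. The map $\psibf\mapsto \outerprod{\psibf}{\psibf}$ from $\HHH$ to $\Herm(\pp)$ is a continuous real-bilinear form, hence real Fr\'echet smooth with derivative $\vbf \mapsto \outerprod{\psibf}{\vbf} + \outerprod{\vbf}{\psibf}$. On the open cone of positive definite Hermitian matrices, the square-root map $A \mapsto A^{1/2}$ is real-analytic (e.g.\ via the Dunford integral or the implicit function theorem applied to $X^2 = A$). Composing these two smooth maps shows that $\sf : \HHH_\star \to \Herm(\pp)$ is real Fr\'echet differentiable. Differentiating $\sf(\psibf)^2 = \outerprod{\psibf}{\psibf}$ in direction $\vbf\in\HHH$ and writing $X = \Drm\!\sf(\psibf)[\vbf]$ gives exactly the Lyapunov equation~\eqref{eq:LyapDf}. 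Unique solvability of this equation follows from positive definiteness of $\sf(\psibf)$: the spectrum of the linear operator $X\mapsto \sf(\psibf)X + X\sf(\psibf)$ on $\Field^{\pp\times\pp}$ consists of pairwise sums of eigenvalues of $\sf(\psibf)$, which are strictly positive, so the operator is invertible; and since the right-hand side is Hermitian, the unique solution lies in $\Herm(\pp)$.

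For the polar factor $\pf$, I would use that $\sf(\psibf)$ is invertible on $\HHH_\star$, so $\pf(\psibf) = \psibf\,\sf(\psibf)^{-1}$. Matrix inversion is smooth on the set of invertible matrices, and multiplication of a $\pp$-frame in $\HHH$ by a $\pp\times\pp$ matrix is bilinear and continuous, hence smooth. Thus $\pf : \HHH_\star \to \St$ is real Fr\'echet differentiable. To obtain the formula~\eqref{eq:Df}, apply the product rule to $\psibf = \pf(\psibf)\,\sf(\psibf)$ in direction $\vbf$:
\[
    \vbf = \Drm\!\pf(\psibf)[\vbf]\,\sf(\psibf) + \pf(\psibf)\,\Drm\!\sf(\psibf)[\vbf],
\]
and solve for $\Drm\!\pf(\psibf)[\vbf]$ by right-multiplying with $\sf(\psibf)^{-1}$.

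The only non-routine step is the smoothness of the matrix square root on positive definite Hermitian matrices; this is a standard result, but one should remark that on $\HHH_\star$, positive definiteness of $\outerprod{\psibf}{\psibf}$ is open and preserved under small perturbations, so the composition is well defined in a neighbourhood of any $\psibf\in\HHH_\star$. Everything else reduces to the chain rule and the invertibility of the Lyapunov operator from positivity of $\sf(\psibf)$.
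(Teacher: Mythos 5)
Your proposal is correct and its computational core is identical to the paper's proof: both differentiate the identity $\sf(\psibf)^2=\outerprod{\psibf}{\psibf}$ to obtain the Lyapunov equation, differentiate the polar decomposition $\psibf=\pf(\psibf)\,\sf(\psibf)$ and solve for $\Drm\!\pf(\psibf)[\vbf]$, and deduce unique solvability of \eqref{eq:LyapDf} from the positive definiteness of $\sf(\psibf)$. The one place you go beyond the paper is in justifying the Fr\'echet differentiability itself \emph{before} differentiating the defining identities --- via smoothness of the matrix square root on positive definite matrices for $\sf$, and via $\pf(\psibf)=\psibf\,\sf(\psibf)^{-1}$ for $\pf$ --- whereas the paper's proof differentiates both sides of \eqref{eq:polar} without first establishing that the factors are differentiable; your version closes that small logical gap.
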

    
\begin{proof}
    Let $\psibf \in \HHH_\star$. 
    Calculating the directional derivatives of both sides of \eqref{eq:polar} at $\psibf$ along $\vbf\in\HHH$,  we obtain that
    \begin{align*}
        \vbf 
        & = \Drm \!\pf(\psibf)[\vbf]\, \sf(\psibf) + \pf(\psibf) \,\Drm\! \sf(\psibf)[\vbf]. 
    \end{align*}
    Resolving this equation for $\Drm\!\pf(\psibf)[\vbf]$ yields \eqref{eq:Df}.
    As $\Herm(\pp)$ is a~vector space, we have $\Drm\! \sf(\psibf)[\vbf] \in \Herm(\pp)$. Further, we calculate
    \begin{align*}
    \outerprod{\psibf}{\psibf} 
         = \outerprod{\pf(\psibf)\,\sf(\psibf)}{\pf(\psibf)\,\sf(\psibf)} 
        = \sf(\psibf) \outerprod{\pf(\psibf)}{\pf(\psibf)} \sf(\psibf)
        = \sf(\psibf) \sf(\psibf).  
    \end{align*}
    Differentiating both sides of this equation in direction $\vbf$ implies that 
    \[
       \outerprod{\psibf}{\vbf} + \outerprod{\vbf}{\psibf} =
       \sf(\psibf)\, \Drm\!\sf(\psibf)[\vbf] + \Drm\! \sf(\psibf)[\vbf]\,\sf(\psibf).
    \]
    Thus, $\Drm\!\sf(\psibf)[\vbf]$ satisfies \eqref{eq:LyapDf}. Since the matrix $\sf(\psibf)$ in the polar decomposition \eqref{eq:polar} is Hermitian and positive definite, the Lyapunov equation \eqref{eq:LyapDf} admits a~unique solution. 
\end{proof}

Using this proposition, we explicitly construct the differentiated polar retraction vector transport defined as $\T^{\rm pol}_\xibf(\etabf)=\Drm\!\R_\varphibf^{\rm pol}(\xibf)[\etabf]$. Setting $S=\sf(\varphibf + \etabf)$, we obtain from \eqref{eq:Df} with $\psibf=\varphibf+\xibf$ and $\vbf=\etabf$ that 
\[
    \T^{\rm pol}_\xibf(\etabf) = (\etabf - \R^{\rm pol}_\varphibf(\xibf)X)\,S^{-1},
\]
where $X$ solves the Lyapunov equation
$SX + XS = \outerprod{\xibf}{\etabf}+\outerprod{\etabf}{\xibf}$.
Note that for $\xibf=\tau \etabf$ with $\tau>0$, as required in step~8 of Algorithm~\ref{alg:rcg}, the solution of this equation is given by $X=\tau S^{-1}\outerprod{\etabf}{\etabf}$, and therefore,
\[
 \T^{\rm pol}_{\tau\etabf}(\etabf) = \etabf\, S^{-1} - \tau\,\R^{\rm pol}_\varphibf(\tau\etabf) \outerprod{\etabf\, S^{-1}}{\etabf\, S^{-1}}.
\]

Alternative vector transports developed for the matrix Stiefel manifold in \cite[Section~8.1]{Absil2007-ab} and \cite{Zhu2016,ZhuS20} can be extended to the infinite-dimensional setting in a~straightforward way.

\subsection{Step size strategy}\label{sec:step_size}

The convergence of the RCG method strongly depends on the chosen step sizes $\tau^{(m)}$. For $\varphibf^{(m+1)}=\R^{}_{\varphibf^{(m)}}(\tau^{(m)}\etabf^{(m)})$, we require them to satisfy the strong Wolfe conditions \cite{wolfe} composed of the Armijo condition
\[
    \E\big( \varphibf^{(m+1)}\big) \le \E(\varphibf^{(m)}) + \delta{} \,\tau^{(m)}  \,g_{\varphibf^{(m)}} \big( \grad\, \E(\varphibf^{(m)}), \etabf^{(m)} \big)
\]
and the strong curvature condition
\[
    \big| g_{\varphibf^{(m+1)}}\big(\grad\,\E(\varphibf^{(m+1)}), \Drm\!\R^{}_{\varphibf^{(m)}}(\tau^{(m)} \etabf^{(m)})[\etabf^{(m)}] \big)\big|  \le \sigma{} \big| g_{\varphibf^{(m)}} \big(\grad\,\E( \varphibf^{(m)}), \etabf^{(m)} \big)\big|,
\]
given constants $\delta{},\sigma{}>0$. Using the residual $\rbf^{(m)}=\mathrm{res}\,\E(\varphibf^{(m)})$ and the differentiated retraction transport $\T_{\tau^{(m)}\etabf^{(m)}}^{\R}(\etabf^{(m)})=\Drm\!\R_{\varphibf^{(m)}}(\tau^{(m)}\etabf^{(m)})[\etabf^{(m)}]$, we derive from  \eqref{eq:calc_metric_g} the equivalent formulation of these conditions 
\begin{align} \label{eq:armijo_cond}
\E\big( \varphibf^{(m+1)}\big)  & \le \E( \varphibf^{(m)}) + \delta{} \,\tau^{(m)}  \,\langle \rbf^{(m)}, \etabf^{(m)} \rangle, \\
 \label{eq:strong_curvature}
\big| \langle \rbf^{(m+1)}, \T^{\R}_{\tau^{(m)}\etabf^{(m)}}(\etabf^{(m)}) \rangle\big|  & \le \sigma{} \big| \langle \rbf^{(m)}, \etabf^{(m)} \rangle\big|.
\end{align}
Note that for a given search direction, the strong Wolfe conditions do not depend on the metric. This is natural since the metric does not influence the updated point $\varphibf^{(m+1)}$ or the values of $\E$ and its derivative $\Drm\E$ evaluated at $\varphibf^{(m+1)}$.

In order to compute step sizes satisfying the conditions~\eqref{eq:armijo_cond} and \eqref{eq:strong_curvature}, we use an~adaptive backtracking strategy augmented with a secant step as described in \cite{HZBacktrack}. This secant step corresponds to one iteration of the secant method~\cite{PaTa13}. 
In practice, checking the Armijo condition \eqref{eq:armijo_cond} as described in~\cite{HZBacktrack} has proven persistently problematic. To overcome these difficulties, we check the strong formulation of the approximate Wolfe conditions~\cite[Condition~T1]{HZBacktrack} instead, which read
\begin{equation}\label{eq:approx_wolfe}
    \begin{aligned}
    \E\big( \varphibf^{(m+1)}\big) &\le (1 + \epsilon)\E(\varphibf^{(m)}),\\
        \big| \langle \rbf^{(m+1)}, \T^{\R}_{\tau^{(m)}\etabf^{(m)}}(\etabf^{(m)}) \rangle\big|  &\le \mathrm{min} \{\sigma{}, 1 - 2 \delta \} \big| \langle \rbf^{(m)}, \etabf^{(m)} \rangle\big|          
    \end{aligned}
\end{equation}
with the parameters satisfying $0<\delta < \tfrac{1}{2}$ and $\delta < \sigma < 1$, and a small $0<\epsilon \ll 1$ to account for inaccuracies in the energy calculation.
In Algorithm~\ref{alg:mod_secant_step}, we present the modified backtracking step size strategy. In every RCG iteration, the backtracking process is initiated with the step size $\tau^{(k-1)}$ from the previous iteration, and we set $\tau^{(-1)}=1$. 

\begin{algorithm2e}[t]
\SetKwInOut{Input}{input}
\SetKwRepeat{Do}{do}{while}
\SetAlgoLined
\SetNoFillComment 
\Input{$f(t) \coloneqq \E \big( \R^{}_{\varphibf^{(m)}}(t\,\etabf^{(m)})\big)$ with $\varphibf^{(m)} \in \St$ and $\etabf^{(m)} \in \Tan_{\varphibf^{(m)}}\St$, backtracking parameters $0 < \delta{} < \tfrac{1}{2}$, $\delta < \sigma < 1$, $0 < \gamma < 1$, $0<\epsilon \ll 1$, \\ initial guess~$\tau > 0$} 
\BlankLine
$[a,b] = [0, \tau]$\;
\While{$\tau$ does not suffice \eqref{eq:approx_wolfe}}{
    $\displaystyle{\tau = \frac{a f'(b) - b f'(a)}{f'(b) - f'(a)}}$ \tcc*{secant step}
    \uIf{$f(\tau) > (1 + \epsilon) f(0)$ and $f'(\tau) < 0$}{
        $\tau = \gamma b$ \hfill\tcc{if step size is too big, decrease search interval}
        $[a,b] = [0, \tau]$\;
    }
    \uElseIf{$\tau < a$}{
        $\tau = \tfrac{1}{\gamma} b$  \hfill\tcc{if step size is too small, increase search interval}
        $[a,b] = [0, \tau]$\;
    }
\uElseIf{$\tau > b$}{
        $[a,b] = [b, \tau]$ \hfill\tcc{if $\tau$ is right of the interval, shift to the right} 
    }
    \ElseIf{$a \le \tau \le b$}{
    \uIf{$f'(\tau) < 0$}{
           $[a,b] = [a, \tau]$ \hfill\tcc{if $f'$ changes sign in $[a, \tau]$, take it}
    }
    \Else{
         $[a,b] = [\tau, b]$ \hfill\tcc{otherwise, take $[\tau, b]$}
    }
  }
}
\KwRet{$\tau$}
 \caption{Modified secant step size strategy}
     \label{alg:mod_secant_step}
\end{algorithm2e}

\subsection{Calculation of CG parameters}
The CG parameters $\beta^{(m+1)}$ significantly influence the performance of the RCG method. In unconstrained optimization, there is a wide array of such parameters (see, e.g.~\cite{HagZ06}), and although they are equivalent in the case of a quadratic energy functional, their behavior may differ for more general problems. 

For Riemannian optimization on matrix manifolds, several different CG parameters have been presented in \cite{SAKAI2023127685}. Their extension to the infinite-dimensional setting is straightforward. In fact, using \eqref{eq:calc_metric_g}, most of the CG parameters can be rewritten so that they become independent of the specific metric and only involve the calculation of the canonical pairing~$\langle\cdot,\cdot\rangle_{{}}$ for the dual evaluation. Following \cite{SaIi21}, we employ the hybrid Fletcher-Reeves Polak-Ribi\`ere-Polyak (FR-PRP) parameters \cite{Hu1991} given by
\[
    \beta^{(m+1)}_{\rm FR-PRP} = \max\big\{0, \min\{\beta^{(m+1)}_{\rm FR}, \beta^{(m+1)}_{\rm PRP}\}\big\}
\]
with the Fletcher-Reeves parameters \cite{FR}
\[
    \beta^{(m+1)}_{\rm FR} = \frac{\langle \rbf^{(m+1)}, \grad\, \E(\varphibf^{(m+1)}) \rangle_{}}{\langle \rbf^{(m)}, \grad\,\E(\varphibf^{(m)}) \rangle_{}} 
\]
and the Polak-Ribi\`ere-Polyak parameters \cite{PolakRibiere, POLYAK196994}
\[ 
\beta^{(m+1)}_{\rm PRP} = \frac{\langle \rbf^{(m+1)}, \grad \,\E(\varphibf^{(m+1)}) - \T^{\R}_{\tau^{(m)} \eta^{(m)}} \big(\grad \,\E(\varphibf^{(m)})\big) \rangle_{}}{\langle \rbf^{(m)}, \grad\,\E(\varphibf^{(m)}) \rangle_{}}.
\] 
Note that we slightly modified the usual formulas in order to be able to apply the simplifying relation \eqref{eq:calc_metric_g}.

\begin{rem}[Convergence analysis]
The convergence of the RCG method has been analyzed in~\textup{\cite{RiWi2012,SaIi21,SAKAI2023127685}} in both finite- and infinite-dimensional settings. More specifically, under assumptions that the cost functional $\,\E$ is bounded from below,
the Riemannian gradient $\grad\,\E(\varphibf)$ is Lipschitz continuous with respect to the retraction $\R_\varphibf$, and that 
step sizes $\tau^{(m)}$ satisfy the strong Wolfe conditions~\eqref{eq:armijo_cond} and~\eqref{eq:strong_curvature} with $0 < \delta{} < \sigma{} < \tfrac{1}{2}$, it has been shown in \textup{\cite{SaIi21}} that the RCG method with the hybrid FR--PRP parameters $\beta_{\rm FR-PRP}^{(m+1)}$, and the scaled differentiated retraction vector transport (where scaling prevents the transported vector from growing in the metric norm) 
generates a~sequence $(\varphibf^{(m)})_{m \in\mathbb N}$ that satisfies
\[
    \liminf\limits_{m \to \infty} \| \grad\, \E(\varphibf^{(m)}) \|_{\varphibf^{(m)}}^2 = 0.
\]
This result, together with the continuity of $\grad\, \E(\varphibf)$, implies that if the sequence $(\varphibf^{(m)})_{m \in\mathbb N}$ converges, its limit is a~critical point.
The convergence proof in \textup{\cite{SaIi21}}, which is a~modified version of that in~\textup{\cite{RiWi2012}}, is by contradiction to Zoutendijk's theorem \textup{\cite{zoutendijk}}. All arguments, as~well as the proof of Zoutendijk's theorem, are applicable to the infinite-dimensional case, as already discussed in~\textup{\cite{RiWi2012}}.
\end{rem}

\section{Energy-adaptive optimization}\label{sec:earcg}
In this section, we introduce an energy-adaptive metric derived from a shifted Hamiltonian and develop the corresponding energy-adaptive RCG method. We also discuss an adaptive strategy for computing the shifts that ensures the coercivity of the shifted Hamiltonian at each RCG iteration.

\subsection{Energy-adaptive metric}\label{sec:ea-metric}
As observed in \cite{MisS16,RiWi2012}, the choice of metric is crucial for the computational efficiency of Riemannian optimization methods. Defining an~adaptively changing metric, which exploits the first-order information of the energy functional, is particularly advantageous, as it exhibits a~preconditioning effect and significantly accelerates the convergence of the algorithms. Such an~energy-adaptive metric has first  been introduced in~\cite{HenP20} for the Gross-Pitaevskii minimization problem and then extended in~\cite{AltPS22} to Kohn-Sham-type models with a~coercive Hamiltonian.  However, the Kohn-Sham Hamiltonian $\A_\varphibf$ in~\eqref{eq:hks} is not coercive and, hence, cannot be employed directly to define a~metric on $\St$. This difficulty can be overcome by using an appropriate shift.  
For $\varphibf\in\St$  and a~shift matrix $\Sigma_\varphibf \in \Herm(\pp)$ depending smoothly on $\varphibf$, we define the \emph{shifted Hamiltonian}
\begin{equation}\label{eq:shiftedHam} 
    \A_{\varphibf,\Sigma_\varphibf}(\vbf) = \A_\varphibf\, \vbf - \vbf\, \Sigma_\varphibf \qquad \text{for all }\vbf\in\HHH,
\end{equation}
and in case it is bounded and coercive, the (strong) \emph{energy-adaptive Riemannian metric}
\begin{equation}\label{eq:ea_metric}
 g_{\ea{\varphibf}{\Sigma_\varphibf}}(\eta, \xibf) 
 = \langle \A_{\varphibf, \Sigma_\varphibf}(\etabf), \xibf \rangle
 \qquad \text{for all }\etabf,\xibf\in \Tan_\varphibf\, \St.
 \end{equation}
Note that, unlike the Hamiltonian $\A_\varphibf$, the shifted Hamiltonian $\A_{\varphibf,\Sigma_\varphibf}$ is no longer right associative. Therefore, we must explicitly enclose its argument in parentheses.

\begin{rem}\label{rem:Garding} 
In \cite[Lemma~5.1]{AltPS22}, it has been shown that under some boundedness conditions on $\vartheta_{\rm ion}$ and~$\E_{\rm xc}$, the linear Hermitian operator $\A_\varphibf$ is uniformly bounded
and there exists a~$\sigma_\varphibf>0$ such that $\A_{\varphibf,\Sigma_\varphibf}$ with the uniform shift $\Sigma_\varphibf = - \sigma_\varphibf I_p$ is coercive with the coercivity constant $1$.
\end{rem}

\subsection{Energy-adaptive gradient}\label{sec:ea-gradient}
We now derive an~expression for the Riemannian gradient of the Kohn-Sham energy functional $\,\E$ with respect to the energy-adaptive metric $g_{\ea{\varphibf}{\Sigma_\varphibf}}$ defined in \eqref{eq:ea_metric}, which for short is called the \emph{energy-adaptive Riemannian gradient}. 
This requires the use of an appropriate choice of the shift $\Sigma_\varphibf$, as specified in the following assumption.
\begin{asm}\label{asm:shift}
    A~shift $\Sigma_\varphibf\in\Herm(p)$ is such that the shifted Hamiltonian $\A_{\varphibf,\Sigma_\varphibf}$ is coercive with respect to $\|\cdot\|_{\HHH}$ on $\Tan_\varphibf\, \St$ and invertible on $\HHH$.
\end{asm}
This assumption ensures that the energy-adaptive metric is positive definite and the following projection is well defined.
The existence of such a~shift will be discussed in \mbox{Section~\ref{sec:shift}}. Under Assumption~\ref{asm:shift}, for $\varphibf \in \St$, we define an~\emph{energy-adaptive projection}
$\P_\varphibf : \HHH \to \Tan_\varphibf\, \St$ via the variational equality
\begin{equation}\label{eq:defP}
g_{\ea{\varphibf}{\Sigma_\varphibf}}(\P_\varphibf(\vbf), \etabf ) = \langle \A_{\varphibf, \Sigma_\varphibf}(\vbf), \etabf \rangle \qquad \text{for all }\vbf\in \HHH, \;\etabf \in \Tan_\varphibf\, \St.
\end{equation}
Note that the existence and uniqueness of $\P_\varphibf(\vbf)$ is guaranteed by the Riesz representation theorem. In the case where $\A_{\varphibf, \Sigma_\varphibf}$ is coercive on $\HHH$, $\P_\varphibf$ is exactly the orthogonal projection with respect to the induced inner product $g_{\ea{\varphibf}{\Sigma_\varphibf}}$ on $\HHH$. The following proposition provides an~expression for the projection $\P_\varphibf$.

\begin{prop}\label{prop:projP}
Let Assumption~\textup{\ref{asm:shift}} be fulfilled. For $\varphibf \in \St$ and $\vbf \in \HHH$, the energy-adaptive projection is given by 
\begin{equation}\label{eq:projP}
\P_\varphibf(\vbf) = \vbf - \A_{\varphibf, \Sigma_\varphibf}^{-1}(\varphibf X_{\vbf}),
\end{equation}
where $X_{\vbf} \in \Herm(p)$ is the unique solution to %the matrix equation
\begin{equation}\label{eq:eqXv}
\outerprod{ \A_{\varphibf, \Sigma_\varphibf}^{-1}(\varphibf X_{\vbf})}{\varphibf} + \outerprod{\varphibf}{\A_{\varphibf, \Sigma_\varphibf}^{-1}(\varphibf X_{\vbf})} = \outerprod{\vbf}{\varphibf} + \outerprod{\varphibf}{\vbf}.
\end{equation}
\end{prop}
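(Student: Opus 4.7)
The Riesz representation theorem (invoked just before the proposition) already guarantees the existence and uniqueness of $\P_\varphibf(\vbf) \in \Tan_\varphibf \St$ satisfying (\ref{eq:defP}), so the task reduces to producing the candidate $\vbf - \A_{\varphibf, \Sigma_\varphibf}^{-1}(\varphibf X_\vbf)$, checking that it is tangent, and checking the variational equality. Everything rests on one recurring observation: for any $X \in \Herm(p)$ and any $\etabf \in \Tan_\varphibf \St$, the defining condition $\outerprod{\varphibf}{\etabf} + \outerprod{\etabf}{\varphibf} = 0$ says $\outerprod{\varphibf}{\etabf}$ is skew-Hermitian, so $X \outerprod{\varphibf}{\etabf}$ is skew-Hermitian and has purely imaginary trace. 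Together with the compatibility identity $\outerprod{\varphibf X}{\etabf} = X^\ast \outerprod{\varphibf}{\etabf}$ from the paper's outer-product formalism, this yields
\[
\langle \varphibf X, \etabf \rangle = \re \bigl( \trace \outerprod{\varphibf X}{\etabf} \bigr) = \re \bigl( \trace X \outerprod{\varphibf}{\etabf} \bigr) = 0.
\]

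First I would establish existence and uniqueness of $X_\vbf$. Equation~(\ref{eq:eqXv}) defines a linear map $\Phi : \Herm(p) \to \Herm(p)$, and since $\Herm(p)$ is finite-dimensional, injectivity is enough. Assume $\Phi(X) = 0$ and set $\wbf = \A_{\varphibf, \Sigma_\varphibf}^{-1}(\varphibf X)$, which is meaningful since $\varphibf X \in \HHH \subset \HHH'$ and $\A_{\varphibf,\Sigma_\varphibf}$ is invertible on $\HHH$ by Assumption~\ref{asm:shift}. By construction $\outerprod{\varphibf}{\wbf} + \outerprod{\wbf}{\varphibf} = 0$, so $\wbf \in \Tan_\varphibf \St$. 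The definition of the energy-adaptive metric (\ref{eq:ea_metric}) combined with the trace identity above then gives
\[
g_{\ea{\varphibf}{\Sigma_\varphibf}}(\wbf, \wbf) = \langle \A_{\varphibf, \Sigma_\varphibf}(\wbf), \wbf \rangle = \langle \varphibf X, \wbf \rangle = 0.
\]
Coercivity of $\A_{\varphibf, \Sigma_\varphibf}$ on $\Tan_\varphibf \St$ forces $\wbf = 0$, invertibility of $\A_{\varphibf, \Sigma_\varphibf}$ on $\HHH$ then gives $\varphibf X = 0$, and linear independence of the components of $\varphibf \in \St$ finally yields $X = 0$.

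With $X_\vbf$ uniquely determined, I would then verify the formula itself. Equation (\ref{eq:eqXv}) rewrites directly as the tangency condition $\outerprod{\varphibf}{\P_\varphibf(\vbf)} + \outerprod{\P_\varphibf(\vbf)}{\varphibf} = 0$ for the candidate $\P_\varphibf(\vbf) = \vbf - \A_{\varphibf,\Sigma_\varphibf}^{-1}(\varphibf X_\vbf)$, so the candidate lies in $\Tan_\varphibf \St$. For the variational equality, a direct computation based on (\ref{eq:ea_metric}) shows
\[
g_{\ea{\varphibf}{\Sigma_\varphibf}}(\P_\varphibf(\vbf), \etabf) = \langle \A_{\varphibf, \Sigma_\varphibf}(\vbf) - \varphibf X_\vbf, \etabf \rangle = \langle \A_{\varphibf, \Sigma_\varphibf}(\vbf), \etabf \rangle
\]
for all $\etabf \in \Tan_\varphibf \St$, with the $\varphibf X_\vbf$ contribution eliminated once more by the trace identity of the first paragraph. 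Uniqueness of the projection then identifies the candidate with $\P_\varphibf(\vbf)$.

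The only genuinely delicate step is the injectivity argument in the second paragraph: coercivity of the shifted Hamiltonian must be used \emph{on the tangent space} specifically, and it is the Hermitian-versus-skew-Hermitian algebra that makes $\langle \varphibf X, \wbf \rangle$ vanish when $\wbf$ happens to be tangent, thereby collapsing $g_{\ea{\varphibf}{\Sigma_\varphibf}}(\wbf,\wbf)$ to zero. Everything else is straightforward bookkeeping within the outer-product formalism introduced in Section~\ref{sec:ks}.
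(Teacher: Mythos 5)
Your proof is correct, but it reaches the conclusion by a genuinely different route from the paper's. The paper \emph{derives} the form \eqref{eq:projP} structurally: starting from \eqref{eq:defP} it observes that $\A_{\varphibf,\Sigma_\varphibf}(\vbf-\P_\varphibf(\vbf))$ annihilates $\Tan_\varphibf\,\St$, hence lies in the conormal space $\Nrm_\varphibf^*\,\St$, which it identifies as $\{\varphibf X : X\in\Herm(p)\}$ via the decomposition $\HHH'=\Tan_\varphibf^*\,\St\oplus\Nrm_\varphibf^*\,\St$; existence of $X_\vbf$ is then automatic, \eqref{eq:eqXv} follows from tangency of $\P_\varphibf(\vbf)$, and uniqueness is shown by checking that any Hermitian solution $Y_\vbf$ reproduces the same projection. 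You instead treat \eqref{eq:eqXv} as a linear equation on the finite-dimensional real vector space $\Herm(p)$, prove injectivity of the associated map by combining the skew-Hermitian trace identity with coercivity of $\A_{\varphibf,\Sigma_\varphibf}$ on $\Tan_\varphibf\,\St$, deduce surjectivity (hence existence of $X_\vbf$) by dimension count, and only then verify that the resulting candidate satisfies tangency and the variational identity \eqref{eq:defP}. Your injectivity computation is essentially the paper's uniqueness computation in kernel form, so the two arguments share their core algebra; what your version buys is self-containedness (no need to characterize the conormal space or invoke the dual-space decomposition) at the cost of making the formula look like an ansatz to be verified rather than a consequence of the geometry. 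Both proofs use the same two essential ingredients: that $\langle\varphibf X,\etabf\rangle=0$ for $X\in\Herm(p)$ and $\etabf\in\Tan_\varphibf\,\St$, and the positive definiteness of $g_{\ea{\varphibf}{\Sigma_\varphibf}}$ on the tangent space.
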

\begin{proof}
Let $\varphibf \in \St$ and $\vbf \in \HHH$.
By definition, for all $\etabf \in \Tan_\varphibf\, \St$, we have 
\[
\langle \A_{\varphibf, \Sigma_\varphibf}(\P_\varphibf(\vbf)), \etabf \rangle = \langle \A_{\varphibf, \Sigma_\varphibf}(\vbf), \etabf \rangle. 
\]
Due to the linearity of $\A_{\varphibf, \Sigma_\varphibf}$, this implies that $\A_{\varphibf, \Sigma_\varphibf}(\vbf - \P_\varphibf (\vbf)) \in \Nrm_\varphibf^*\,\St$, where 
    \[
    \Nrm_\varphibf^*\,\St = \big\{\vbf' \in \HHH' \enskip : \enskip \langle \vbf' , \etabf \rangle = 0 \;\text{ for all } \etabf \in \Tan_\varphibf\, \St\big\}
    \]
is the conormal space of $\St$ at $\varphibf$ with respect to $\langle\cdot, \cdot \rangle$. The definition of the cotangent space $\Tan^\ast_\varphibf\, \St$ yields then the orthogonal decomposition     $\HHH' = \Tan^\ast_\varphibf\, \St \oplus \Nrm_\varphibf^*\,\St$ 
which, as the dimension of $\Tan^\ast_\varphibf\, \St$ is finite, gives 
    \[
    \Nrm_\varphibf^*\,\St = \big\{\varphibf X \enskip : \enskip X \in\Herm(p)\big\}.
    \]
Thus, there exists $X_{\vbf} \in \Herm(p)$ such that $\A_{\varphibf, \Sigma_\varphibf}(\vbf - \P_\varphibf (\vbf)) = \varphibf X_{\vbf}$.     Resolving this equation for $\P_\varphibf(\vbf)$, we obtain \eqref{eq:projP}. Since $\P_\varphibf(\vbf) \in \Tan_\varphibf\, \St$, we know that
    \[
    0_p = \outerprod{\P_\varphibf(\vbf)}{\varphibf} + \outerprod{\varphibf}{\P_\varphibf(\vbf)} = \outerprod{\vbf}{\varphibf} - \outerprod{\A_{\varphibf, \Sigma_\varphibf}^{-1}(\varphibf X_{\vbf})}{\varphibf} + \outerprod{\varphibf}{\vbf} - \outerprod{\varphibf}{\A_{\varphibf, \Sigma_\varphibf}^{-1}(\varphibf X_{\vbf})},
    \]
which can be re-arranged into \eqref{eq:eqXv}. It remains to show that this equation has a unique Hermitian solution. For that, let $Y_{\vbf}\in\Herm(p)$ solve equation \eqref{eq:eqXv} and let $\xibf = \vbf -  \A_{\varphibf, \Sigma_\varphibf}^{-1}(\varphibf Y_{\vbf}) $. Then we have
\begin{equation*}
    \begin{aligned}
        \outerprod{\xibf}{\varphibf} + \outerprod{\varphibf}{\xibf} = \outerprod{\vbf}{\varphibf} - \outerprod{\A_{\varphibf, \Sigma_\varphibf}^{-1}(\varphibf Y_{\vbf})}{\varphibf} + \outerprod{\varphibf}{\vbf} - \outerprod{\varphibf}{\A_{\varphibf, \Sigma_\varphibf}^{-1}(\varphibf Y_{\vbf})}
        = 0_p,
    \end{aligned}
\end{equation*}
and, hence, $\xibf \in \Tan_\varphibf\, \St$.  Additionally, for all $\etabf \in \Tan_\varphibf\, \St$, we obtain that 
\begin{equation*}
    \begin{aligned}
        g_{\ea{\varphibf}{\Sigma_\varphibf}}(\xibf, \etabf )
        & =  \langle \A_{\varphibf, \Sigma_\varphibf}(\vbf -  \A_{\varphibf, \Sigma_\varphibf}^{-1}(\varphibf Y_{\vbf})), \etabf \rangle \\
        & = \langle \A_{\varphibf, \Sigma_\varphibf}(\vbf), \etabf\rangle - \re\big(\trace \big(Y_{\vbf}\outerprod{\varphibf }{\etabf}\big)\big) 
        = \langle \A_{\varphibf, \Sigma_\varphibf}(\vbf), \etabf\rangle.
    \end{aligned}
\end{equation*}
in the last step, we used the fact that the trace of the product of the Hermitian and skew-Hermitian matrices is imaginary. 
    From the definition and uniqueness of $\P_\varphibf(\vbf)$ follows that $\xibf = \P_\varphibf(\vbf)$ and, consequently, 
    $\vbf -  \A_{\varphibf, \Sigma_\varphibf}^{-1}(\varphibf Y_{\vbf}) = \vbf -  \A_{\varphibf, \Sigma_\varphibf}^{-1}(\varphibf X_{\vbf})$.
    The bijectivity of $\A_{\varphibf, \Sigma_\varphibf}^{-1}$ gives 
    $\varphibf Y_{\vbf} = \varphibf X_{\vbf}$. Finally, by computing  the outer product with $\varphibf$, we find that
    $Y_{\vbf} = \outerprod{\varphibf}{\varphibf Y_{\vbf}} = \outerprod{\varphibf}{\varphibf X_{\vbf}} = X_{\vbf}$.
\end{proof}

Using Proposition~\ref{prop:projP}, we derive the following representation for the energy-adaptive Riemannian gradient of $\,\E$, denoted by $\grad_{\ea{\varphibf}{\Sigma_\varphibf}} \,\E(\varphibf)$.

\begin{prop}\label{prop:ea_grad_shift_general}
Let Assumption~\textup{\ref{asm:shift}} be fulfilled.
    The energy-adaptive Riemannian gradient of the functional $\E$ in \eqref{eq:eks} at $\varphibf \in \St$ is given by
    \begin{equation}\label{eq:eagrad_raw}
    \grad_{\ea{\varphibf}{\Sigma_\varphibf}} \,\E(\varphibf) =  \varphibf - \A_{\varphibf, \Sigma_\varphibf}^{-1} (\varphibf X_{\varphibf}),
    \end{equation}
    where $X_{\varphibf} \in \Herm(p)$ is the unique solution to the matrix equation
    \begin{equation}\label{eq:Xphi}
        \outerprod{ \A_{\varphibf, \Sigma_\varphibf}^{-1}(\varphibf X_{\varphibf})}{\varphibf} + \outerprod{\varphibf}{\A_{\varphibf, \Sigma_\varphibf}^{-1}(\varphibf X_{\varphibf})} = 2 I_p.
    \end{equation}
\end{prop}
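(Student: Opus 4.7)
The plan is to recognize that the formula \eqref{eq:eagrad_raw} is exactly what one obtains by applying the energy-adaptive projection $\P_\varphibf$ of Proposition~\ref{prop:projP} to the point $\varphibf$ itself, i.e. to show that $\grad_{\ea{\varphibf}{\Sigma_\varphibf}} \E(\varphibf) = \P_\varphibf(\varphibf)$, and then specialize the formula in \eqref{eq:projP}--\eqref{eq:eqXv} with $\vbf = \varphibf$, using that $\outerprod{\varphibf}{\varphibf} = I_p$ on $\St$.

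First I would establish the following key identity: for every Hermitian matrix $M \in \Herm(p)$ and every $\etabf \in \Tan_\varphibf\, \St$,
\[
\langle \varphibf M, \etabf \rangle
= \re\!\big( \trace( M^\ast \outerprod{\varphibf}{\etabf})\big) = 0,
\]
because $\outerprod{\varphibf}{\etabf}$ is skew-Hermitian by the characterization of $\Tan_\varphibf\, \St$ in \eqref{eq:TanSt}, and the trace of the product of a Hermitian and a skew-Hermitian matrix is purely imaginary (the very same fact exploited at the end of the proof of Proposition~\ref{prop:projP}). Applying this to $M = \Sigma_\varphibf$ gives $\langle \varphibf\,\Sigma_\varphibf, \etabf\rangle = 0$, and therefore
\[
\langle \A_{\varphibf,\Sigma_\varphibf}(\varphibf), \etabf \rangle
= \langle \A_\varphibf\,\varphibf, \etabf \rangle - \langle \varphibf\,\Sigma_\varphibf, \etabf \rangle
= \Drm\E(\varphibf)[\etabf]
\qquad \text{for all } \etabf \in \Tan_\varphibf\,\St,
\]
where the last equality uses \eqref{eq:derivative}.

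Next, comparing this with the defining relation \eqref{eq:defP} of the projection $\P_\varphibf$ (applied with $\vbf = \varphibf$) and with the defining relation \eqref{eq:defGrad} of the Riemannian gradient with respect to the energy-adaptive metric $g_{\ea{\varphibf}{\Sigma_\varphibf}}$, I obtain
\[
g_{\ea{\varphibf}{\Sigma_\varphibf}}\!\big(\P_\varphibf(\varphibf), \etabf\big)
= \Drm\E(\varphibf)[\etabf]
= g_{\ea{\varphibf}{\Sigma_\varphibf}}\!\big(\grad_{\ea{\varphibf}{\Sigma_\varphibf}} \E(\varphibf), \etabf\big)
\qquad \text{for all } \etabf \in \Tan_\varphibf\,\St.
\]
Since $\P_\varphibf(\varphibf)$ lies in $\Tan_\varphibf\, \St$ by construction, and the energy-adaptive metric is positive definite on $\Tan_\varphibf\, \St$ under Assumption~\ref{asm:shift}, the uniqueness in Riesz representation yields $\grad_{\ea{\varphibf}{\Sigma_\varphibf}} \E(\varphibf) = \P_\varphibf(\varphibf)$.

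Finally, I would simply plug $\vbf = \varphibf$ into Proposition~\ref{prop:projP}. The formula \eqref{eq:projP} immediately gives \eqref{eq:eagrad_raw}, and the right-hand side of \eqref{eq:eqXv} becomes $\outerprod{\varphibf}{\varphibf} + \outerprod{\varphibf}{\varphibf} = 2I_p$ since $\varphibf \in \St$, yielding \eqref{eq:Xphi}. Uniqueness and Hermiticity of $X_\varphibf$ are inherited from Proposition~\ref{prop:projP}. There is no real obstacle here beyond the bookkeeping; the only conceptual point worth highlighting is that the shift term $\varphibf\Sigma_\varphibf$ plays no role in the variational characterization of the gradient because it lies in the conormal space $\Nrm_\varphibf^\ast\,\St$, so the shift affects the metric (and hence the preconditioning effect) but not the right-hand side against which the gradient is defined.
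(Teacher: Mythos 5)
Your proposal is correct and follows essentially the same route as the paper: identify the gradient with $\P_\varphibf(\varphibf)$ via the skew-Hermitian trace argument that kills the shift term, then specialize Proposition~\ref{prop:projP} to $\vbf=\varphibf$ using $\outerprod{\varphibf}{\varphibf}=I_p$. No substantive differences.
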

\begin{proof} First, we show that 
    $\grad_{\ea{\varphibf}{\Sigma_\varphibf}} \,\E(\varphibf) = \P_\varphibf(\varphibf)$.
    Obviously, $\P_\varphibf(\varphibf) \in \Tan_\varphibf\, \St$. Additionally, for all $\etabf \in \Tan_\varphibf\, \St$, we obtain with~\eqref{eq:defP}, \eqref{eq:shiftedHam} and \eqref{eq:derivative} that  
    \[
    g_{\ea{\varphibf}{\Sigma_\varphibf}}(\P_\varphibf(\varphibf), \etabf ) 
    = \langle \A_{\varphibf,\Sigma_\varphibf}(\varphibf), \etabf\rangle
    = \langle \A_\varphibf\, \varphibf, \etabf \rangle - \re\big(\trace\big( \Sigma_\varphibf \outerprod{\varphibf}{\etabf} \big) \big)
    = \Drm \E(\varphibf)[\etabf],
    \]
    where we again used the fact that the trace of the product of the Hermitian and skew-Hermitian matrices is imaginary. Consequently, $\P_\varphibf(\varphibf)$ is the Riesz representative of the derivative of $\,\E$ with respect to the energy-adaptive metric $g_{\ea{\varphibf}{\Sigma_\varphibf}}$. Then the representation~\eqref{eq:eagrad_raw} follows immediately from Proposition~\ref{prop:projP}. 
\end{proof}

Under additional assumptions on $\outerprod{\varphibf}{\A_{\varphibf,\Sigma_\varphibf}^{-1}(\varphibf)}$, the matrix equation \eqref{eq:Xphi} can be solved explicitly, leading to a more explicit characterization of the energy-adaptive Riemannian gradient.
\begin{crl}\label{cor:ea_grad_shift}
Let Assumption~\textup{\ref{asm:shift}} be fulfilled.
If the matrix $\outerprod{\varphibf}{\A_{\varphibf,\Sigma_\varphibf}^{-1}(\varphibf)}$ is Hermitian, invertible and commutes with~$\Sigma_\varphibf$, then the solution to \eqref{eq:Xphi} has the form $X_{\varphibf} = \outerprod{\varphibf}{\A_{\varphibf,\Sigma_\varphibf}^{-1}(\varphibf)}^{-1}$, and the energy-adaptive Riemannian gradient of $\,\E$ at $\varphibf\in\St$ is given by
    \begin{equation}\label{eq:eagrad}
    \grad_{\ea{\varphibf}{\Sigma_\varphibf}} \,\E(\varphibf) =  \varphibf - \A_{\varphibf, \Sigma_\varphibf}^{-1} (\varphibf) \outerprod{\varphibf}{\A_{\varphibf,\Sigma_\varphibf}^{-1}(\varphibf)}^{-1}.
    \end{equation}
\end{crl}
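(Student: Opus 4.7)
The plan is to verify directly that $X_{\varphibf} = \outerprod{\varphibf}{\A_{\varphibf,\Sigma_\varphibf}^{-1}(\varphibf)}^{-1}$ solves the matrix equation~\eqref{eq:Xphi}; by the uniqueness asserted in Proposition~\ref{prop:ea_grad_shift_general} this will pin down $X_{\varphibf}$, and the closed-form expression~\eqref{eq:eagrad} for the gradient then drops out of~\eqref{eq:eagrad_raw}. Throughout I abbreviate $M = \outerprod{\varphibf}{\A_{\varphibf,\Sigma_\varphibf}^{-1}(\varphibf)}$; by hypothesis $M$ is Hermitian, invertible, and commutes with $\Sigma_\varphibf$, so that $X_{\varphibf} = M^{-1}$ inherits all three properties.

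The main technical obstacle is the failure of right-associativity of the shifted Hamiltonian noted after~\eqref{eq:shiftedHam}: in general $\A_{\varphibf,\Sigma_\varphibf}(\vbf N) \neq \A_{\varphibf,\Sigma_\varphibf}(\vbf)\, N$, which is precisely what blocks an explicit solution of~\eqref{eq:Xphi} in Proposition~\ref{prop:ea_grad_shift_general}. The key observation that bypasses this is that right-associativity \emph{does} hold whenever $N$ commutes with $\Sigma_\varphibf$, since
\[
\A_{\varphibf,\Sigma_\varphibf}(\vbf N) = \A_\varphibf(\vbf)\,N - \vbf N\, \Sigma_\varphibf = \A_\varphibf(\vbf)\,N - \vbf \,\Sigma_\varphibf N = \A_{\varphibf,\Sigma_\varphibf}(\vbf)\, N,
\]
where the first equality uses right-associativity of the unshifted Hamiltonian $\A_\varphibf$. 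Applying this with $\vbf = \A_{\varphibf,\Sigma_\varphibf}^{-1}(\varphibf)$ and $N = X_{\varphibf}$ and inverting yields the crucial identity
\[
\A_{\varphibf,\Sigma_\varphibf}^{-1}(\varphibf X_{\varphibf}) = \A_{\varphibf,\Sigma_\varphibf}^{-1}(\varphibf)\, X_{\varphibf}.
\]

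To conclude, I substitute this identity into the left-hand side of~\eqref{eq:Xphi} and use the matrix-multiplication rules for the outer product together with $X_{\varphibf}^\ast = X_{\varphibf}$ and $M^\ast = M$ to compute
\[
X_{\varphibf}^\ast\,\outerprod{\A_{\varphibf,\Sigma_\varphibf}^{-1}(\varphibf)}{\varphibf} + \outerprod{\varphibf}{\A_{\varphibf,\Sigma_\varphibf}^{-1}(\varphibf)}\, X_{\varphibf} = X_{\varphibf}\, M + M\, X_{\varphibf} = M^{-1} M + M\, M^{-1} = 2 I_p.
\]
Hence $M^{-1}$ indeed solves~\eqref{eq:Xphi}, and by uniqueness $X_{\varphibf} = M^{-1}$. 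Inserting the identity once more into~\eqref{eq:eagrad_raw} produces exactly the claimed formula~\eqref{eq:eagrad}. I expect no hidden difficulty beyond the commutativity step above: without the assumption $M\Sigma_\varphibf = \Sigma_\varphibf M$ the matrix $X_{\varphibf}$ could not be pulled through the shift, the key identity would fail, and no explicit closed form for the energy-adaptive gradient would be available.
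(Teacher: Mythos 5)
Your proposal is correct and follows essentially the same route as the paper's own proof: establish that commutation with $\Sigma_\varphibf$ restores right-associativity of the shifted Hamiltonian for the factor $X_{\varphibf}$, invert to obtain $\A_{\varphibf,\Sigma_\varphibf}^{-1}(\varphibf X_{\varphibf})=\A_{\varphibf,\Sigma_\varphibf}^{-1}(\varphibf)X_{\varphibf}$, verify equation~\eqref{eq:Xphi} via the outer-product multiplication rules and Hermiticity, and conclude by uniqueness together with~\eqref{eq:eagrad_raw}. The only cosmetic difference is that the paper states the inverted associativity identity for arbitrary $\vbf'\in\HHH'$ rather than just the instance needed.
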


\begin{proof}
Let $X_{\varphibf} = \outerprod{\varphibf}{\A_{\varphibf,\Sigma_\varphibf}^{-1}(\varphibf)}^{-1}$. The relation $\outerprod{\varphibf}{\A_{\varphibf,\Sigma_\varphibf}^{-1}(\varphibf)}\Sigma_\varphibf=\Sigma_\varphibf\outerprod{\varphibf}{\A_{\varphibf,\Sigma_\varphibf}^{-1}(\varphibf)}$ implies that $X_{\varphibf}$ also commutes with $\Sigma_\varphibf$. Consequently, for all $\vbf \in \HHH$, we have 
\[
    \A_{\varphibf, \Sigma_\varphibf}(\vbf X_{\varphibf}) 
    = \A_\varphibf( \vbf X_{\varphibf}) - \vbf X_{\varphibf} \Sigma_\varphibf  
    = (\A_\varphibf \vbf) X_{\varphibf} - \vbf\,  \Sigma_\varphibf X_{\varphibf}
    = \A_{\varphibf, \Sigma_\varphibf} (\vbf) X_{\varphibf}.
\]
Since the shifted Hamiltonian $\A_{\varphibf, \Sigma_\varphibf}$ is invertible, for arbitrary $\vbf' \in \HHH'$, this also yields 
\begin{equation}\label{eq:invH}
\A_{\varphibf, \Sigma_\varphibf}^{-1}(\vbf'  X_{\varphibf}) = \A_{\varphibf, \Sigma_\varphibf}^{-1}(\vbf') X_{\varphibf}.
\end{equation}
Therefore, 
\begin{align*}
    \outerprod{ \A_{\varphibf, \Sigma_\varphibf}^{-1}(\varphibf X_{\varphibf})}{\varphibf} + \outerprod{\varphibf}{\A_{\varphibf, \Sigma_\varphibf}^{-1}(\varphibf X_{\varphibf})} 
    & = \outerprod{ \A_{\varphibf, \Sigma_\varphibf}^{-1}(\varphibf) X_{\varphibf}}{\varphibf} + \outerprod{\varphibf}{\A_{\varphibf, \Sigma_\varphibf}^{-1}(\varphibf) X_{\varphibf}} \\ 
    & = X_{\varphibf}\outerprod{ \A_{\varphibf, \Sigma_\varphibf}^{-1}(\varphibf) }{\varphibf} + \outerprod{\varphibf}{\A_{\varphibf, \Sigma_\varphibf}^{-1}(\varphibf)} X_{\varphibf}
    \\&= 2I_p.
\end{align*}
Thus, $X_{\varphibf}$ solves equation \eqref{eq:Xphi}. 
Using \eqref{eq:eagrad_raw} and \eqref{eq:invH}, we finally get \eqref{eq:eagrad}.
\end{proof}

For a coercive Hamiltonian and $\Sigma_\varphibf = 0$, the expression~\eqref{eq:eagrad} of the energy-adaptive Riemannian gradient coincides with that obtained in~\cite{AltPS22}. A straightforward consequence of Corollary~\ref{cor:ea_grad_shift} is that the energy-adaptive Riemannian gradient is efficiently calculated using only one inversion of the shifted Hamiltonian. In the present setting, this is also true for a~uniform shift $\Sigma_\varphibf = -\sigma_\varphibf I_p$ with sufficiently large $\sigma_\varphibf$ that ensures coercivity and invertibility of~$\A_{\varphibf, \Sigma_\varphibf}$.  
Since $\A_{\varphibf, \Sigma_\varphibf}$ (and also $\A_{\varphibf, \Sigma_\varphibf}^{-1}$) acts component-wise, the matrix
$\outerprod{\varphibf}{\A_{\varphibf, \Sigma_\varphibf}^{-1}(\varphibf)}$ is Hermitian, invertible, and obviously commutes with $\Sigma_\varphibf$. Thus, the assumptions of Corollary~\ref{cor:ea_grad_shift} are fulfilled. In the next section, we will show that for more general non-uniform yet practically favorable shifts for which the assumptions of Corollary~\ref{cor:ea_grad_shift} are no longer satisfied, \eqref{eq:eagrad} still provides a sufficiently accurate approximation to the exact energy-adaptive Riemannian gradient.

\subsection{Shifting strategy}\label{sec:shift}
Although appropriate uniform shifts are not that hard to compute, they usually do not yield the best possible results in practice. Overly large shifts degrade performance, while minimal scalar shifts can render the shifted Hamiltonian ill-conditioned, leading to an~inefficient metric and poor convergence in optimization methods. Consequently, more problem-aware and matrix-valued (yet computationally inexpensive) shifts are needed to reduce the computational complexity while maintaining effective preconditioning. 

By using a shift $\Sigma_\varphibf = \Lambda_\varphibf$ with $\Lambda_\varphibf$ defined in \eqref{eq:Lambda}, the operator $\A_{\varphibf,\Sigma_\varphibf}$ can be viewed as an approximation to the Riemannian Hessian of $\,\E$ at $\varphibf \in \St$ with respect to the $\LLL$-metric $\langle \cdot, \cdot \rangle_{\LLL}$ obtained by neglecting the terms in the normal space with respect to $\langle \cdot, \cdot \rangle_{\LLL}$ and also the terms with the second-order derivative of the nonlinearity, see \cite{AltPS24} for details.
As it turns out, this shift does not generally guarantee that the resulting shifted Hamiltonian is coercive on $\Tan_\varphibf\, \St$. To remedy this shortcoming, we consider a~corrected shift 
\[
\Sigma_{\varphibf,\mu} = \Lambda_\varphibf-\mu I_\pp
\]
with a~parameter $\mu > 0$. The following proposition shows that for almost all $\mu > 0$, the shifted Hamiltonian $\A_{\varphibf,\Sigma_{\varphibf,\mu}}$ satisfies Assumption~\ref{asm:shift} as required in the previous subsection.

\begin{prop}\label{prop:shift}
Let $\varphibf_{\star} \in \St$ be a~ground state of $\,\E$ that satisfies the Aufbau principle, and let the associated Lagrange multiplier $\Lambda_{\star}$ 
have the eigenvalues \mbox{$\lambda_1\leq \ldots\leq \lambda_p$}. Then, for all $\mu > 0$, except for $\mu = \lambda_l - \lambda_m$, $1\le m < l \le p$, there exits an~open neighborhood $U_{\varphibf_\star} \subset \St$ of $\varphibf_{\star}$, where for all $\varphibf \in U_{\varphibf_{\star}}$, the shift $\Sigma_{\varphibf,\mu}=\Lambda_\varphibf-\mu I_\pp$ with $\Lambda_\varphibf$ as in~\eqref{eq:Lambda} fulfills Assumption~\textup{\ref{asm:shift}}.
\end{prop}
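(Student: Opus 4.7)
The strategy is to first verify Assumption~\ref{asm:shift} at $\varphibf = \varphibf_\star$ and then extend both properties to an open $\St$-neighborhood by a continuity argument. By the unitary invariance of $\E$ and $\A_\varphibf$ noted after~\eqref{eq:evphk}, I may replace $\varphibf_\star$ with $\varphibf_\star U$ for a unitary $U$ diagonalizing $\Lambda_\star$. After this reduction, $\Lambda_\star = D \coloneqq \diag(\lambda_1,\ldots,\lambda_p)$, the shift becomes $\Sigma_{\varphibf_\star,\mu} = D - \mu I_p$, and by the Aufbau hypothesis the components $\varphi_\star^1,\ldots,\varphi_\star^p$ are eigenfunctions of the component-wise action of $\A_{\varphibf_\star}$ associated with its $p$ smallest eigenvalues $\lambda_1 \leq \cdots \leq \lambda_p \leq \lambda_{p+1} \leq \cdots$.

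For the invertibility of $\A_{\varphibf_\star,\Sigma_{\varphibf_\star,\mu}}\colon\HHH \to \HHH'$, the diagonal shift makes the operator decouple component-wise, with $j$-th component $\A_{\varphibf_\star} - (\lambda_j - \mu)I\colon\HH\to\HH'$. This component is bijective exactly when $\lambda_j - \mu$ is not an eigenvalue of $\A_{\varphibf_\star}$, whose spectrum is discrete on the bounded Lipschitz domain $\Omega$. Since $\mu > 0$ implies $\lambda_j - \mu < \lambda_j \leq \lambda_{p+1}$, the only possible coincidence is $\lambda_j - \mu = \lambda_k$ for some $k \leq p$ with $\lambda_k < \lambda_j$, yielding exactly the excluded set $\{\lambda_l - \lambda_m : 1 \leq m < l \leq p\}$.

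For the $\HHH$-coercivity of $\A_{\varphibf_\star,\Sigma_{\varphibf_\star,\mu}}$ on $\Tan_{\varphibf_\star}\St$, I decompose $\etabf \in \Tan_{\varphibf_\star}\St$ as $\etabf = \varphibf_\star A + \etabf_\perp$, with skew-Hermitian $A = \outerprod{\varphibf_\star}{\etabf}$ (by~\eqref{eq:TanSt}) and $\outerprod{\varphibf_\star}{\etabf_\perp} = 0_p$. Using $\A_{\varphibf_\star}(\varphibf_\star) = \varphibf_\star D$ together with the $\LL$-orthogonality of each $(\etabf_\perp)_j$ to $\varphi_\star^1,\ldots,\varphi_\star^p$, all cross terms in $\langle \A_{\varphibf_\star,\Sigma_{\varphibf_\star,\mu}}(\etabf),\etabf\rangle$ vanish. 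Trace cyclicity combined with $A^\ast = -A$ collapses the vertical contribution to $\mu\|A\|_F^2$, while the spectral theorem plus the Aufbau principle bounds the horizontal contribution below by $\sum_j (\lambda_{p+1} - \lambda_j + \mu)\|(\etabf_\perp)_j\|_\LL^2 \geq \mu\|\etabf_\perp\|_\LLL^2$. To lift this $\LLL$-bound to an $\HHH$-bound I interpolate each horizontal term against Remark~\ref{rem:Garding} by writing $\A_{\varphibf_\star} - (\lambda_j - \mu)I = t(\A_{\varphibf_\star} + \sigma_{\varphibf_\star}I) + (1-t)\A_{\varphibf_\star} - (t\sigma_{\varphibf_\star} + \lambda_j - \mu)I$ and choosing $t > 0$ small enough to keep the $\LL$-coefficient non-negative. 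Finite-dimensional norm equivalence on $\{\varphibf_\star A : A^\ast = -A\}$ gives $\mu\|A\|_F^2 \gtrsim \|\varphibf_\star A\|_\HHH^2$, and the inequality $\|\etabf\|_\HHH^2 \leq 2\|\varphibf_\star A\|_\HHH^2 + 2\|\etabf_\perp\|_\HHH^2$ assembles both pieces into strict $\HHH$-coercivity on $\Tan_{\varphibf_\star}\St$.

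Finally, to extend to a neighborhood I use that the map $\varphibf \mapsto \A_{\varphibf,\Sigma_{\varphibf,\mu}}$ is continuous from $\St$ into the space of bounded linear operators $\HHH \to \HHH'$, since both $\A_\varphibf$ and $\Lambda_\varphibf = \coouterprod{\A_\varphibf\varphibf}{\varphibf}$ depend continuously on $\varphibf$. Invertibility is an open condition in the operator-norm topology, so it persists near $\varphibf_\star$. For coercivity I identify nearby tangent spaces with $\Tan_{\varphibf_\star}\St$ via the smooth $\LLL$-orthogonal projection onto the tangent bundle and transfer the strict coercivity constant, with a small loss, to a sufficiently small neighborhood. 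The main technical obstacle is the $\HHH$-coercivity upgrade of the horizontal part: combining the Aufbau-based spectral bound at the $(p+1)$-st eigenvalue with Gårding's inequality, and then ensuring that all constants remain uniform under simultaneous perturbation of the operator and of the tangent subspace, is the most delicate piece of the argument.
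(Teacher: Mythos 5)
Your proposal follows essentially the same route as the paper's proof: the same horizontal--vertical splitting of tangent vectors, the Aufbau/spectral-gap bound for the horizontal part, the skew-Hermitian trace argument killing the vertical contribution of $\A_{\varphibf_\star,\Lambda_\star}$, the G\aa rding inequality of Remark~\ref{rem:Garding} to upgrade $\LLL$-positivity to $\HHH$-coercivity, the eigenvalue count $\lambda_m-\lambda_l+\mu$ for invertibility, and smooth dependence on $\varphibf$ for the neighborhood. The only substantive difference is the final assembly of the coercivity estimate: you interpolate component-wise between the G\aa rding form and the spectral bound and invoke finite-dimensional norm equivalence on the vertical space $\{\varphibf_\star A\}$, whereas the paper first establishes positive semidefiniteness of $\A_{\varphibf_\star,\Lambda_{\star}}$ on all of $\Tan_{\varphibf_\star}\St$ and then scales by $q=\tilde{\sigma}_{\varphibf_{\star}}/\mu$ to absorb the $\LLL$-term in one step --- both are valid and yield comparable constants.
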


\begin{proof}
Let $\varphibf_{\star} \in \St$ be a ground state with the associated Lagrange multiplier $\Lambda_{\star}$ and let $\mu > 0$. We aim to prove that $\Sigma_{\varphibf_{\star},\mu}=\Lambda_{\star}-\mu I_p$ satisfies  Assumption~\ref{asm:shift}. As the Hamiltonian $\A_{\varphibf, \Sigma_{\varphibf,\mu}}$ depends smoothly on $\varphibf$, the shift $\Sigma_{\varphibf,\mu}$ also satisfies these assumptions for all $\varphibf$ in an~open neighborhood of $\varphibf_{\star}$. 

First, we show that there exists a~uniform shift of $\A_{\varphibf_{\star},\Lambda_{\star}}$ enabling the coercivity on $\HHH$. For any $\vbf\in\HHH$, we have
\[
\lambda_1\| \vbf \|_{\LLL}^2 
= \langle \lambda_1 \vbf, \vbf  \rangle_{\LLL}
\le \langle \vbf \Lambda_{\star}, \vbf  \rangle_{\LLL} 
\le \langle \lambda_p \vbf, \vbf \rangle  
= \lambda_p\|\vbf \|_{\LLL}^2.
\]
Then using the shift $\sigma_{\varphibf_{\star}}$ as in Remark~\ref{rem:Garding}, for $\tilde{\sigma}_{\varphibf_{\star}}= \sigma_{\varphibf_{\star}} + \lambda_p > 0$, we obtain that
\begin{equation}\label{eq:garding2}
    \langle \A_{\varphibf_{\star}, \Lambda_{\star}}(\vbf) , \vbf \rangle + \tilde{\sigma}_{\varphibf_{\star}}\|\vbf\|_{\LLL}^2 
    = \langle \A_{\varphibf_{\star}}\vbf,\vbf\rangle +  \sigma_{\varphibf_{\star}} \|\vbf\|_{\LLL}^2
    - (\langle \vbf\,\Lambda_{\star}, \vbf \rangle_{\LLL} - \lambda_p \|\vbf\|_{\LLL}^2) \ge \|\vbf\|_{\HHH}^2.
\end{equation}
We note that $\tilde{\sigma}_{\varphibf_{\star}} > 0$, since 
$\A_{\varphibf_{\star}, -\sigma_{\varphibf_{\star}} I_p}$ is coercive and thus only has positive eigenvalues. Consequently, 
$\sigma_{\varphibf_{\star}} + \lambda_1 > 0$, which is a~lower bound of $\tilde \sigma_{\varphibf_{\star}}$.
   
Next, we prove that $\A_{\varphibf_{\star},\Lambda_{\star}}$ is positive semidefinite on $\Tan_{\varphibf_{\star}} \St$. To this end, for any $\etabf \in \Tan_{\varphibf_{\star}} \St$, we consider the horizontal-vertical additive decomposition $\etabf = \xibf + \varphibf_{\star} M$, where $M \in \Field^{\pp \times \pp}$ is skew-Hermitian and $\xibf$ is in the horizontal space, meaning $\outerprod{\varphibf_{\star}}{\xibf} = 0_p$, see~\cite{AltPS24}. As $\varphibf_{\star}$ is a critical point of $\,\E$, it solves the NLEVP~\eqref{eq:evphk}. Then using
\[
\A_{\varphibf_{\star}, \Lambda_{\star}}(\varphibf_{\star} M) = \A_{\varphibf_{\star}} \varphibf_{\star} M - \varphibf_{\star} M \Lambda_{\star}  = \varphibf_{\star} (\Lambda_{\star} M - M \Lambda_{\star}),
\]
we calculate
\begin{align*}
    \langle \A_{\varphibf_{\star}, \Lambda_{\star}}(\xibf) , \xibf \rangle 
    &= \langle \A_{\varphibf_{\star}} \xibf, \xibf \rangle - \langle \xibf \Lambda_{\star}, \xibf \rangle_{\LLL}
    \ge (\lambda_{p+1} - \lambda_p) \|\xibf\|_{\LLL}^2 \ge 0, \\
         \langle \A_{\varphibf_{\star}, \Lambda_{\star}}(\varphibf_{\star} M) , \xibf \rangle 
         &= \langle \varphibf_{\star}(\Lambda_{\star} M \!-\! M\Lambda_{\star}), \xibf \rangle_{\LLL}
         = \re\big(\trace{\big((\Lambda_{\star} M \!-\! M\Lambda_{\star})^\ast \outerprod{\varphibf_{\star}}{\xibf}\big)}\big)\! = 0, \\
         \langle \A_{\varphibf_{\star}, \Lambda_{\star}}(\varphibf_{\star} M) , \varphibf_{\star} M \rangle 
         &=  \langle \varphibf_{\star} (\Lambda_{\star} M\! -\! M \Lambda_{\star}), \varphibf_{\star} M \rangle_{\LLL}
        = \re\big(\trace{\big((\Lambda_{\star} M\! -\! M\Lambda_{\star})^\ast M\big)}\big) = 0.
    \end{align*}
    Here, we used the properties that $\Lambda_{\star} M - M\Lambda_{\star}$ is Hermitian and that the trace of the product of the Hermitian and skew-Hermitian matrices is imaginary. Thus, we obtain that  
    \begin{align*}
        \langle \A_{\varphibf_{\star}, \Lambda_{\star}}(\etabf) , \etabf \rangle &= \langle \A_{\varphibf_{\star}, \Lambda_{\star}}(\xibf) , \xibf \rangle + 2\langle \A_{\varphibf_{\star}, \Lambda_{\star}}(\varphibf_{\star} M) , \xibf \rangle + \langle \A_{\varphibf_{\star}, \Lambda_{\star}}(\varphibf_{\star} M) , \varphibf_{\star} M \rangle \ge 0.
    \end{align*}
    
We now show that $\A_{\varphibf_{\star},\Sigma_{\varphibf_\star,\mu}}$ is coercive on $\Tan_{\varphibf_{\star}} \St$.  For $\mu \geq \tilde{\sigma}_{\varphibf_{\star}}$, the coercivity immediately follows from~\eqref{eq:garding2}. For $\mu < \tilde{\sigma}_{\varphibf_{\star}}$,  define $q \coloneqq \tilde{\sigma}_{\varphibf_{\star}}\mu^{-1} > 1$. Then for all \mbox{$\etabf \in \Tan_{\varphibf_{\star}} \St$}, we have
    \begin{align*}
        q\langle \A_{\varphibf_{\star}, \Sigma_{\varphibf_\star,\mu}} (\etabf), \etabf \rangle 
        &= q\langle \A_{\varphibf_{\star}, \Lambda_{\star}} (\etabf), \etabf \rangle + q \mu \|\etabf\|_{\LLL}^2
        > \langle \A_{\varphibf_{\star}, \Lambda_{\star}} (\etabf), \etabf \rangle + \tilde{\sigma}_{\varphibf_{\star}} \|\etabf\|_{\LLL}^2
        \ge \|\etabf\|_{\HHH}^2,
    \end{align*}
    where we used \eqref{eq:garding2} once more. Dividing by $q$ yields the coercivity of~$\A_{\varphibf_{\star},\Sigma_{\varphibf_\star,\mu}}$ on $\Tan_{\varphibf_{\star}} \St$ with the coercivity constant $q^{-1}$.
    
    In order to show the invertibility of $\A_{\varphibf_{\star},\Sigma_{\varphibf_\star,\mu}}$ on $\HHH$, we consider the eigenvalues of~$\A_{\varphibf_{\star}, \Sigma_{\varphibf_\star,\mu}}$ given by $\lambda_m - \lambda_l + \mu$ for $l = 1, \dots, p$ and $m = 1, 2, \ldots$.
    Consequently, $\A_{\varphibf_{\star}, \Sigma_{\varphibf_\star,\mu}}$ is invertible if $0<\mu \neq \lambda_l - \lambda_m$ for $1\le m<l \le p$.
\end{proof}

We see directly from the proof that without correction ($\mu = 0$), the shifted Hamiltonian $\A_{\varphibf, \Lambda_\varphibf}$ is not positive definite on the vertical space. A corrected shift $\Sigma_{\varphibf,\mu}$ with $\mu>0$ ensures the coercivity of the shifted Hamiltonian $\A_{\varphibf,\Sigma_{\varphibf,\mu}}$ on the whole tangent space $\Tan_\varphibf\,\St$, and larger values of $\mu$ grow the neighborhood of $\varphibf_\star$, where $\A_{\varphibf, \Sigma_{\varphibf, \mu}}$ is coercive on the respective tangent space. The choice of $\mu$ can thus be seen as a~trade-off between Hessian approximation (small $\mu$) and the coercivity assurance (large $\mu$). In our experiments, starting with a sufficiently good initial guess, $\mu = 0.01$ proved to be a good choice if the residual is sufficiently small. 

\begin{rem}\label{rem:inexact_calc}
    In general, the assumptions on $\outerprod{\varphibf} {\A_{\varphibf, \Sigma_{\varphibf, \mu}}^{-1}(\varphibf)}$ in Corollary~\textup{\ref{cor:ea_grad_shift}} do not hold for the corrected shift~$\Sigma_{\varphibf, \mu}$. However, if the residual is sufficiently small, then
    \begin{align*}
        \outerprod{\varphibf} {\A_{\varphibf, \Sigma_{\varphibf, \mu}}^{-1}(\varphibf)} \approx (\Lambda_\varphibf - \Sigma_{\varphibf, \mu})^{-1} = \mu^{-1} I_p
    \end{align*}
    with $\big\|\outerprod{\varphibf} {\A_{\varphibf, \Sigma_{\varphibf, \mu}}^{-1}(\varphibf)} - \mu^{-1}I_p\big\|_F = \mathcal O(\|\mathrm{res}\,\E(\varphibf)\|_{\HHH'}^2)$, where $\|\cdot\|_F$ denotes the Frobenius matrix norm and $\|\cdot\|_{\HHH'}$ is the dual norm. As~$\mu^{-1} I_p$ is Hermitian, invertible and commutes with~$\Sigma_{\varphibf,\mu}$, we expect the error resulting from the approximation of the energy-adaptive Riemannian gradient by \eqref{eq:eagrad} to be of the order of magnitude of $\|\mathrm{res}\,\E(\varphibf)\|_{\HHH'}^2$. Thus, this approximation is sufficiently accurate, when the relative tolerance for inverting the shifted Hamiltonian is in the range of $\|\mathrm{res}\,\E(\varphibf)\|_{\HHH'}^2$.
\end{rem}

Choosing the energy-adaptive metric \(g_{\ea{\varphibf}{\Sigma_{\varphibf,\mu}}}\) in Algorithm~\ref{alg:rcg} and using the right-hand side of \eqref{eq:eagrad} to approximately compute the energy-adaptive Riemannian gradient, we obtain the EARCG method to solve the Kohn-Sham energy minimization problem \eqref{eq:prob_e_min}. This method is presented in Algorithm~\ref{alg:earcg}.

\begin{algorithm2e}[th]
\SetKwInOut{Input}{input}
\SetCustomAlgoRuledWidth{\textwidth}
\SetAlgoLined
\Input{initial guess $\varphibf^{(0)} \in \St$, retraction $\R_\varphibf$, vector transport  $\T^\R$, shift~correction~$\mu > 0$, initial step size $\tau^{(-1)}=1$}
\BlankLine
$\Lambda^{(0)} = \coouterprod{\A_{\varphibf^{(0)}} \varphibf^{(0)}}{\varphibf^{(0)}}$\;
$\rbf^{(0)} = \A_{\varphibf^{(0)}} \varphibf^{(0)} - \varphibf^{(0)} \Lambda^{(0)}$  \tcc*{residual}  
$\Sigma_\mu^{(0)} = \Lambda^{(0)} - \mu I_p$  \tcc*{shift}
solve $\A_{\varphibf^{(0)}, \Sigma^{(0)}_\mu}(\xbf^{(0)})=\varphibf^{(0)}$ for $\xbf^{(0)}$\;
$\gbf^{(0)}= \varphibf^{(0)} - \xbf^{(0)}\outerprod{\varphibf^{(0)}}{\xbf^{(0)}}^{-1}$ \;
$\etabf^{(0)} = - \gbf^{(0)}$  \tcc*{search direction} 
 \For{$m=0, 1, 2, \dots$ until convergence}{
 compute the step size $\tau^{(m)}$ using Algorithm~\ref{alg:mod_secant_step} with the initial guess $\tau^{(m-1)}$ \;
    $\varphibf^{(m+1)} = \R^{}_{\varphibf^{(m)}}(\tau^{(m)} \etabf^{(m)})$\;
    $\Lambda^{(m+1)} = \coouterprod{\A_{\varphibf^{(m+1)}} \varphibf^{(m+1)}}{\varphibf^{(m+1)}}$\;
    $\rbf^{(m+1)} = \A_{\varphibf^{(m+1)}} \varphibf^{(m+1)} - \varphibf^{(m+1)} \Lambda^{(m+1)}$ \; 
    $\Sigma_\mu^{(m+1)} = \Lambda^{(m+1)} - \mu I_p $\;
    solve $\A_{\varphibf^{(m+1)}, \Sigma^{(m+1)}_\mu}(\xbf^{(m+1)})=\varphibf^{(m+1)}$ for $\xbf^{(m+1)}$\;
    $\gbf^{(m+1)}= \varphibf^{(m+1)} - \xbf^{(m+1)}\outerprod{\varphibf^{(m+1)}}{\xbf^{(m+1)}}^{-1}$ \;
    $\beta^{(m+1)} = \max\bigg\{0, \min\Big\{\frac{\langle \rbf^{(m+1)},\, \gbf^{(m+1)} \rangle}{\langle \rbf^{(m)},\, \gbf^{(m)} \rangle}, \frac{\langle \rbf^{(m+1)},\, \gbf^{(m+1)} - \T^{\R}_{\tau^{(m)} \eta^{(m)}} (\gbf^{(m)}) \rangle_{}}{\langle \rbf^{(m)},\, \gbf^{(m)} \rangle_{}}\Big\}\bigg\}$\;
    $\etabf^{(m+1)} = - \gbf^{(m+1)} + \beta^{(m+1)} \T^{\R}_{\tau^{(m)} \etabf^{(m)}} (\etabf^{(m)})$\; 
 }
 \KwRet{$\varphibf^{(m+1)}$}
 \caption{\scalebox{1}{Energy-adaptive RCG for the Kohn-Sham energy minimization problem}\!\!\!\!\!\!}
     \label{alg:earcg}
\end{algorithm2e}

In this algorithm, the step size strategy and the calculation of CG parameters make heavy use of \eqref{eq:calc_metric_g}. The computation of the residual $\rbf=\mathrm{res}\,\E(\varphibf)$ and the duality pair $\langle \cdot, \cdot \rangle$ involves one application of the Hamiltonian, which is much cheaper than calculating the Riemannian gradient in the energy-adaptive metric and then applying this metric, which requires one application and one inversion of the shifted Hamiltonian operator.

\section{Numerical experiments}\label{sec:experiments} 
The presented algorithms have been implemented in Julia using the Density Functional Toolkit \texttt{DFTK.jl}\footnote{\texttt{https://docs.dftk.org}} \cite{DFTKjcon}. The source code is available at \begin{center}
\texttt{https://github.com/jonas-pueschel/RCG\_DFTK}
\end{center}
We first briefly describe the discretization used in \texttt{DFTK}, then discuss the choice of parameters for EARCG and the other state-of-the-art methods tested, and finally compare them in numerical experiments.

\subsection{Plane-wave discretization} \label{sec:PlaneWave}
Let us start with a brief overview of the discretization used for periodic quantum systems, as implemented in \texttt{DFTK}, among others. For more details, the interested reader is referred to \cite[Chapter III]{Lev20}. For the spatial discretization of the Kohn-Sham model, the plane-wave discretization method with an $L^2$-orthonormal Fourier basis is used, where the number of degrees of freedom $n$ is determined by a cutoff for the kinetic energy $E_{\rm cut}>0$. 
A $\pp$-frame $\vbf$ is then represented by a~matrix $V \in \Field^{n\times \pp}$.
Due to the orthonormality of the Fourier basis, the (complex)
outer product of $V, W \in \Field^{n\times \pp}$ is given by $V^\ast W \in \Field ^{\pp\times \pp}$,
and the (real) inner product coincides with the (real) Frobenius inner product $\langle V,W \rangle_F = \re\big(\mathrm{tr} (V^\ast  W)\big)$ which induces the Frobenius matrix norm $\|V\|_F=\sqrt{\langle V,V \rangle_F}$.
Thus, the discrete version of the Stiefel manifold reads 
\[
\mathrm{St}(\pp,n) = \big\{\varPhi\in\Field^{n\times \pp}\enskip :\enskip \varPhi^\ast \varPhi=I_p\big\}.
\]
Since all plane-wave basis functions are eigenfunctions of the Laplacian $\Delta$, this discretization allows us to use the (shifted) kinetic energy operator $-\tfrac{1}{2}\Delta + \alpha \I$ with the canonical identification $\I:\HHH\hookrightarrow\HHH'$ and $\alpha>0$ as a~preconditioner, since it is a diagonal matrix in the Fourier basis. It is commonly referred to as the Teter-Payne-Allan (TPA) preconditioner \cite{TPA89} and is the standard preconditioner used for plane-wave DFT~\cite{CiCP-18-167}. 

When considering the periodic structures, a $k$-point grid of size $k_1\times k_2\times k_3$ is used. This means that a virtual $k_1\times k_2\times k_3$ supercell of the lattice is constructed and solutions invariant under different grid-translation operators are considered separately.
The total number of $k$-points~$K$ is $k_1k_2k_3$ divided by the symmetries of the lattice. Consequently, the discrete state admits the form $\varPhi=[\varPhi_1^\ast, \ldots,\varPhi_K^\ast  ]^\ast  $ with $\varPhi_j\in \Field^{n_j\times p}$ for $j=1,\ldots, K$, where~$n_j$ is the number of degrees of freedom at the $j$-th $k$-point. The total number of degrees of freedom is $n=n_1+\ldots+n_k$. The discrete Kohn-Sham energy minimization problem is then formulated on the product manifold 
\begin{equation}\label{eq:ProdMan}
\mathrm{St}(\pp,n_1) \times \dots \times \mathrm{St}(\pp,n_K)
\end{equation}
for which all geometric concepts can be derived from those on the Stiefel manifolds $\mathrm{St}(\pp,n_j)$; see \cite{GaoPY23}. 
Furthermore, it follows from Bloch's theorem \cite{Bloch1929} that the discrete Hamiltonian $H_\varPhi$ has the block diagonal structure with diagonal blocks corresponding to $k$-points. Thus, the energy-adaptive metric can be defined as a~sum of the $k$-point-wise energy-adaptive metrics using the corresponding Hamiltonian blocks. In this case, the theory for the Stiefel manifold $\St$ discussed in the sections above can be adapted to the product manifold~\eqref{eq:ProdMan}.

\subsection{Experimental setup}
Here we outline how the RCG schemes and other methods are explicitly set up for subsequent numerical experiments. Unless otherwise specified, our choice of the retraction, vector transport, CG parameters, and step size for RCG are consistent with the corresponding suggestions in Section~\ref{sec:rcg}. For the step size parameters we explicitly choose
\[
\delta = 0.05, \quad\sigma = 0.1, \quad\gamma = 0.5, \quad\epsilon = 10^{-12}.
\]
Since the scaling of the vector transport to prevent length growth involves an additional application of the Hamiltonian operator, we omit it for the sake of efficiency. In our numerical experiments, we did not observe any adverse effects due to the possible length increase of the transported direction.
     
The initial guess $\varPhi^{(0)}$ is the same for all methods and is generated by running SCF until the density difference satisfies $\|\rho^{(j+1)} - \rho^{(j)}\|_2 < 0.1$. 
For the energy-adaptive metric, we use the shift matrix $\Sigma_{\varPhi, \mu}=\varPhi^*H_\varPhi\,\varPhi -\mu I_p$ (cf.~Proposition~\ref{prop:shift}), where we choose the correction parameter $\mu = 0.01$. To compute the inexact energy-adaptive Riemannian gradient, we use the expression \eqref{eq:eagrad} whose discrete counterpart is
\[
G = \varPhi - X (\varPhi^\ast X)^{-1}, 
\]
where $X \in \Field ^{n\times p}$ is the solution to the matrix Sylvester equation
\begin{equation} \label{eq:Sylv}
H_\varPhi X - X \Sigma_{\varPhi,\mu} = \varPhi.
\end{equation}
This equation is solved approximately using the preconditioned block full orthogonalization method (FOM) from Algorithm~\ref{alg:pfom} given in Appendix~\ref{app:FOM}. The FOM iteration is terminated when the relative residual für \eqref{eq:Sylv} satisfies the condition
\[
\frac{\|H_\varPhi X_j - X_j \Sigma_{\varPhi,\mu}  - \varPhi\|_F}{\|H_\varPhi X_0 - X_0 \Sigma_{\varPhi,\mu} - \varPhi\|_F} \le 
2.5 \cdot 10^{-2},
\]
where the initial guess is given by $X_0 = \varPhi(\varPhi^*H_\varPhi\,\varPhi - \Sigma_{\varPhi,\mu})^{-1}=\mu^{-1} \varPhi$, a modified version of \cite{AltPS22} that accounts for the shift. Since in our experiments, $\varPhi^{(0)}$ generated by the strategy discussed above is sufficiently close to a local minimizer, this tolerance fulfills the condition in \mbox{Remark}~\ref{rem:inexact_calc}.
As a~preconditioner, we use the TPA preconditioner \cite{TPA89} as introduced in Section~\ref{sec:PlaneWave}, where $\alpha$ is chosen as the orbital-wise mean kinetic energy, an idea from \cite[Section IV]{TPA89}, which is already implemented in \texttt{DFTK}.

The following RCG and RGD variants have been tested:
\begin{itemize}    
    \item EARCG\quad energy-adaptive RCG (Algorithm~\ref{alg:earcg});
    \item EARGD\quad energy-adaptive RGD with the Barzilai-Borwein step size and non-mono\-to\-ne backtracking as in \cite[Algorithm~2]{AltPS22};
    \item L2RCG\quad (unpreconditioned) $L^2$-gradient RCG with the approximated Hessian step size \cite{DaiLZZ17}, comparable to the methods given in \cite{DaiLZZ17, LuoWR24, Oviedo2018}.
\end{itemize}
Moreover, we compared them to the following state-of-the-art methods implemented in \texttt{DFTK}:
\begin{itemize}
    \item SCF\quad self-consistent field iteration with (optimized) default parameters including LDOS based mixing, Anderson acceleration, and adaptive bands;
    \item LBFGS\quad direct constraint minimization utilizing \texttt{LBFGS} from \texttt{Optim.jl} \cite{Optim} and the Hager-Zhang step size implemented in \texttt{LineSearches.jl} and using the TPA-precon\-di\-tio\-ned residual.
\end{itemize}

\subsection{Performance comparison}
Comparing different methods in the field of electronic structure calculations is notoriously difficult \cite{CKL22}, since these methods may have different criteria governing their convergence behavior. Therefore, our goal is not to give a comprehensive performance evaluation of EARCG, but to find framework conditions under which it may be viable compared to already established methods. Thus, we limit ourselves to a set of rather small lattice solid-state materials and consider the number of applications of the Hamiltonian operator and CPU time as cost measures. We find that for smaller lattices with only a few electrons, our method performs very well. For systems with more electrons and larger lattices, we expect the performance advantage to decrease due to long-range Coulomb interactions that degrade the quality of the descent direction. In addition, the (inaccurate) calculation of the energy-adaptive Riemannian gradient becomes more expensive as the number of electrons increases.

We compare the performance of the different methods by applying them to three solid-state materials for which we perform zero-temperature ground state calculations:
    \begin{itemize}%[leftmargin=15.5mm]
    \item Silicon standard crystalline lattice, LDA model, $6\times6\times 6$ $k$-point grid, $E_{cut}=50\,\mathrm{Ha}$, lattice constant $10.26$~Bohrs, $n=72912$, $p = 4$, HOMO-LUMO gap $0.02310$;
    \item GaAs periodic lattice for the FCC phase, LDA model, $2\times2\times2$ $k$-point grid, \linebreak \mbox{$E_{cut}=60\,\mathrm{Ha}$}, lattice constant $10.68$ Bohrs, $n = 20295$, $p = 4$, HOMO-LUMO gap $0.01315$;
    \item TiO$_2$ MP-2657 configuration in the primitive cell, LDA model, $2\times2\times2$ $k$-point grid, $E_{cut}=60\,\mathrm{Ha}$, $n=57893$, $p = 16$, HOMO-LUMO gap $0.06398$.
\end{itemize}
The silicon model, taken from the \texttt{DFTK} documentation \cite{DFTKjcon}, is a well-established benchmark in semiconductor simulation and provides a reliable baseline for performance comparisons. The GaAs and TiO\textsubscript{2} models, taken from the implementation in \cite{CDK21},
are more demanding with increased computational complexity and richer electronic structure.

\begin{figure}
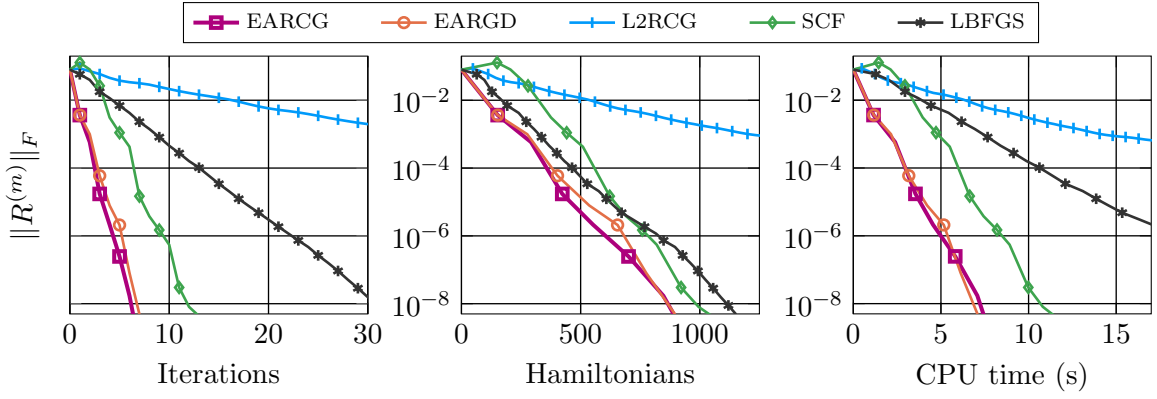
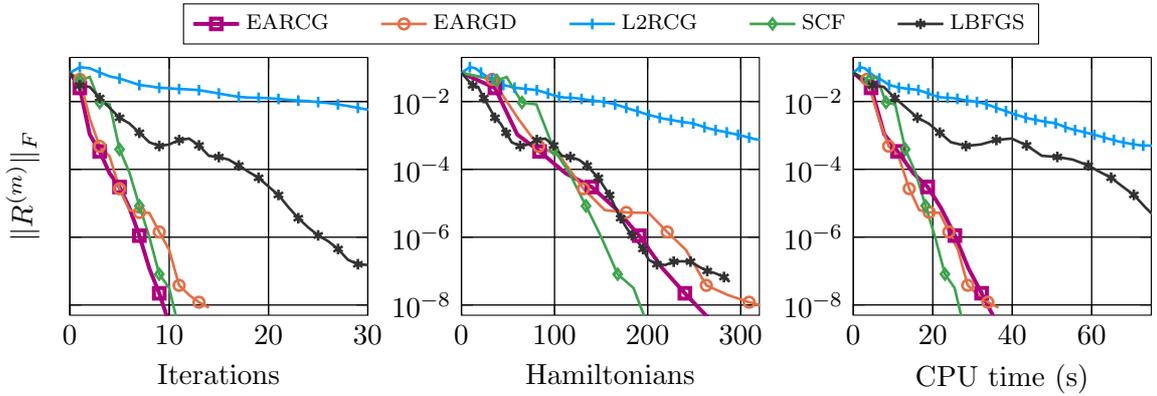
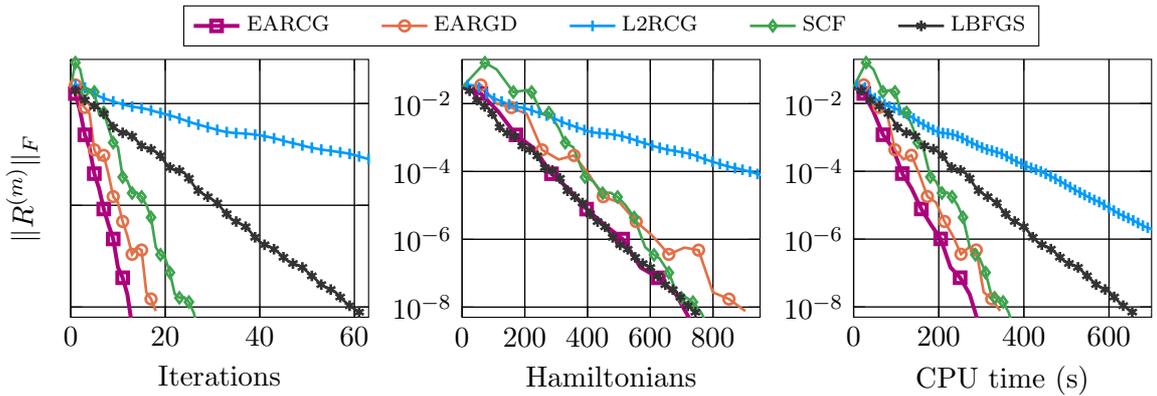

\begin{subfigure}{\textwidth}
\centering
    \begin{tikzpicture}[/tikz/background rectangle/.style={fill={rgb,1:red,1.0;green,1.0;blue,1.0}, fill opacity={1.0}, draw opacity={1.0}}, show background rectangle]
        % Recommended preamble:
% \usetikzlibrary{arrows.meta}
% \usetikzlibrary{backgrounds}
% \usepgfplotslibrary{patchplots}
% \usepgfplotslibrary{fillbetween}
% \pgfplotsset{%
%     layers/standard/.define layer set={%
%         background,axis background,axis grid,axis ticks,axis lines,axis tick labels,pre main,main,axis descriptions,axis foreground%
%     }{
%         grid style={/pgfplots/on layer=axis grid},%
%         tick style={/pgfplots/on layer=axis ticks},%
%         axis line style={/pgfplots/on layer=axis lines},%
%         label style={/pgfplots/on layer=axis descriptions},%
%         legend style={/pgfplots/on layer=axis descriptions},%
%         title style={/pgfplots/on layer=axis descriptions},%
%         colorbar style={/pgfplots/on layer=axis descriptions},%
%         ticklabel style={/pgfplots/on layer=axis tick labels},%
%         axis background@ style={/pgfplots/on layer=axis background},%
%         3d box foreground style={/pgfplots/on layer=axis foreground},%
%     },
% }

\begin{axis}[
name = silicon1,
at={(0.0mm,0.0mm)},
width={\plotwidth}, height={\plotheight}, 
xlabel={Iterations}, 
x tick style={color={rgb,1:red,0.0;green,0.0;blue,0.0}, opacity={1.0}}, 
%x tick label style={font={{\fontsize{8 pt}{11 pt}\selectfont}}, color={rgb,1:red,0.0;green,0.0;blue,0.0}, draw opacity={1.0}, rotate={0.0}}, 
xmin={0}, xmax={30}, 
xtick align={inside}, 
xmajorgrids={true}, 
x grid style={color={rgb,1:red,0.0;green,0.0;blue,0.0}, draw opacity={0.1}, line width={0.5}, solid}, 
%axis x line*={left}, 
%x axis line style={color={rgb,1:red,0.0;green,0.0;blue,0.0}, draw opacity={1.0}, line width={1}, solid}, 
scaled y ticks={false}, 
ylabel={$\|R^{(m)}\|_F$}, 
ylabel style={font={{\fontsize{11 pt}{14.3 pt}\selectfont}}, color={rgb,1:red,0.0;green,0.0;blue,0.0}, draw opacity={1.0}, rotate={0.0}}, 
ymode={log}, log basis y={10}, 
ymin={5e-9}, ymax={0.2}, 
yticklabels=\empty, %yticklabels={{$10^{-8}$,$10^{-6}$,$10^{-4}$,$10^{-2}$}}, 
ytick={{1.0e-8,1.0e-6,0.0001,0.01}}, 
ytick align={inside}, 
%y tick style={color={rgb,1:red,0.0;green,0.0;blue,0.0}, opacity={1.0}}, 
yticklabel style={font={{\fontsize{8 pt}{10.4 pt}\selectfont}}, color={rgb,1:red,0.0;green,0.0;blue,0.0}, draw opacity={1.0}, rotate={0.0}}, 
ymajorgrids={true},
y grid style={color={rgb,1:red,0.0;green,0.0;blue,0.0}, draw opacity={0.1}, line width={0.5}, solid}, 
%axis y line*={left}, 
%y axis line style={color={rgb,1:red,0.0;green,0.0;blue,0.0}, draw opacity={1.0}, line width={1}, solid}, 
colorbar={false}
]

\addplot[color=unia-purple, draw opacity={1.0}, line width={1.5}, solid, mark=square, mark repeat=2,mark phase=2]
table[row sep={\\}]
{
    \\
    0 0.07952763828001923 \\
1 0.003640662762086863 \\
2 0.000581085190163284 \\
3 1.7274551062258736e-5 \\
4 2.1426977415352274e-6 \\
5 2.4725573628428215e-7 \\
6 1.720944897604908e-8 \\
7 5.460654298077895e-10 \\
};

\addplot[color=cpl1, draw opacity={1.0}, line width={1}, solid, mark=o, mark repeat=2,mark phase=2]
table[row sep={\\}]
{
    \\
    0 0.07952763828001923 \\
1 0.003640662762086863 \\
2 0.0009963442919841965 \\
3 5.94749658471245e-5 \\
4 7.896543742392376e-6 \\
5 2.09488566270105e-6 \\
6 8.088654121203195e-8 \\
7 4.894990587913975e-9 \\
};

    \addplot[color=cpl2, draw opacity={1.0}, line width={1}, solid, mark=|, mark repeat=2, mark phase=2]
table[row sep={\\}]
      {
    \\
    0 0.07952763828001923 \\
1 0.08692498693968086 \\
2 0.0717820452237402 \\
3 0.05923506635383784 \\
4 0.04568088364542617 \\
5 0.037876706561778724 \\
6 0.03403750128353163 \\
7 0.03175569126264674 \\
8 0.028856473192861446 \\
9 0.02478629669947618 \\
10 0.02139382587341938 \\
11 0.018547730622794464 \\
12 0.016355641841361162 \\
13 0.01457968032720044 \\
14 0.013156095936498073 \\
15 0.011806136943779659 \\
16 0.010516765732818657 \\
17 0.009068800529681504 \\
18 0.007678115610052347 \\
19 0.0065720757049602995 \\
20 0.005761981138397605 \\
21 0.005198642692827996 \\
22 0.004797646194898956 \\
23 0.004373982843415201 \\
24 0.003917324558918743 \\
25 0.003469697284755493 \\
26 0.0030329596817565163 \\
27 0.0026905604886783836 \\
28 0.0024067473081049538 \\
29 0.0021667377031614403 \\
30 0.001977807200160992 \\
31 0.001788141079149819 \\
32 0.0016386920712307478 \\
33 0.0015098267597637605 \\
34 0.0013631774036882858 \\
35 0.0012274405099960074 \\
36 0.001107728308850681 \\
37 0.001009907187053467 \\
38 0.0009414032232838619 \\
39 0.0008896781857227639 \\
40 0.000848003083252443 \\
41 0.0008189791263007692 \\
42 0.0007801739105197104 \\
43 0.0007347632979382547 \\
44 0.0006907706912073304 \\
45 0.0006553451865587153 \\
46 0.0006172023317776181 \\
47 0.000583119917488437 \\
48 0.0005506190722714602 \\
49 0.0005087748608089967 \\
50 0.00046550057485192494 \\
51 0.00041625173909033057 \\
52 0.00037612461637198685 \\
53 0.0003404890901074389 \\
54 0.0003116239494612655 \\
55 0.00028459625127900696 \\
56 0.00026050484711647446 \\
57 0.00023925203777669894 \\
58 0.00021504972436908631 \\
59 0.0001929232750592563 \\
60 0.00017191214036453822 \\
61 0.0001539199905697635 \\
62 0.0001394210052777111 \\
63 0.00012447804781728054 \\
64 0.00011212023356855171 \\
65 0.00010111624205482681 \\
66 9.159436668269509e-5 \\
67 8.315159911794635e-5 \\
68 7.568068513103377e-5 \\
69 6.91803791065654e-5 \\
70 6.274615909435801e-5 \\
71 5.672641264041971e-5 \\
72 5.213093726056359e-5 \\
73 4.900397457591007e-5 \\
74 4.631368331908214e-5 \\
75 4.388550590231333e-5 \\
76 4.059820221843185e-5 \\
77 3.7513847584416435e-5 \\
78 3.52230614189692e-5 \\
79 3.2728384197680434e-5 \\
80 3.110628390624551e-5 \\
81 2.893320320737647e-5 \\
82 2.7032008576532294e-5 \\
83 2.5142326335536702e-5 \\
84 2.3207619373365465e-5 \\
85 2.1536498090057516e-5 \\
86 2.0151022075160345e-5 \\
87 1.8649318044088496e-5 \\
88 1.744165380442338e-5 \\
89 1.6216129087192333e-5 \\
90 1.4957607830768397e-5 \\
91 1.3529304506863862e-5 \\
92 1.2377570838467982e-5 \\
93 1.1260865310442447e-5 \\
94 1.0114031309798533e-5 \\
95 9.152729152529875e-6 \\
96 8.088924868743557e-6 \\
97 7.476138922229348e-6 \\
98 6.948221192284723e-6 \\
99 6.32207151309887e-6 \\
100 5.810182666295277e-6 \\
};

    \addplot[color=cpl3, draw opacity={1.0}, line width={1}, solid, mark=diamond, mark repeat=2, mark phase=2]
table[row sep={\\}]
{
    \\
    0 0.07952763828001923 \\
1 0.12893725317567875 \\
2 0.08054244118795088 \\
3 0.026600558687113734 \\
4 0.0030954432418888733 \\
5 0.0011033984790520708 \\
6 0.000425203289829024 \\
7 1.4815006321120977e-5 \\
8 3.7793131707142958e-6 \\
9 1.4887599971880954e-6 \\
10 5.474512799013683e-7 \\
11 3.007635431783204e-8 \\
12 8.068317825399006e-9 \\
13 4.304868154996554e-9 \\
};

    \addplot[color=cpl4, draw opacity={1.0}, line width={1}, solid, mark=asterisk, mark repeat=2, mark phase=2]
table[row sep={\\}]
{
    \\0 0.07952763828001923 \\
1 0.058329767116307926 \\
2 0.039798892409636244 \\
3 0.017805048515566677 \\
4 0.011528001940082815 \\
5 0.006948851130300159 \\
6 0.004233476436237086 \\
7 0.0023182806030574835 \\
8 0.0014205282915731611 \\
9 0.0008150285950397912 \\
10 0.0004548815823840144 \\
11 0.00027603513489712005 \\
12 0.00016030650103703606 \\
13 0.00010244497311644544 \\
14 5.925955266030251e-5 \\
15 3.4340292534034424e-5 \\
16 2.0877631354642103e-5 \\
17 1.2385292559899828e-5 \\
18 7.651625450363235e-6 \\
19 4.802576794391758e-6 \\
20 3.007914473851147e-6 \\
21 1.8628917432171206e-6 \\
22 1.171449846388138e-6 \\
23 7.354057375278773e-7 \\
24 4.699879769780677e-7 \\
25 2.661941890748651e-7 \\
26 1.5985203908809032e-7 \\
27 9.234193837237451e-8 \\
28 5.07289986634458e-8 \\
29 2.869390629966284e-8 \\
30 1.5604452373072072e-8 \\
31 8.888796301223622e-9 \\
32 4.866734199222215e-9 \\
};
\end{axis}

        % Recommended preamble:
% \usetikzlibrary{arrows.meta}
% \usetikzlibrary{backgrounds}
% \usepgfplotslibrary{patchplots}
% \usepgfplotslibrary{fillbetween}
% \pgfplotsset{%
%     layers/standard/.define layer set={%
%         background,axis background,axis grid,axis ticks,axis lines,axis tick labels,pre main,main,axis descriptions,axis foreground%
%     }{
%         grid style={/pgfplots/on layer=axis grid},%
%         tick style={/pgfplots/on layer=axis ticks},%
%         axis line style={/pgfplots/on layer=axis lines},%
%         label style={/pgfplots/on layer=axis descriptions},%
%         legend style={/pgfplots/on layer=axis descriptions},%
%         title style={/pgfplots/on layer=axis descriptions},%
%         colorbar style={/pgfplots/on layer=axis descriptions},%
%         ticklabel style={/pgfplots/on layer=axis tick labels},%
%         axis background@ style={/pgfplots/on layer=axis background},%
%         3d box foreground style={/pgfplots/on layer=axis foreground},%
%     },
% }

\begin{axis}[
name = silicon2,
at={(51.5mm,0.0mm)}, %at={(silicon1.south east)},
width={\plotwidth}, height={\plotheight},
xlabel={Hamiltonians}, 
xmin={0}, xmax={1250}, 
xticklabels={{$0$,$500$,$1000$}}, 
xtick={{0,500,1000}}, 
xtick align={inside}, 
x tick style={color={rgb,1:red,0.0;green,0.0;blue,0.0}, opacity={1.0}}, 
xmajorgrids={true}, 
x grid style={color={rgb,1:red,0.0;green,0.0;blue,0.0}, draw opacity={0.1}, line width={0.5}, solid}, 
%axis x line*={left}, 
%x axis line style={color={rgb,1:red,0.0;green,0.0;blue,0.0}, draw opacity={1.0}, line width={1}, solid}, 
scaled y ticks={false}, 
ymode={log}, log basis y={10}, ymajorgrids={true}, 
ymin={5e-9}, ymax={0.2},  ytick={{1.0e-8,1.0e-6,0.0001,0.01}}, 
ytick align={inside}, 
yticklabels={{$10^{-8}$,$10^{-6}$,$10^{-4}$,$10^{-2}$}}, %yticklabel={\empty},
ytick={{1.0e-8,1.0e-6,0.0001,0.01}}, 
y tick style={color={rgb,1:red,0.0;green,0.0;blue,0.0}, opacity={1.0}}, 
%y tick label style={color={rgb,1:red,0.0;green,0.0;blue,0.0}, opacity={1.0}, rotate={0}}, 
y grid style={color={rgb,1:red,0.0;green,0.0;blue,0.0}, draw opacity={0.1}, line width={0.5}, solid}, 
%axis y line*={left}, 
%y axis line style={color={rgb,1:red,0.0;green,0.0;blue,0.0}, draw opacity={1.0}, line width={1}, solid}, 
colorbar={false}
]
\addplot[color=unia-purple, draw opacity={1.0}, line width={1.5}, solid, mark=square, mark repeat=2,mark phase=2]
table[row sep={\\}]
{
    \\
    0.0 0.07952763828001923 \\
152.0 0.003640662762086863 \\
292.0 0.000581085190163284 \\
423.0 1.7274551062258736e-5 \\
552.0 2.1426977415352274e-6 \\
701.0 2.4725573628428215e-7 \\
847.0 1.720944897604908e-8 \\
967.0 5.460654298077895e-10 \\
};

\addplot[color=cpl1, draw opacity={1.0}, line width={1}, solid, mark=o, mark repeat=2,mark phase=2]
table[row sep={\\}]
{
    \\
    0.0 0.07952763828001923 \\
152.0 0.003640662762086863 \\
276.0 0.0009963442919841965 \\
403.0 5.94749658471245e-5 \\
536.0 7.896543742392376e-6 \\
655.0 2.09488566270105e-6 \\
775.0 8.088654121203195e-8 \\
896.0 4.894990587913975e-9 \\
};

    \addplot[color=cpl2, draw opacity={1.0}, line width={1}, solid, mark=|, mark repeat=2, mark phase=2]
table[row sep={\\}]
      {
    \\
    0.0 0.07952763828001923 \\
48.0 0.08692498693968086 \\
80.0 0.0717820452237402 \\
112.0 0.05923506635383784 \\
144.0 0.04568088364542617 \\
176.0 0.037876706561778724 \\
208.0 0.03403750128353163 \\
240.0 0.03175569126264674 \\
272.0 0.028856473192861446 \\
304.0 0.02478629669947618 \\
336.0 0.02139382587341938 \\
368.0 0.018547730622794464 \\
400.0 0.016355641841361162 \\
432.0 0.01457968032720044 \\
464.0 0.013156095936498073 \\
496.0 0.011806136943779659 \\
528.0 0.010516765732818657 \\
560.0 0.009068800529681504 \\
592.0 0.007678115610052347 \\
624.0 0.0065720757049602995 \\
656.0 0.005761981138397605 \\
688.0 0.005198642692827996 \\
720.0 0.004797646194898956 \\
752.0 0.004373982843415201 \\
784.0 0.003917324558918743 \\
816.0 0.003469697284755493 \\
848.0 0.0030329596817565163 \\
880.0 0.0026905604886783836 \\
912.0 0.0024067473081049538 \\
944.0 0.0021667377031614403 \\
976.0 0.001977807200160992 \\
1008.0 0.001788141079149819 \\
1040.0 0.0016386920712307478 \\
1072.0 0.0015098267597637605 \\
1104.0 0.0013631774036882858 \\
1136.0 0.0012274405099960074 \\
1168.0 0.001107728308850681 \\
1200.0 0.001009907187053467 \\
1232.0 0.0009414032232838619 \\
1264.0 0.0008896781857227639 \\
1296.0 0.000848003083252443 \\
1328.0 0.0008189791263007692 \\
1360.0 0.0007801739105197104 \\
1392.0 0.0007347632979382547 \\
1424.0 0.0006907706912073304 \\
1456.0 0.0006553451865587153 \\
1488.0 0.0006172023317776181 \\
1520.0 0.000583119917488437 \\
1552.0 0.0005506190722714602 \\
1584.0 0.0005087748608089967 \\
1616.0 0.00046550057485192494 \\
1648.0 0.00041625173909033057 \\
1680.0 0.00037612461637198685 \\
1712.0 0.0003404890901074389 \\
1744.0 0.0003116239494612655 \\
1776.0 0.00028459625127900696 \\
1808.0 0.00026050484711647446 \\
1840.0 0.00023925203777669894 \\
1872.0 0.00021504972436908631 \\
1904.0 0.0001929232750592563 \\
1936.0 0.00017191214036453822 \\
1968.0 0.0001539199905697635 \\
2000.0 0.0001394210052777111 \\
2032.0 0.00012447804781728054 \\
2064.0 0.00011212023356855171 \\
2096.0 0.00010111624205482681 \\
2128.0 9.159436668269509e-5 \\
2160.0 8.315159911794635e-5 \\
2192.0 7.568068513103377e-5 \\
2224.0 6.91803791065654e-5 \\
2256.0 6.274615909435801e-5 \\
2288.0 5.672641264041971e-5 \\
2320.0 5.213093726056359e-5 \\
2352.0 4.900397457591007e-5 \\
2384.0 4.631368331908214e-5 \\
2416.0 4.388550590231333e-5 \\
2448.0 4.059820221843185e-5 \\
2480.0 3.7513847584416435e-5 \\
2512.0 3.52230614189692e-5 \\
2544.0 3.2728384197680434e-5 \\
2576.0 3.110628390624551e-5 \\
2608.0 2.893320320737647e-5 \\
2640.0 2.7032008576532294e-5 \\
2672.0 2.5142326335536702e-5 \\
2704.0 2.3207619373365465e-5 \\
2736.0 2.1536498090057516e-5 \\
2768.0 2.0151022075160345e-5 \\
2800.0 1.8649318044088496e-5 \\
2832.0 1.744165380442338e-5 \\
2864.0 1.6216129087192333e-5 \\
2896.0 1.4957607830768397e-5 \\
2928.0 1.3529304506863862e-5 \\
2960.0 1.2377570838467982e-5 \\
2992.0 1.1260865310442447e-5 \\
3024.0 1.0114031309798533e-5 \\
3056.0 9.152729152529875e-6 \\
3088.0 8.088924868743557e-6 \\
3120.0 7.476138922229348e-6 \\
3152.0 6.948221192284723e-6 \\
3184.0 6.32207151309887e-6 \\
3216.0 5.810182666295277e-6 \\
};

    \addplot[color=cpl3, draw opacity={1.0}, line width={1}, solid, mark=diamond, mark repeat=2, mark phase=2]
table[row sep={\\}]
{
    \\
    0.0 0.07952763828001923 \\
149.5 0.12893725317567875 \\
205.5 0.08054244118795088 \\
279.25 0.026600558687113734 \\
375.75 0.0030954432418888733 \\
441.5 0.0011033984790520708 \\
509.5 0.000425203289829024 \\
622.0 1.4815006321120977e-5 \\
691.5 3.7793131707142958e-6 \\
758.75 1.4887599971880954e-6 \\
819.25 5.474512799013683e-7 \\
921.5 3.007635431783204e-8 \\
995.0 8.068317825399006e-9 \\
1051.0 4.304868154996554e-9 \\
};

    \addplot[color=cpl4, draw opacity={1.0}, line width={1}, solid, mark=asterisk, mark repeat=2, mark phase=2]
table[row sep={\\}]
{
    \\0.0 0.07952763828001923 \\
64.0 0.058329767116307926 \\
96.0 0.039798892409636244 \\
128.0 0.017805048515566677 \\
160.0 0.011528001940082815 \\
192.0 0.006948851130300159 \\
240.0 0.004233476436237086 \\
272.0 0.0023182806030574835 \\
304.0 0.0014205282915731611 \\
336.0 0.0008150285950397912 \\
368.0 0.0004548815823840144 \\
400.0 0.00027603513489712005 \\
432.0 0.00016030650103703606 \\
464.0 0.00010244497311644544 \\
496.0 5.925955266030251e-5 \\
528.0 3.4340292534034424e-5 \\
576.0 2.0877631354642103e-5 \\
608.0 1.2385292559899828e-5 \\
640.0 7.651625450363235e-6 \\
672.0 4.802576794391758e-6 \\
720.0 3.007914473851147e-6 \\
768.0 1.8628917432171206e-6 \\
816.0 1.171449846388138e-6 \\
848.0 7.354057375278773e-7 \\
896.0 4.699879769780677e-7 \\
928.0 2.661941890748651e-7 \\
960.0 1.5985203908809032e-7 \\
992.0 9.234193837237451e-8 \\
1024.0 5.07289986634458e-8 \\
1056.0 2.869390629966284e-8 \\
1088.0 1.5604452373072072e-8 \\
1120.0 8.888796301223622e-9 \\
1152.0 4.866734199222215e-9 \\
};
\end{axis}

        % Recommended preamble:
% \usetikzlibrary{arrows.meta}
% \usetikzlibrary{backgrounds}
% \usepgfplotslibrary{patchplots}
% \usepgfplotslibrary{fillbetween}
% \pgfplotsset{%
%     layers/standard/.define layer set={%
%         background,axis background,axis grid,axis ticks,axis lines,axis tick labels,pre main,main,axis descriptions,axis foreground%
%     }{
%         grid style={/pgfplots/on layer=axis grid},%
%         tick style={/pgfplots/on layer=axis ticks},%
%         axis line style={/pgfplots/on layer=axis lines},%
%         label style={/pgfplots/on layer=axis descriptions},%
%         legend style={/pgfplots/on layer=axis descriptions},%
%         title style={/pgfplots/on layer=axis descriptions},%
%         colorbar style={/pgfplots/on layer=axis descriptions},%
%         ticklabel style={/pgfplots/on layer=axis tick labels},%
%         axis background@ style={/pgfplots/on layer=axis background},%
%         3d box foreground style={/pgfplots/on layer=axis foreground},%
%     },
% }

\begin{axis}[
name = silicon3, 
at={(103.0mm,0.0mm)}, %at={(silicon2.south east)},
width={\plotwidth}, height={\plotheight},
legend cell align={left}, legend columns={5},
legend style={color={rgb,1:red,0.0;green,0.0;blue,0.0}, draw opacity={1.0}, 
solid, fill={rgb,1:red,1.0;green,1.0;blue,1.0}, fill opacity={1.0}, text opacity={1.0}, font={{\fontsize{8 pt}{10.4 pt}\selectfont}}, text={rgb,1:red,0.0;green,0.0;blue,0.0}, cells={anchor={center}}, at={(-2.25, 1.05)}, anchor={south west}},
xlabel={CPU time (s)}, 
xmin={0}, xmax={17}, xtick align={inside}, 
x tick style={color={rgb,1:red,0.0;green,0.0;blue,0.0}, opacity={1.0}}, 
xmajorgrids={true}, 
x grid style={color={rgb,1:red,0.0;green,0.0;blue,0.0}, draw opacity={0.1}, line width={0.5}, solid}, 
%axis x line*={left}, 
%x axis line style={color={rgb,1:red,0.0;green,0.0;blue,0.0}, draw opacity={1.0}, line width={1}, solid}, 
scaled y ticks={false}, 
ymode={log}, log basis y={10}, ymin={5e-9}, ymax={0.2}, 
y tick style={color={rgb,1:red,0.0;green,0.0;blue,0.0}, opacity={1.0}}, 
%y tick label style={color={rgb,1:red,0.0;green,0.0;blue,0.0}, opacity={1.0}, rotate={0}}, 
yticklabels={{$10^{-8}$,$10^{-6}$,$10^{-4}$,$10^{-2}$}}, %yticklabel={\empty},
ytick={{1.0e-8,1.0e-6,0.0001,0.01}}, 
ytick align={inside}, 
ymajorgrids={true},
y grid style={color={rgb,1:red,0.0;green,0.0;blue,0.0}, draw opacity={0.1}, line width={0.5}, solid}, 
%axis y line*={left}, 
%y axis line style={color={rgb,1:red,0.0;green,0.0;blue,0.0}, draw opacity={1.0}, line width={1}, solid}, 
colorbar={false}
]
\addplot[color=unia-purple, draw opacity={1.0}, line width={1.5}, solid, mark=square, mark repeat=2,mark phase=2]
table[row sep={\\}]
{
    \\
    0.0 0.07952763828001923 \\
1.15425931 0.003640662762086863 \\
2.384392548 0.000581085190163284 \\
3.5463923480000004 1.7274551062258736e-5 \\
4.544797846000001 2.1426977415352274e-6 \\
5.814445795 2.4725573628428215e-7 \\
7.075908412 1.720944897604908e-8 \\
8.033343654000001 5.460654298077895e-10 \\
};
\addlegendentry {EARCG$ \qquad$}

\addplot[color=cpl1, draw opacity={1.0}, line width={1}, solid, mark=o, mark repeat=2,mark phase=2]
table[row sep={\\}]
{
    \\
    0.0 0.07952763828001923 \\
1.168001611 0.003640662762086863 \\
2.147860381 0.0009963442919841965 \\
3.165431995 5.94749658471245e-5 \\
4.207980257 7.896543742392376e-6 \\
5.161953094 2.09488566270105e-6 \\
6.122767699000001 8.088654121203195e-8 \\
7.089030559 4.894990587913975e-9 \\
};
\addlegendentry {EARGD$ \qquad$}

    \addplot[color=cpl2, draw opacity={1.0}, line width={1}, solid, mark=|, mark repeat=2, mark phase=2]
table[row sep={\\}]
      {
    \\
    0.0 0.07952763828001923 \\
0.48118933500000005 0.08692498693968086 \\
0.859881911 0.0717820452237402 \\
1.2296436590000002 0.05923506635383784 \\
1.600387519 0.04568088364542617 \\
1.9703317880000002 0.037876706561778724 \\
2.339164588 0.03403750128353163 \\
2.71000406 0.03175569126264674 \\
3.073628229 0.028856473192861446 \\
3.441436626 0.02478629669947618 \\
3.8344872820000004 0.02139382587341938 \\
4.201792354 0.018547730622794464 \\
4.568625143 0.016355641841361162 \\
5.147950428000001 0.01457968032720044 \\
5.511721586 0.013156095936498073 \\
5.877029933 0.011806136943779659 \\
6.243013521 0.010516765732818657 \\
6.614787062 0.009068800529681504 \\
6.980165468 0.007678115610052347 \\
7.35070227 0.0065720757049602995 \\
7.722946329000001 0.005761981138397605 \\
8.096370697000001 0.005198642692827996 \\
8.468337225 0.004797646194898956 \\
8.841446326 0.004373982843415201 \\
9.211931332 0.003917324558918743 \\
9.583809529 0.003469697284755493 \\
9.960764593 0.0030329596817565163 \\
10.327851936 0.0026905604886783836 \\
10.704807146 0.0024067473081049538 \\
11.076302153 0.0021667377031614403 \\
11.447889 0.001977807200160992 \\
11.817985781 0.001788141079149819 \\
12.188215591 0.0016386920712307478 \\
12.562847402000001 0.0015098267597637605 \\
12.937708708 0.0013631774036882858 \\
13.313563674000001 0.0012274405099960074 \\
13.682819843 0.001107728308850681 \\
14.054714982 0.001009907187053467 \\
14.426335470000001 0.0009414032232838619 \\
14.798546149000002 0.0008896781857227639 \\
15.169537179 0.000848003083252443 \\
15.536511484000002 0.0008189791263007692 \\
15.910000983000002 0.0007801739105197104 \\
16.278965024 0.0007347632979382547 \\
16.649225065 0.0006907706912073304 \\
17.019297327 0.0006553451865587153 \\
17.392852299 0.0006172023317776181 \\
17.767249485 0.000583119917488437 \\
18.135871853 0.0005506190722714602 \\
18.50760073 0.0005087748608089967 \\
18.877823857000003 0.00046550057485192494 \\
19.246764759 0.00041625173909033057 \\
19.616843185 0.00037612461637198685 \\
19.987062096000002 0.0003404890901074389 \\
20.356577823000002 0.0003116239494612655 \\
20.729255259000002 0.00028459625127900696 \\
21.100137699 0.00026050484711647446 \\
21.469510686 0.00023925203777669894 \\
21.839467273 0.00021504972436908631 \\
22.210596992000003 0.0001929232750592563 \\
22.579983864000003 0.00017191214036453822 \\
22.950595647 0.0001539199905697635 \\
23.318502292 0.0001394210052777111 \\
23.688220774 0.00012447804781728054 \\
24.057834864 0.00011212023356855171 \\
24.427582035 0.00010111624205482681 \\
24.802361437000002 9.159436668269509e-5 \\
25.174100698 8.315159911794635e-5 \\
25.542954075 7.568068513103377e-5 \\
25.910465369 6.91803791065654e-5 \\
26.278478080000003 6.274615909435801e-5 \\
26.648898473000003 5.672641264041971e-5 \\
27.019911433 5.213093726056359e-5 \\
27.387872269000002 4.900397457591007e-5 \\
27.759728318 4.631368331908214e-5 \\
28.130193666 4.388550590231333e-5 \\
28.503808079000002 4.059820221843185e-5 \\
28.873923311000002 3.7513847584416435e-5 \\
29.247473588000002 3.52230614189692e-5 \\
29.618305135000004 3.2728384197680434e-5 \\
29.991413498000004 3.110628390624551e-5 \\
30.362345577000003 2.893320320737647e-5 \\
30.733760865 2.7032008576532294e-5 \\
31.110930955 2.5142326335536702e-5 \\
31.481636268000003 2.3207619373365465e-5 \\
31.85325508 2.1536498090057516e-5 \\
32.223864446 2.0151022075160345e-5 \\
32.592545543 1.8649318044088496e-5 \\
32.961028752000004 1.744165380442338e-5 \\
33.327487462 1.6216129087192333e-5 \\
33.698802026 1.4957607830768397e-5 \\
34.074831652 1.3529304506863862e-5 \\
34.444629648 1.2377570838467982e-5 \\
34.816766771000005 1.1260865310442447e-5 \\
35.186353044 1.0114031309798533e-5 \\
35.556850789 9.152729152529875e-6 \\
35.925668975 8.088924868743557e-6 \\
36.295950834 7.476138922229348e-6 \\
36.666238564000004 6.948221192284723e-6 \\
37.040742890000004 6.32207151309887e-6 \\
37.408343924 5.810182666295277e-6 \\
};
\addlegendentry {L2RCG$ \qquad$}

    \addplot[color=cpl3, draw opacity={1.0}, line width={1}, solid, mark=diamond, mark repeat=2, mark phase=2]
table[row sep={\\}]
{
    \\
    0.0 0.07952763828001923 \\
1.444919987 0.12893725317567875 \\
2.1218558030000003 0.08054244118795088 \\
2.946389674 0.026600558687113734 \\
3.959060474 0.0030954432418888733 \\
4.718925863 0.0011033984790520708 \\
5.502516249 0.000425203289829024 \\
6.640613004 1.4815006321120977e-5 \\
7.4326631590000005 3.7793131707142958e-6 \\
8.200089995 1.4887599971880954e-6 \\
8.917947538 5.474512799013683e-7 \\
9.983463593 3.007635431783204e-8 \\
10.812061446000001 8.068317825399006e-9 \\
11.488485679 4.304868154996554e-9 \\
};
\addlegendentry {SCF$ \qquad$}

    \addplot[color=cpl4, draw opacity={1.0}, line width={1}, solid, mark=asterisk, mark repeat=2, mark phase=2]
table[row sep={\\}]
{
    \\0.0 0.07952763828001923 \\
1.27855662 0.058329767116307926 \\
2.017738617 0.039798892409636244 \\
2.947656405 0.017805048515566677 \\
3.6926095890000004 0.011528001940082815 \\
4.4325698870000005 0.006948851130300159 \\
5.4342625 0.004233476436237086 \\
6.176282348 0.0023182806030574835 \\
6.92116001 0.0014205282915731611 \\
7.662349559000001 0.0008150285950397912 \\
8.400206427 0.0004548815823840144 \\
9.143888273 0.00027603513489712005 \\
9.882410440000001 0.00016030650103703606 \\
10.6188374 0.00010244497311644544 \\
11.361204616 5.925955266030251e-5 \\
12.099862086 3.4340292534034424e-5 \\
13.092234534000001 2.0877631354642103e-5 \\
13.857567581000001 1.2385292559899828e-5 \\
14.599399586 7.651625450363235e-6 \\
15.343597844000001 4.802576794391758e-6 \\
16.336439575 3.007914473851147e-6 \\
17.329352447 1.8628917432171206e-6 \\
18.337092235 1.171449846388138e-6 \\
19.077394807 7.354057375278773e-7 \\
20.075177273 4.699879769780677e-7 \\
20.818117869 2.661941890748651e-7 \\
21.556509803 1.5985203908809032e-7 \\
22.296458657000002 9.234193837237451e-8 \\
23.03643238 5.07289986634458e-8 \\
23.773756198 2.869390629966284e-8 \\
24.509850996 1.5604452373072072e-8 \\
25.249938406000002 8.888796301223622e-9 \\
25.996992528000003 4.866734199222215e-9 \\
};
\addlegendentry {LBFGS}
\end{axis}
   \end{tikzpicture}
    \caption{Silicon model. EARGD and EARCG outperform the other three methods  with respect to all considered criteria. L2RCG exhibits the poorest performance.
    }
    \label{fig:plots-silicon}\vspace*{5mm}
\end{subfigure}
\begin{subfigure}{\textwidth}
\centering
    \begin{tikzpicture}[/tikz/background rectangle/.style={fill={rgb,1:red,1.0;green,1.0;blue,1.0}, fill opacity={1.0}, draw opacity={1.0}}, show background rectangle]
         % Recommended preamble:
% \usetikzlibrary{arrows.meta}
% \usetikzlibrary{backgrounds}
% \usepgfplotslibrary{patchplots}
% \usepgfplotslibrary{fillbetween}
% \pgfplotsset{%
%     layers/standard/.define layer set={%
%         background,axis background,axis grid,axis ticks,axis lines,axis tick labels,pre main,main,axis descriptions,axis foreground%
%     }{
%         grid style={/pgfplots/on layer=axis grid},%
%         tick style={/pgfplots/on layer=axis ticks},%
%         axis line style={/pgfplots/on layer=axis lines},%
%         label style={/pgfplots/on layer=axis descriptions},%
%         legend style={/pgfplots/on layer=axis descriptions},%
%         title style={/pgfplots/on layer=axis descriptions},%
%         colorbar style={/pgfplots/on layer=axis descriptions},%
%         ticklabel style={/pgfplots/on layer=axis tick labels},%
%         axis background@ style={/pgfplots/on layer=axis background},%
%         3d box foreground style={/pgfplots/on layer=axis foreground},%
%     },
% }

\begin{axis}[
name = gaas1,
width={\plotwidth}, height={\plotheight}, 
at={(0.0mm,0.0mm)},
xlabel={Iterations}, 
xmin={0}, xmax={30},
xtick align={inside}, 
x tick style={color={rgb,1:red,0.0;green,0.0;blue,0.0}, opacity={1.0}}, 
xmajorgrids={true},
x grid style={color={rgb,1:red,0.0;green,0.0;blue,0.0}, draw opacity={0.1}, line width={0.5}, solid}, 
%axis x line*={left}, 
%x axis line style={color={rgb,1:red,0.0;green,0.0;blue,0.0}, draw opacity={1.0}, line width={1}, solid}, 
scaled y ticks={false}, 
ylabel={$\|R^{(m)}\|_F$}, 
ylabel style={font={{\fontsize{11 pt}{14.3 pt}\selectfont}}, color={rgb,1:red,0.0;green,0.0;blue,0.0}, draw opacity={1.0}, rotate={0.0}}, 
ymode={log}, log basis y={10}, ymin={5e-9}, ymax={0.2}, 
yticklabels=\empty, %yticklabels={{$10^{-8}$,$10^{-6}$,$10^{-4}$,$10^{-2}$}}, 
ytick={{1.0e-8,1.0e-6,0.0001,0.01}}, 
ytick align={inside}, 
y tick style={color={rgb,1:red,0.0;green,0.0;blue,0.0}, opacity={1.0}}, 
ymajorgrids={true}, 
y grid style={color={rgb,1:red,0.0;green,0.0;blue,0.0}, draw opacity={0.1}, line width={0.5}, solid}, 
%axis y line*={left}, 
%y axis line style={color={rgb,1:red,0.0;green,0.0;blue,0.0}, draw opacity={1.0}, line width={1}, solid}, 
colorbar={false}
]

\addplot[color=unia-purple, draw opacity={1.0}, line width={1.5}, solid, mark=square, mark repeat=2,mark phase=2]
table[row sep={\\}]
{
    \\
    0 0.07000296681563047 \\
1 0.025222327832397175 \\
2 0.001066748436174278 \\
3 0.0003379265519915209 \\
4 7.614973087021073e-5 \\
5 3.0162737758155897e-5 \\
6 7.194624412576693e-6 \\
7 1.110659943256904e-6 \\
8 1.1953473416072252e-7 \\
9 2.212524755767015e-8 \\
10 2.8408499624486333e-9 \\
};

\addplot[color=cpl1, draw opacity={1.0}, line width={1}, solid, mark=o, mark repeat=2,mark phase=2]
table[row sep={\\}]
{
    \\
    0 0.07000296681563047 \\
1 0.04513600631521441 \\
2 0.003120130347895161 \\
3 0.0004790301555918699 \\
4 0.00024161924537239034 \\
5 2.682119408275478e-5 \\
6 6.27879255157347e-6 \\
7 5.3727777437361545e-6 \\
8 5.198228375870221e-6 \\
9 1.427335547748887e-6 \\
10 4.2470299297959264e-7 \\
11 3.802338665981345e-8 \\
12 1.983546489346481e-8 \\
13 1.2073003724602435e-8 \\
14 8.34204272018569e-9 \\
};

    \addplot[color=cpl2, draw opacity={1.0}, line width={1}, solid, mark=|, mark repeat=2, mark phase=2]
table[row sep={\\}]
      {
    \\
    0 0.07000296681563047 \\
1 0.10338939827931397 \\
2 0.09568018323737128 \\
3 0.07173069570111118 \\
4 0.05526079985941775 \\
5 0.04774125929341334 \\
6 0.0385584201364543 \\
7 0.030658266616156303 \\
8 0.028287891299678247 \\
9 0.02551624163687259 \\
10 0.024645068215622977 \\
11 0.023435210632089885 \\
12 0.022769500656120715 \\
13 0.021708464521560177 \\
14 0.01944544990095817 \\
15 0.01726949416770217 \\
16 0.015276410592385917 \\
17 0.013768652857253549 \\
18 0.013100500287487582 \\
19 0.012879754345531918 \\
20 0.012563690752547577 \\
21 0.011956341595869262 \\
22 0.011240636009028518 \\
23 0.010664340435771427 \\
24 0.010322150302167989 \\
25 0.009625775102526487 \\
26 0.009051190385046546 \\
27 0.008226628656806304 \\
28 0.007381320202832916 \\
29 0.006508390374756113 \\
30 0.005759112729582509 \\
31 0.0051915999823872955 \\
32 0.004558447663149401 \\
33 0.0040357583514895375 \\
34 0.0036901070200236457 \\
35 0.0033084553160707434 \\
36 0.0030696973316451455 \\
37 0.0028934088318896783 \\
38 0.002690826202277596 \\
39 0.0025062213407462805 \\
40 0.0023903226288457405 \\
41 0.0022580528604272234 \\
42 0.001994003305058301 \\
43 0.0017513448503942417 \\
44 0.001573078768371541 \\
45 0.0014372593227758507 \\
46 0.0013379894013392166 \\
47 0.0012299554090857855 \\
48 0.0011377531743433326 \\
49 0.0010438134411413231 \\
50 0.0009536095380278964 \\
51 0.0008644025710301415 \\
52 0.0007849667581718376 \\
53 0.000729240833790316 \\
54 0.0006569027744637063 \\
55 0.000620512338875396 \\
56 0.0005800561605557 \\
57 0.0005515549150921965 \\
58 0.0005212421838933913 \\
59 0.0004973641813835729 \\
60 0.000502958980748215 \\
61 0.0004888247902383571 \\
62 0.00047963993636104374 \\
63 0.00048220556079589543 \\
64 0.0004650918251180437 \\
65 0.00045409847003410555 \\
66 0.00047200355036010496 \\
67 0.0004887775048912001 \\
68 0.000523341768908648 \\
69 0.0005347899472286706 \\
70 0.0005587371431846693 \\
71 0.0005854513553119494 \\
72 0.000607992921416189 \\
73 0.0006098073496057505 \\
74 0.000624449903758525 \\
75 0.0006354376033621258 \\
76 0.0006580912500037923 \\
77 0.0007149233576171275 \\
78 0.0007515348551311185 \\
79 0.0007682078343547905 \\
80 0.0008054951984012285 \\
81 0.0008274343853036825 \\
82 0.0008196077387927248 \\
83 0.0007842870273372841 \\
84 0.0007135311600999446 \\
85 0.0006341080338987864 \\
86 0.0005907887010358888 \\
87 0.0005629322899463482 \\
88 0.0005454121615644804 \\
89 0.0005144460006779876 \\
90 0.0004769794649274936 \\
91 0.0004380241827068045 \\
92 0.00041245998202327666 \\
93 0.0003854918146005144 \\
94 0.0003620462998249835 \\
95 0.00033190408070762754 \\
96 0.0003048978330550381 \\
97 0.00027873719719727623 \\
98 0.0002557178109524686 \\
99 0.00023869573483783482 \\
100 0.00023390008995705878 \\
};

    \addplot[color=cpl3, draw opacity={1.0}, line width={1}, solid, mark=diamond, mark repeat=2, mark phase=2]
table[row sep={\\}]
{
    \\
    0 0.07000296681563047 \\
1 0.04176287642392045 \\
2 0.05298818744782466 \\
3 0.00939403913998692 \\
4 0.008221196264033979 \\
5 0.00039080152680717273 \\
6 7.95277562181964e-5 \\
7 8.354925487958839e-6 \\
8 1.0226485599333353e-6 \\
9 8.099338846952489e-8 \\
10 3.175480430858788e-8 \\
11 1.9125879682153224e-9 \\
12 7.197061122621676e-10 \\
};

    \addplot[color=cpl4, draw opacity={1.0}, line width={1}, solid, mark=asterisk, mark repeat=2, mark phase=2]
table[row sep={\\}]
{
    \\0 0.07000296681563047 \\
1 0.029759963614647906 \\
2 0.026942415175809607 \\
3 0.012706078949551811 \\
4 0.006552589310215914 \\
5 0.0033242105127793647 \\
6 0.0022509461329942404 \\
7 0.001201463984899357 \\
8 0.0006290088009380205 \\
9 0.0004882928932446588 \\
10 0.0005424312627564232 \\
11 0.0007308203862130496 \\
12 0.0008088683249002766 \\
13 0.0005083013673964215 \\
14 0.00025000773436973745 \\
15 0.00023132900089011934 \\
16 0.00019262111910996793 \\
17 0.0001297761055415724 \\
18 8.428882926342223e-5 \\
19 5.6406963254074596e-5 \\
20 3.109220954531755e-5 \\
21 1.7590814851474398e-5 \\
22 7.694522317772446e-6 \\
23 3.6350551263051523e-6 \\
24 1.9370022535695948e-6 \\
25 1.1305461507220519e-6 \\
26 7.645438219982155e-7 \\
27 4.487475735904333e-7 \\
28 2.1563137715693183e-7 \\
29 1.5953395981834576e-7 \\
30 1.5147471525857652e-7 \\
31 1.8969176155485392e-7 \\
32 1.9495361561110883e-7 \\
33 1.9073890909670553e-7 \\
34 1.3507306086596036e-7 \\
35 1.0101305613219568e-7 \\
36 8.571911863942167e-8 \\
37 6.689554606536155e-8 \\
38 4.8278642577977574e-8 \\
};
\end{axis}

        % Recommended preamble:
% \usetikzlibrary{arrows.meta}
% \usetikzlibrary{backgrounds}
% \usepgfplotslibrary{patchplots}
% \usepgfplotslibrary{fillbetween}
% \pgfplotsset{%
%     layers/standard/.define layer set={%
%         background,axis background,axis grid,axis ticks,axis lines,axis tick labels,pre main,main,axis descriptions,axis foreground%
%     }{
%         grid style={/pgfplots/on layer=axis grid},%
%         tick style={/pgfplots/on layer=axis ticks},%
%         axis line style={/pgfplots/on layer=axis lines},%
%         label style={/pgfplots/on layer=axis descriptions},%
%         legend style={/pgfplots/on layer=axis descriptions},%
%         title style={/pgfplots/on layer=axis descriptions},%
%         colorbar style={/pgfplots/on layer=axis descriptions},%
%         ticklabel style={/pgfplots/on layer=axis tick labels},%
%         axis background@ style={/pgfplots/on layer=axis background},%
%         3d box foreground style={/pgfplots/on layer=axis foreground},%
%     },
% }

\begin{axis}[
name = gaas2,
at={(51.5mm,0.0mm)}, %at={(gaas1.south east)},
width={\plotwidth}, height={\plotheight},
xlabel={Hamiltonians}, 
xmin={0}, xmax={320}, 
xtick align={inside}, 
x tick style={color={rgb,1:red,0.0;green,0.0;blue,0.0}, opacity={1.0}}, 
%x tick label style={font={{\fontsize{8 pt}{10.4 pt}\selectfont}}, color={rgb,1:red,0.0;green,0.0;blue,0.0}, draw opacity={1.0}, rotate={0.0}}, 
xmajorgrids={true}, 
x grid style={color={rgb,1:red,0.0;green,0.0;blue,0.0}, draw opacity={0.1}, line width={0.5}, solid}, 
%axis x line*={left}, 
%x axis line style={color={rgb,1:red,0.0;green,0.0;blue,0.0}, draw opacity={1.0}, line width={1}, solid}, 
ymode={log}, log basis y={10},
ymin={5e-9}, ymax={0.2}, 
scaled y ticks={false}, 
yticklabels={{$10^{-8}$,$10^{-6}$,$10^{-4}$,$10^{-2}$}}, 
ytick={{1.0e-8,1.0e-6,0.0001,0.01}}, 
ytick align={inside}, 
y tick style={color={rgb,1:red,0.0;green,0.0;blue,0.0}, opacity={1.0}}, 
%y tick label style={color={rgb,1:red,0.0;green,0.0;blue,0.0}, opacity={1.0}, rotate={0}}, 
ymajorgrids={true}, 
y grid style={color={rgb,1:red,0.0;green,0.0;blue,0.0}, draw opacity={0.1}, line width={0.5}, solid}, 
%axis y line*={left}, 
%y axis line style={color={rgb,1:red,0.0;green,0.0;blue,0.0}, draw opacity={1.0}, line width={1}, solid}, 
colorbar={false}
]

\addplot[color=unia-purple, draw opacity={1.0}, line width={1.5}, solid, mark=square, mark repeat=2,mark phase=2]
table[row sep={\\}]
{
    \\
    0.0 0.07000296681563047 \\
36.0 0.025222327832397175 \\
60.0 0.001066748436174278 \\
84.0 0.0003379265519915209 \\
112.0 7.614973087021073e-5 \\
140.0 3.0162737758155897e-5 \\
165.0 7.194624412576693e-6 \\
191.0 1.110659943256904e-6 \\
216.0 1.1953473416072252e-7 \\
240.0 2.212524755767015e-8 \\
271.0 2.8408499624486333e-9 \\
};

\addplot[color=cpl1, draw opacity={1.0}, line width={1}, solid, mark=o, mark repeat=2,mark phase=2]
table[row sep={\\}]
{
    \\
    0.0 0.07000296681563047 \\
33.0 0.04513600631521441 \\
61.0 0.003120130347895161 \\
83.0 0.0004790301555918699 \\
106.0 0.00024161924537239034 \\
131.0 2.682119408275478e-5 \\
154.0 6.27879255157347e-6 \\
177.0 5.3727777437361545e-6 \\
201.0 5.198228375870221e-6 \\
221.0 1.427335547748887e-6 \\
241.0 4.2470299297959264e-7 \\
263.0 3.802338665981345e-8 \\
286.0 1.983546489346481e-8 \\
309.0 1.2073003724602435e-8 \\
331.0 8.34204272018569e-9 \\
};

    \addplot[color=cpl2, draw opacity={1.0}, line width={1}, solid, mark=|, mark repeat=2, mark phase=2]
table[row sep={\\}]
      {
    \\
    0.0 0.07000296681563047 \\
9.0 0.10338939827931397 \\
15.0 0.09568018323737128 \\
21.0 0.07173069570111118 \\
27.0 0.05526079985941775 \\
33.0 0.04774125929341334 \\
39.0 0.0385584201364543 \\
45.0 0.030658266616156303 \\
51.0 0.028287891299678247 \\
57.0 0.02551624163687259 \\
63.0 0.024645068215622977 \\
69.0 0.023435210632089885 \\
75.0 0.022769500656120715 \\
81.0 0.021708464521560177 \\
87.0 0.01944544990095817 \\
93.0 0.01726949416770217 \\
99.0 0.015276410592385917 \\
105.0 0.013768652857253549 \\
111.0 0.013100500287487582 \\
117.0 0.012879754345531918 \\
123.0 0.012563690752547577 \\
129.0 0.011956341595869262 \\
135.0 0.011240636009028518 \\
141.0 0.010664340435771427 \\
147.0 0.010322150302167989 \\
153.0 0.009625775102526487 \\
159.0 0.009051190385046546 \\
165.0 0.008226628656806304 \\
171.0 0.007381320202832916 \\
177.0 0.006508390374756113 \\
183.0 0.005759112729582509 \\
189.0 0.0051915999823872955 \\
195.0 0.004558447663149401 \\
201.0 0.0040357583514895375 \\
207.0 0.0036901070200236457 \\
213.0 0.0033084553160707434 \\
219.0 0.0030696973316451455 \\
225.0 0.0028934088318896783 \\
231.0 0.002690826202277596 \\
237.0 0.0025062213407462805 \\
243.0 0.0023903226288457405 \\
249.0 0.0022580528604272234 \\
255.0 0.001994003305058301 \\
261.0 0.0017513448503942417 \\
267.0 0.001573078768371541 \\
273.0 0.0014372593227758507 \\
279.0 0.0013379894013392166 \\
285.0 0.0012299554090857855 \\
291.0 0.0011377531743433326 \\
297.0 0.0010438134411413231 \\
303.0 0.0009536095380278964 \\
309.0 0.0008644025710301415 \\
315.0 0.0007849667581718376 \\
321.0 0.000729240833790316 \\
327.0 0.0006569027744637063 \\
333.0 0.000620512338875396 \\
339.0 0.0005800561605557 \\
345.0 0.0005515549150921965 \\
351.0 0.0005212421838933913 \\
357.0 0.0004973641813835729 \\
363.0 0.000502958980748215 \\
369.0 0.0004888247902383571 \\
375.0 0.00047963993636104374 \\
381.0 0.00048220556079589543 \\
387.0 0.0004650918251180437 \\
393.0 0.00045409847003410555 \\
399.0 0.00047200355036010496 \\
405.0 0.0004887775048912001 \\
411.0 0.000523341768908648 \\
417.0 0.0005347899472286706 \\
423.0 0.0005587371431846693 \\
429.0 0.0005854513553119494 \\
435.0 0.000607992921416189 \\
441.0 0.0006098073496057505 \\
447.0 0.000624449903758525 \\
453.0 0.0006354376033621258 \\
459.0 0.0006580912500037923 \\
465.0 0.0007149233576171275 \\
471.0 0.0007515348551311185 \\
477.0 0.0007682078343547905 \\
483.0 0.0008054951984012285 \\
489.0 0.0008274343853036825 \\
495.0 0.0008196077387927248 \\
501.0 0.0007842870273372841 \\
507.0 0.0007135311600999446 \\
513.0 0.0006341080338987864 \\
519.0 0.0005907887010358888 \\
525.0 0.0005629322899463482 \\
531.0 0.0005454121615644804 \\
537.0 0.0005144460006779876 \\
543.0 0.0004769794649274936 \\
549.0 0.0004380241827068045 \\
555.0 0.00041245998202327666 \\
561.0 0.0003854918146005144 \\
567.0 0.0003620462998249835 \\
573.0 0.00033190408070762754 \\
579.0 0.0003048978330550381 \\
585.0 0.00027873719719727623 \\
591.0 0.0002557178109524686 \\
597.0 0.00023869573483783482 \\
603.0 0.00023390008995705878 \\
};

    \addplot[color=cpl3, draw opacity={1.0}, line width={1}, solid, mark=diamond, mark repeat=2, mark phase=2]
table[row sep={\\}]
{
    \\
    0.0 0.07000296681563047 \\
38.0 0.04176287642392045 \\
48.5 0.05298818744782466 \\
65.0 0.00939403913998692 \\
80.5 0.008221196264033979 \\
99.25 0.00039080152680717273 \\
115.75 7.95277562181964e-5 \\
133.75 8.354925487958839e-6 \\
150.0 1.0226485599333353e-6 \\
167.75 8.099338846952489e-8 \\
184.75 3.175480430858788e-8 \\
201.75 1.9125879682153224e-9 \\
219.25 7.197061122621676e-10 \\
};

    \addplot[color=cpl4, draw opacity={1.0}, line width={1}, solid, mark=asterisk, mark repeat=2, mark phase=2]
table[row sep={\\}]
{
    \\0.0 0.07000296681563047 \\
12.0 0.029759963614647906 \\
18.0 0.026942415175809607 \\
24.0 0.012706078949551811 \\
30.0 0.006552589310215914 \\
36.0 0.0033242105127793647 \\
42.0 0.0022509461329942404 \\
48.0 0.001201463984899357 \\
54.0 0.0006290088009380205 \\
63.0 0.0004882928932446588 \\
72.0 0.0005424312627564232 \\
81.0 0.0007308203862130496 \\
90.0 0.0008088683249002766 \\
99.0 0.0005083013673964215 \\
108.0 0.00025000773436973745 \\
117.0 0.00023132900089011934 \\
126.0 0.00019262111910996793 \\
135.0 0.0001297761055415724 \\
141.0 8.428882926342223e-5 \\
147.0 5.6406963254074596e-5 \\
153.0 3.109220954531755e-5 \\
159.0 1.7590814851474398e-5 \\
165.0 7.694522317772446e-6 \\
171.0 3.6350551263051523e-6 \\
177.0 1.9370022535695948e-6 \\
183.0 1.1305461507220519e-6 \\
189.0 7.645438219982155e-7 \\
195.0 4.487475735904333e-7 \\
201.0 2.1563137715693183e-7 \\
210.0 1.5953395981834576e-7 \\
219.0 1.5147471525857652e-7 \\
228.0 1.8969176155485392e-7 \\
237.0 1.9495361561110883e-7 \\
246.0 1.9073890909670553e-7 \\
255.0 1.3507306086596036e-7 \\
264.0 1.0101305613219568e-7 \\
273.0 8.571911863942167e-8 \\
282.0 6.689554606536155e-8 \\
288.0 4.8278642577977574e-8 \\
};
\end{axis}

         \input{figs/GaAs-plt3}
   \end{tikzpicture}
    \caption{GaAs model. EARCG, EARGD are slightly worse than SCF and are superior to that of LBFGS and L2RCG.
    }
    \label{fig:plots-GaAs}\vspace*{5mm}
\end{subfigure}
\begin{subfigure}{\textwidth}
\centering
    \begin{tikzpicture}[/tikz/background rectangle/.style={fill={rgb,1:red,1.0;green,1.0;blue,1.0}, fill opacity={1.0}, draw opacity={1.0}}, show background rectangle]
         \input{figs/TiO2-plt1}
        \input{figs/TiO2-plt2}
         \input{figs/TiO2-plt3}
   \end{tikzpicture}
    \caption{TiO$_2$ model. EARCG outperforms the other methods with respect to the iteration number and CPU time, while LBFGS shows a similar  performance as EARCG in terms of Hamiltonian applications. Here, the superiority of EARCG over EARGD is the largest among the models under consideration.}
    \label{fig:plots-TiO2}
\end{subfigure}
\caption{Performance comparison of the different methods in terms of iterations, Hamiltonian operator applications and CPU time for three different models.}
\end{figure}

To evaluate the performance of the methods, we consider two different measures, the CPU time and the number of applications of the Hamiltonian operator relative to the Frobenius norm of the residual. The number of Hamiltonian applications is easy to count and not affected by hardware, implementation details, and background processes.
Table~\ref{tab:ham_call_percentage} shows the percentage of total runtime caused by Hamiltonian applications in our experiments. To make the comparison more accurate, we do not count the calls of \texttt{DftHamiltonian\_multiplication} directly, but normalize them with respect to the number of bands, since SCF often uses more bands than the other methods. One can see that the percentages for EARCG, SCF, and L2RCG are mostly of the same order of magnitude, while those for LBFGS are slightly lower.
In addition to these usual performance measures, we also include the number of (outer) iterations of the methods for illustrative purposes. 

\begin{table}[H]
\begin{center}
\begin{tabular}{||c | c c c c||} 
 \hline
   & EARCG & L2RCG & SCF & LBFGS \\ [0.5ex] 
 \hline\hline
 Silicon & 59.7\% &56.3\% & 56.5\% & 29.0 \% \\
 GaAs & 49.4 \% & 37.4\% & 48.8\% & 17.5\% \\
 TiO$_2$  & 81.5\% &  70.6\% & 68.2\% & 38.0\% \\ [1ex] 
 \hline
\end{tabular}
\end{center}
    \caption{Percentage of overall runtime cost caused by \texttt{DftHamiltonian\_multiplication}.}
    \label{tab:ham_call_percentage}
\end{table}

The Figures~\ref{fig:plots-silicon}, \ref{fig:plots-GaAs} and \ref{fig:plots-TiO2} show the results of the performance test of the methods for the different model solids. The $y$ axis always represents the Frobenius norm of the residual $R^{(m)}=H_{\varPhi^{(m)}}\varPhi^{(m)}-\varPhi^{(m)}\big((\varPhi^{(m)})^*H_{\varPhi^{(m)}}\varPhi^{(m)}\big)$ at iteration $m$, while the $x$-axes represent $m$ (left plots), the number of Hamiltonian applications after~$m$ steps (middle plots), and the CPU time spent after~$m$ steps (right plots). 

In terms of Hamiltonian applications, we observe that EARCG performs at least as well as SCF, EARGD, and LBFGS, and is much more efficient than L2RCG without preconditioning. In terms of CPU time, EARCG, EARGD, and SCF all perform similarly, while LBFGS is significantly worse due to the overhead caused by the number of steps and the expensive step size calculation.  It is worth noting that our DFTK implementations of the energy-adaptive EARCG and EARGD methods are faster than the DFTK built-in optimized SCF variant for all test cases in the low-accuracy regime. 

For the silicon model, the two energy-adaptive methods behave similarly and significantly outperform the other methods for all measures in all accuracy regimes. For the GaAs model, EARCG has a minimal edge over EARGD, and both perform slightly worse than SCF at high accuracy $10^{-8}$, outperforming LBFGS and L2RCG significantly. The performance gain of EARCG over EARGD is most pronounced for the TiO$_2$ model, where EARCG outperforms EARGD by more than 10\% for all measures.

\section{Conclusion}\label{sec:concl}
In this paper, we extended the energy-adaptive Riemannian gradient descent (EARGD) approach of \cite{AltPS22} to a conjugate gradient scheme, thereby developing the energy-adaptive Riemannian conjugate gradient (EARCG) method. We also discussed strategies for shifting the Hamiltonian and solving the resulting shifted system. In experiments on small lattice solid-state systems, our method shows superior performance compared to the $L^2$-gradient RCG method without preconditioning, is competitive with state-of-the-art SCF-based methods, and demonstrates particularly promising gains in small-gap systems compared to other Riemannian methods. Furthermore, EARCG is an improvement over its gradient descent counterpart, with the performance difference becoming more pronounced for larger models. 

We attribute the improved performance to the method's dual exploitation of the manifold structure of $\St$ and the specific structure of Kohn-Sham eigenvalue problems through the use of a gap-aware Riemannian metric based on the shifted Hamiltonian. In particular, the improvements introduced here also benefit the energy-adaptive Riemannian gradient descent approach.

Looking ahead, several directions seem promising. These include formulating RCG methods for alternative metrics, such as the simpler $H^1$-metric, and extending energy-adaptive Riemannian methods to LBFGS or other advanced gradient-based algorithms, which could further improve the performance of Riemannian optimization for Kohn-Sham problems. In addition, improving the efficiency of energy-adaptive Riemannian gradient computations and integrating widely used SCF heuristics, such as potential mixing and adaptive band strategies, could further reduce the computational cost for larger and more complex systems. The energy-adaptive framework also has great potential in interaction with adaptive spatial discretization \cite{HSW21}, which may be utilizable by EARCG. Finally, the incorporation of fractional orbital occupancy would allow the application of EARCG to metallic systems and non-zero temperature scenarios, although its integration into a Riemannian framework remains challenging.

\bigskip\noindent
\textbf{Acknowledgments} The authors thank Michael Herbst and Gaspard Kemlin for fruitful and constructive discussions on mathematical aspects, as well as algorithmic and implementational details.

\bibliography{bibliography.bib}

\begin{thebibliography}{10}

\bibitem{Absil2007-ab}
P.-A. Absil, R.~Mahony, and R~Sepulchre.
\newblock {\em Optimization algorithms on matrix manifolds}.
\newblock Princeton University Press, Princeton, NJ, 2007.

\bibitem{AliHYY24}
Y.~Ai, P.~Henning, M.~Yadav, and S.~Yuan.
\newblock Riemannian conjugate {S}obolev gradients and their application to
  compute ground states of {BEC}s.
\newblock {\em ArXiv e-preprint 2409.17302 [math.NA]}, 2024.

\bibitem{AltHPS24}
R.~Altmann, M.~Hermann, D.~Peterseim, and T.~Stykel.
\newblock Riemannian optimization methods for ground states of multicomponent
  {B}ose--{E}instein condensates.
\newblock {\em ArXiv e-preprint 2411.09617 [math.NA]}, 2024.

\bibitem{AltPS22}
R.~Altmann, D.~Peterseim, and T.~Stykel.
\newblock Energy-adaptive {R}iemannian optimization on the {S}tiefel manifold.
\newblock {\em ESAIM: Math. Model. Numer. Anal.}, 56(5):1629--1653, 2022.

\bibitem{AltPS24}
R.~Altmann, D.~Peterseim, and T.~Stykel.
\newblock {R}iemannian {N}ewton methods for energy minimization problems of
  {K}ohn--{S}ham type.
\newblock {\em J. Sci. Comput.}, 101:article 6, 2024.

\bibitem{Anantharaman2009}
A.~Anantharaman and E.~Canc{\`e}s.
\newblock Existence of minimizers for {K}ohn--{S}ham models in quantum
  chemistry.
\newblock {\em Ann. Inst. H. Poincaré Anal. Non Linéaire}, 26(6):2425--2455,
  2009.

\bibitem{BaoC13}
W.~Bao and Y.~Cai.
\newblock Mathematical theory and numerical methods for {B}ose--{E}instein
  condensation.
\newblock {\em Kinet. Relat. Mod.}, 6(1):1--135, 2013.

\bibitem{BaoC18}
W.~Bao and Y.~Cai.
\newblock Mathematical models and numerical methods for spinor
  {B}ose--{E}instein condensates.
\newblock {\em Commun. Comput. Phys.}, 24(4):899--965, 2018.

\bibitem{Bloch1929}
F.~Bloch.
\newblock {{\"U}ber die Quantenmechanik der Elektronen in Kristallgittern}.
\newblock {\em Z. Phys.}, 52(7):555--600, 1929.

\bibitem{CDK21}
E.~Canc{\`e}s, G.~Dusson, G.~Kemlin, and A.~Levitt.
\newblock Practical error bounds for properties in plane-wave electronic
  structure calculations.
\newblock {\em SIAM J. Sci. Comput.}, 44(5):B1312--B1340, 2022.

\bibitem{CKL22}
E.~Canc\`{e}s, G.~Kemlin, and A.~Levitt.
\newblock Convergence analysis of direct minimization and self-consistent
  iterations.
\newblock {\em SIAM J. Matrix Anal. Appl.}, 42(1):243--274, 2021.

\bibitem{Cancs2000}
E.~Canc{\`e}s and C.~Le~Bris.
\newblock Can we outperform the {DIIS} approach for electronic structure
  calculations?
\newblock {\em Int. J. Quantum Chem.}, 79(2):82–90, 2000.

\bibitem{DdGYZ23}
X.~Dai, S.~de~Gironcoli, B.~Yang, and A.~Zhou.
\newblock Mathematical analysis and numerical approximations of density
  functional theory models for metallic systems.
\newblock {\em Multiscale Model. Simul.}, 21(3):777--803, 2023.

\bibitem{DaiLZZ17}
X.~Dai, Z.~Liu, L.~Zhang, and A.~Zhou.
\newblock A conjugate gradient method for electronic structure calculations.
\newblock {\em SIAM J. Sci. Comput.}, 39(6):A2702--A2740, 2017.

\bibitem{EAS99}
A.~Edelman, T.~A. Arias, and S.~T. Smith.
\newblock The geometry of algorithms with orthogonality constraints.
\newblock {\em SIAM J. Matrix Anal. Appl.}, 20(2):303--353, 1998.

\bibitem{FR}
R.~Fletcher and C.~M. Reeves.
\newblock Function minimization by conjugate gradients.
\newblock {\em Comput. J.}, 7(2):149--154, 1964.

\bibitem{GaoPY23}
B.~Gao, R.~Peng, and Y.~Yuan.
\newblock Optimization on product manifolds under a preconditioned metric.
\newblock {\em ArXiv e-preprint 2306.08873 [math.OC]}, 2023.

\bibitem{GTH96}
S.~Goedecker, M.~Teter, and J.~Hutter.
\newblock Separable dual-space {G}aussian pseudopotentials.
\newblock {\em Phys. Rev. B}, 54:1703--1710, 1996.

\bibitem{HZBacktrack}
W.W. Hager and H.~Zhang.
\newblock A new conjugate gradient method with guaranteed descent and an
  efficient line search.
\newblock {\em SIAM J. Optim.}, 16(1):170--192, 2005.

\bibitem{HagZ06}
W.W. Hager and H.~Zhang.
\newblock A survey of nonlinear conjugate gradient method.
\newblock {\em Pacific J. Optim.}, 2:35--58, 2006.

\bibitem{Hasnip2014}
P.~J. Hasnip, K.~Refson, M.~I.~J. Probert, J.~R. Yates, S.~J. Clark, and C.~J.
  Pickard.
\newblock Density functional theory in the solid state.
\newblock {\em Philos. Trans. Roy. Soc. A}, 372(2011):20130270, 2014.

\bibitem{HSW21}
P~Heid, B.~Stamm, and T.~P. Wihler.
\newblock Gradient flow finite element discretizations with energy-based
  adaptivity for the gross-pitaevskii equation.
\newblock {\em J. Comput. Phys.}, 436:110165, 2021.

\bibitem{HenP20}
P.~Henning and D.~Peterseim.
\newblock Sobolev gradient flow for the {G}ross--{P}itaevskii eigenvalue
  problem: global convergence and computational efficiency.
\newblock {\em SIAM J. Numer. Anal.}, 58(3):1744--1772, 2020.

\bibitem{DFTKjcon}
M.~F. Herbst, A.~Levitt, and E.~Canc{\`e}s.
\newblock {DFTK}: {A} {J}ulian approach for simulating electrons in solids.
\newblock {\em Proc. JuliaCon Conf.}, 3(26):69, 2021.

\bibitem{Hu1991}
Y.~F. Hu and C.~Storey.
\newblock Global convergence result for conjugate gradient methods.
\newblock {\em J. Optim. Theory Appl.}, 71(2):399--405, 1991.

\bibitem{KohS65}
W.~Kohn and L.~J. Sham.
\newblock Self-consistent equations including exchange and correlation effects.
\newblock {\em Phys. Rev.}, 140:A1133--A1138, 1965.

\bibitem{Kresse1996}
G.~Kresse and J.~Furthm\"{u}ller.
\newblock Efficient iterative schemes for ab initio total-energy calculations
  using a plane-wave basis set.
\newblock {\em Phys. Rev. B}, 54(16):11169–11186, 1996.

\bibitem{Lev20}
A.~Levitt.
\newblock {\em Mathematical and numerical analysis of models of
  condensed-matter physics}.
\newblock Habilitation thesis, {Universit{\'e} Paris-Est}, 2020.

\bibitem{LuoWR24}
K.~Luo, T.~Wang, and X.~Ren.
\newblock Conjugate gradient direct minimization on the complex {S}tiefel
  manifold in {K}ohn--{S}ham density functional theory for finite and extended
  systems.
\newblock {\em ArXiv e-preprint 2412.18807 [math.OC]}, 2024.

\bibitem{Marks2008}
L.~D. Marks and D.~R. Luke.
\newblock Robust mixing for ab initio quantum mechanical calculations.
\newblock {\em Phys. Rev. B}, 78(7):075114, 2008.

\bibitem{MisS16}
B.~Mishra and R.~Sepulchre.
\newblock Riemannian preconditioning.
\newblock {\em SIAM J. Optim.}, 26(1):635--660, 2016.

\bibitem{Optim}
P.~K. Mogensen and A.~N. Riseth.
\newblock Optim: A mathematical optimization package for {J}ulia.
\newblock {\em J. Open Source Software}, 3(24):615, 2018.

\bibitem{Nocedal06}
J.~Nocedal and S.~Wright.
\newblock {\em Numerical Optimization}.
\newblock Springer Series in Operations Research and Financial Engineering.
  Springer, New York, NY, 2 edition, 2006.

\bibitem{Oviedo2018}
H.~Oviedo and H.~Lara.
\newblock A {R}iemannian conjugate gradient algorithm with implicit vector
  transport for optimization on the {S}tiefel manifold.
\newblock Technical Report~1, Mathematics Research Center, CIMAT A.C.
  Guanajuato, 2018.

\bibitem{PaTa13}
J.~M. Papakonstantinou and R.~A. Tapia.
\newblock Origin and evolution of the secant method in one dimension.
\newblock {\em Am. Math. Mon.}, 120(6):500--517, 2013.

\bibitem{PBE96}
J.~P. Perdew, K.~Burke, and M.~Ernzerhof.
\newblock Generalized gradient approximation made simple.
\newblock {\em Phys. Rev. Lett.}, 77:3865--3868, 1996.

\bibitem{PolakRibiere}
E.~Polak and G.~Ribiere.
\newblock Note sur la convergence de méthodes de directions conjuguées.
\newblock {\em R.I.R.O.}, 3(16):35--43, 1969.

\bibitem{POLYAK196994}
B.T. Polyak.
\newblock The conjugate gradient method in extremal problems.
\newblock {\em USSR Comput. Math. Math. Phys.}, 9(4):94--112, 1969.

\bibitem{RiWi2012}
W.~Ring and B.~Wirth.
\newblock Optimization methods on {R}iemannian manifolds and their application
  to shape space.
\newblock {\em SIAM J. Optim.}, 22(2):596--627, 2012.

\bibitem{Robbe2002}
M.~Robb{\'e} and M.~Sadkane.
\newblock A convergence analysis of {GMRES} and {FOM} methods for {S}ylvester
  equations.
\newblock {\em Numer. Algorithms}, 30(1):71--89, 2002.

\bibitem{Roothaan1951}
C.~C.~J. Roothaan.
\newblock New developments in molecular orbital theory.
\newblock {\em Rev. Modern Phys.}, 23(2):69–89, 1951.

\bibitem{SaIi21}
H.~Sakai and H.~Iiduka.
\newblock Sufficient descent {R}iemannian conjugate gradient methods.
\newblock {\em J. Optim. Theory Appl.}, 190(1):130–150, 2021.

\bibitem{SAKAI2023127685}
H.~Sakai, H.~Sato, and H.~Iiduka.
\newblock Global convergence of {H}ager–{Z}hang type {R}iemannian conjugate
  gradient method.
\newblock {\em Appl. Math. Comput.}, 441:127685, 2023.

\bibitem{SchRNB09}
R.~Schneider, T.~Rohwedder, A.~Neelov, and J.~Blauert.
\newblock Direct minimization for calculating invariant subspaces in density
  functional computations of the electronic structure.
\newblock {\em J. Comput. Math.}, 27(2-3):360--387, 2009.

\bibitem{Simoncini1996}
V.~Simoncini.
\newblock On the numerical solution of {$AX-XB=C$}.
\newblock {\em BIT}, 36(4):814--830, 1996.

\bibitem{TPA89}
M.~P. Teter, C.~Payne, M, and D.~C. Allan.
\newblock Solution of {S}chr\"odinger's equation for large systems.
\newblock {\em Phys. Rev. B}, 40:12255--12263, 1989.

\bibitem{TiaCWW20}
T.~Tian, Y.~Cai, X.~Wu, and Z.~Wen.
\newblock Ground states of spin-{$F$} {B}ose--{E}instein condensates.
\newblock {\em SIAM J. Sci. Comput.}, 42(4):B983--B1013, 2020.

\bibitem{Tou23}
J.~Toulouse.
\newblock Review of approximations for the exchange-correlation energy in
  density-functional theory.
\newblock In {\em Density Functional Theory: Modeling, Mathematical Analysis,
  Computational Methods, and Applications}, pages 1--90. Springer International
  Publishing, Cham, 2023.

\bibitem{VidNLC24}
L.~Vidal, T.~Nottoli, F.~Lipparini, and E.~Canc\'es.
\newblock Geometric optimization of restricted-open and complete active space
  self-consistent field wavefunctions.
\newblock {\em J. Phys. Chem. A}, 128(31):6601--6612, 2024.

\bibitem{wolfe}
P.~Wolfe.
\newblock Convergence conditions for ascent methods.
\newblock {\em SIAM Rev.}, 11(2):226--235, 1969.

\bibitem{CiCP-18-167}
Y.~Zhou, J.~R. Chelikowsky, X.~Gao, and A.~Zhou.
\newblock On the "preconditioning" function used in planewave {DFT}
  calculations and its generalization.
\newblock {\em Commun. Comput. Phys.}, 18(1):167--179, 2018.

\bibitem{Zhu2016}
X.~Zhu.
\newblock A {R}iemannian conjugate gradient method for optimization on the
  {S}tiefel manifold.
\newblock {\em Comput. Optim. Appl.}, 67(1):73–110, 2016.

\bibitem{ZhuS20}
X.~Zhu and H.~Sato.
\newblock {R}iemannian conjugate gradient methods with inverse retractions.
\newblock {\em Comput. Optim. Appl.}, 77:779--810, 2020.

\bibitem{zoutendijk}
G.~Zoutendijk.
\newblock Nonlinear programming: A numerical survey.
\newblock {\em SIAM J. Control}, 4(1):194--210, 1966.

\end{thebibliography}

%\pagebreak

\appendix

\section{Preconditioned block FOM}\label{app:FOM}
For the sake of completeness and reproducibility, we present the preconditioned block FOM to solve the Sylvester equation~\eqref{eq:Sylv}, which is a~block Krylov subspace method \cite{Simoncini1996}.
The algorithm, a preconditioned version of \cite[Algorithm~3.3]{Robbe2002}, is given in Algorithm~\ref{alg:pfom}.

\begin{algorithm2e}[th]
\SetKwInOut{Input}{input}
\SetAlgoLined
\Input{$\varPhi \in \Field^{n \times \pp}$, $H_\varPhi \in \Herm(n)$, $\Sigma_{\varPhi,\mu} \in \Herm(\pp)$, 
initial guess $X_0 \in \Field^{n \times \pp}$,
preconditioner $P^{-1}: \Field^{n\times \pp} \to \Field^{n \times \pp}$}
\BlankLine

$C = \varPhi - (H_\varPhi X_0 - X_0\Sigma_{\varPhi,\mu})$\; 
$R_0 = P^{-1} C$\;
$ V_1 \in \Field^{n \times q_1}$, $q_1=\tilde q_1 \le p$, is the unitary factor 
of the rank-revealing QR decomposition of $R_0$\;
$W_1 = H_\varPhi V_1$\;
$A_1 =  V_1^\ast  W_1^{}$\;
$C_1 = V_1^\ast  C$\;
 \For{$j = 1, 2, \dots$ until convergence}{
   solve $A_jY_j - Y_j \Sigma_{\varPhi,\mu} = C_j$ for $Y_j \in \Field^{q_j \times \pp}$\;
  $R_j = (I - V_j V_j^\ast )P^{-1}(C - (W_jY_j - V_j Y_j\Sigma_{\varPhi,\mu}))$\;
    $\tilde V_{j+1} \in \Field^{n \times \tilde q_{j+1}}$, $\tilde q_{j+1} \le p$, is the unitary factor of the rank-revealing QR decomposition of $R_j$\;

  $V_{j+1} = \begin{bmatrix}
      V_{j} & \tilde V_{j+1}
  \end{bmatrix}$\;
  $W_{j+1} = \begin{bmatrix}
      W_{j} & H_\varPhi \tilde V_{j+1}^{}
    \end{bmatrix}$\;
      $H_{j, j+1} = V_j^\ast H_\varPhi\tilde V_{j+1}$\;
  $H_{j+1,j+1} = \tilde V_{j+1}^\ast H_\varPhi\tilde V_{j+1}$\;
  $A_{j+1} = \begin{bmatrix}
      A_{j} & H_{j, j+1} \\
 H_{j,j+1}^\ast 
 & H_{j+1,j+1}
  \end{bmatrix}\in\Herm(q_{j+1})$ with $q_{j+1} = \sum\limits_{i=1}^{j+1} \tilde q_i$ rows\;
  $C_{j+1} = \begin{bmatrix}
      C_j \\ \tilde V_j^\ast  C 
  \end{bmatrix}\in\Field^{q_{j+1}\times \pp}$ 
 }
 \KwRet{$X_j=X_0 + V_j Y_j$}
 \caption{Preconditioned block FOM for the Sylvester equation}
     \label{alg:pfom}
\end{algorithm2e}

\end{document}